\title{K\"ahler currents and null loci}
\author[T.C. Collins]{Tristan C. Collins}
\address{Department of Mathematics, Harvard University, 1 Oxford Street, Cambridge, MA 02138}
  \email{tcollins@math.harvard.edu}
\author[V. Tosatti]{Valentino Tosatti$^{*}$}
\thanks{$^{*}$Supported in part by a Sloan Research Fellowship and NSF grants DMS-1236969 and DMS-1308988.}
 \address{Department of Mathematics, Northwestern University, 2033 Sheridan Road, Evanston, IL 60201}
  \email{tosatti@math.northwestern.edu}
  \dedicatory{Dedicated to D.H. Phong with admiration on the occasion of his 60th birthday.}
\theoremstyle{plain}
\newtheorem{thm}{Theorem}[section]
\newtheorem{prop}[thm]{Proposition}
\newtheorem{defn}[thm]{Definition}
\newtheorem{lem}[thm]{Lemma}
\newtheorem{cor}[thm]{Corollary}
\newtheorem{que}[thm]{Question}
\theoremstyle{definition}
\numberwithin{equation}{section}
\newcommand{\amp}{\textnormal{Amp}}
\newcommand{\Null}{\textnormal{Null}}
\newcommand{\del}{\partial}
\newcommand{\de}{\partial}
\newcommand{\dbar}{\overline{\del}}
\newcommand{\ddb}{i\del\dbar}
\newcommand{\ddt}{\frac{\del}{\del t}}
\newcommand{\ve}{\varepsilon}
\newcommand{\vp}{\varphi}
\newcommand{\ti}[1]{\tilde{#1}}
\newcommand{\tr}[2]{\mathrm{tr}_{#1}{#2}}
\newcommand{\Ric}{\mathrm{Ric}}
\renewcommand{\leq}{\leqslant}
\renewcommand{\geq}{\geqslant}
\renewcommand{\le}{\leqslant}
\renewcommand{\epsilon}{\varepsilon}
\renewcommand{\phi}{\varphi}
\newcommand{\ov}[1]{\overline{#1}}
\begin{document}
\begin{abstract}
We prove that the non-K\"ahler locus of a nef and big class on a compact complex manifold bimeromorphic to a K\"ahler manifold equals its null locus. In particular this gives
an analytic proof of a theorem of Nakamaye and Ein-Lazarsfeld-Musta\c{t}\u{a}-Nakamaye-Popa. As an application, we show that finite time non-collapsing singularities of the K\"ahler-Ricci flow on compact K\"ahler manifolds always form along analytic subvarieties, thus answering a question of Feldman-Ilmanen-Knopf and Campana. We also extend the second author's results about noncollapsing degenerations of Ricci-flat K\"ahler metrics on Calabi-Yau manifolds to the nonalgebraic case.
\end{abstract}
\maketitle
\section{Introduction}
The general topic of this paper is the study of the boundary of the K\"ahler cone of a compact K\"ahler manifold, and our goal is to understand how much cohomology classes on the boundary of the K\"ahler cone deviate from being K\"ahler.
Let $(X,\omega)$ be an $n$-dimensional compact K\"ahler manifold. If $\alpha$ is a closed real $(1,1)$ form on $X$, then we denote by $[\alpha]$ its cohomology class in $H^{1,1}(X,\mathbb{R})$.
The set of cohomology classes of K\"ahler metrics is the K\"ahler cone $\mathcal{K}\subset H^{1,1}(X,\mathbb{R})$, which is an open convex cone. Classes in its closure $\ov{\mathcal{K}}$ are called nef (or numerically effective). A class $[\alpha]$ is said to have positive self-intersection if $\int_X\alpha^n>0$. Cohomology classes which are nef and with positive self-intersection are quite close to being K\"ahler. In fact, thanks to a fundamental result of Demailly-P\u{a}un \cite{DP}, such classes contain a K\"ahler current, which roughly speaking is a singular K\"ahler metric (see Section \ref{sectprel} for definitions). As a result, we will also refer to them as nef and big classes. Furthermore, thanks to Demailly's regularization theorem \cite{Dem92}, the K\"ahler current can be chosen to have analytic singularities, which in particular implies that it is a smooth K\"ahler metric on a Zariski open subset of $X$. The largest possible subset of $X$ that we can obtain in this way is called the ample locus of $[\alpha]$, denoted by $\amp(\alpha)$. Its complement $E_{nK}(\alpha)=X\backslash \amp(\alpha)$ is called the non-K\"ahler locus of $[\alpha]$, and it is a proper analytic subvariety of $X$. In a sense $E_{nK}(\alpha)$ measures the deviation from $[\alpha]$ to being K\"ahler, since
$E_{nK}(\alpha)=\emptyset$ if and only if $[\alpha]$ is K\"ahler. Ample and non-K\"ahler loci were introduced by Boucksom \cite{Bou}, and they have been much studied recently in connection with the regularity for complex Monge-Amp\`ere equations in big cohomology classes \cite{BD, BEGZ, D6}.

Given a nef and big class $[\alpha]$, there is another subset of $X$ which measures its non-K\"ahlerity, namely the null locus
$$\Null(\alpha)=\bigcup_{\int_V \alpha^{\dim V}=0}V,$$
where the union is over all positive dimensional irreducible analytic subvarieties $V\subset X$ where $[\alpha]$ integrates to zero.
Note that since $\int_X\alpha^n>0$, all such subvarieties are proper subsets of $X$. The null locus was explicitly first considered by Nakamaye \cite{Nak}, although it was probably studied earlier. Our main theorem is the following:

\begin{thm}\label{main}
Let $X$ be a compact complex manifold which is bimeromorphic to a K\"ahler manifold, let $\alpha$ be a closed smooth real $(1,1)$-form on $X$ whose class $[\alpha]\in H^{1,1}(X,\mathbb{R})$ is nef with $\int_X \alpha^n>0$. Then
\begin{equation}
E_{nK}(\alpha) = \Null(\alpha).
\end{equation}
\end{thm}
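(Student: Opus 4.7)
The proof splits into two inclusions. The direction $\Null(\alpha)\subseteq E_{nK}(\alpha)$ is the easier one, and I would prove its contrapositive: if $V\subset X$ is an irreducible positive-dimensional analytic subvariety not contained in $E_{nK}(\alpha)$, then $\int_V \alpha^{\dim V}>0$. Indeed, since $V$ meets $\amp(\alpha)$, the restriction to $V$ of any K\"ahler current $T\in[\alpha]$ with analytic singularities inside $E_{nK}(\alpha)$ (which exists by Demailly's regularization combined with Boucksom's results) is a well-defined closed positive $(1,1)$-current, strictly positive on the nonempty open subset $V\cap \amp(\alpha)$. Pulling back to a resolution of singularities $\nu:\ov V\to V$ gives a K\"ahler current in the nef class $\nu^*[\alpha|_V]$, and Demailly--P\u{a}un then forces $\int_V\alpha^{\dim V}=\int_{\ov V}(\nu^*\alpha)^{\dim V}>0$.

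The hard inclusion $E_{nK}(\alpha)\subseteq \Null(\alpha)$ is the main content of the theorem. The plan is to prove its contrapositive: for every $x\in X\setminus \Null(\alpha)$, produce a K\"ahler current in $[\alpha]$ smooth on a neighborhood of $x$, thereby placing $x$ in $\amp(\alpha)$. My approach is induction on $n=\dim X$, the base case $n=1$ being trivial since a nef class of positive degree on a curve is K\"ahler. For the inductive step, I would begin with a Demailly regularization $T\in[\alpha]$ with analytic singularities along a proper subvariety $Z\subset X$, and perform a log resolution $\pi:\ti X\to X$ of the ideal sheaf of $Z$, chosen so that $\ti X$ is K\"ahler (possible by the bimeromorphic-to-K\"ahler hypothesis). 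Then $\pi^*T$ has purely divisorial singularities, yielding a decomposition $\pi^*[\alpha]=[\beta]+[D]$ with $D$ effective (supported on exceptional and proper-transform divisors) and $[\beta]$ nef. Since integrals over subvarieties are bimeromorphic invariants, $x\notin \Null(\alpha)$ implies no positive-dimensional component of $\pi^{-1}(x)$ lies in $\Null(\pi^*[\alpha])$. I would then restrict to each irreducible component $E_i\subset\mathrm{Supp}(D)$ meeting $\pi^{-1}(x)$ and apply the inductive hypothesis to obtain K\"ahler currents in $\pi^*[\alpha]|_{E_i}$ which are smooth near a preimage of $x$; gluing these and pushing forward would then produce the desired K\"ahler current on $X$.

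The main obstacle lies in this inductive step, namely in assembling the K\"ahler representatives on each $E_i$ into a single global K\"ahler current on $\ti X$ that is smooth near $\pi^{-1}(x)$, and in showing that the remaining singularity along $\mathrm{Supp}(D)$ can be absorbed by the negativity of the exceptional divisors. The natural approach is to introduce a small multiple $-\epsilon[F]$ of an exceptional class $[F]$ to reduce the mass along $D$, at the cost of perturbing $[\beta]$; keeping the modified class nef and big requires a sharp mass-concentration argument analogous to Demailly--P\u{a}un, where the hypothesis $x\notin\Null(\alpha)$ supplies the strict positivity of intersection numbers needed to close the estimate. The bimeromorphic-to-K\"ahler hypothesis is essential throughout: it guarantees that the modifications remain in the K\"ahler category, so that Demailly's regularization, the BEGZ theory of Monge--Amp\`ere equations in big classes, and the $C^\infty$ estimates on the ample locus all remain available at every stage of the induction.
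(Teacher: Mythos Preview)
Your approach to the easy inclusion $\Null(\alpha)\subseteq E_{nK}(\alpha)$ is essentially correct, though the final appeal to ``Demailly--P\u{a}un'' is misdirected: that theorem produces a K\"ahler current from nefness plus $\int\alpha^k>0$, whereas you need the converse implication (a nef class containing a K\"ahler current has positive top self-intersection). This follows from Boucksom's volume theory or, as the paper does it, from the monotonicity of non-pluripolar Monge--Amp\`ere masses in BEGZ. This is a minor point.

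Your plan for the hard inclusion has a genuine gap. After your log resolution $\pi:\ti X\to X$, every $\pi$-exceptional divisor $E_i$ maps onto a subvariety of strictly smaller dimension, so $(\pi^*\alpha)|_{E_i}$ is pulled back from a lower-dimensional base and hence $\int_{E_i}(\pi^*\alpha)^{n-1}=0$. Thus $\pi^*[\alpha]|_{E_i}$ is nef but \emph{not} big, and the induction hypothesis---which requires positive top self-intersection---simply does not apply on $E_i$. For the same reason your claim that ``no positive-dimensional component of $\pi^{-1}(x)$ lies in $\Null(\pi^*[\alpha])$'' is false: any positive-dimensional fiber component $W$ has $(\pi^*\alpha)|_W\equiv 0$, so $W\subset\Null(\pi^*\alpha)$ automatically, regardless of whether $x\in\Null(\alpha)$. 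Since the only interesting case is $x\in E_{nK}(\alpha)\setminus\Null(\alpha)$, where the resolution will typically have positive-dimensional exceptional fiber over $x$, this breaks the inductive scheme as stated. The vague ``mass-concentration'' fix you propose does not address this: the obstruction is not a matter of estimates but the structural fact that pulled-back classes are never big along exceptional loci.

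The paper avoids induction on $\dim X$ altogether (and indeed remarks that it bypasses the induction in Demailly--P\u{a}un). Instead it works directly on $X$: choose $V$ an irreducible component of $E_{nK}(\alpha)$ through the hypothetical point $x\in E_{nK}(\alpha)\setminus\Null(\alpha)$. Since $x\notin\Null(\alpha)$ one has $\int_V\alpha^{\dim V}>0$, so $[\alpha|_V]$ is nef and big on (a resolution of) $V$, and Demailly--P\u{a}un yields a K\"ahler current $T$ on $V$ with analytic singularities. The heart of the proof is then an \emph{extension theorem}: given such a $T$ on $V$ together with the ambient K\"ahler current $R\in[\alpha]$ singular exactly along $E_{nK}(\alpha)$, one builds a K\"ahler current $\ti T\in[\alpha]$ on all of $X$ whose restriction to $V$ is smooth at the generic point of $V$. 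This immediately contradicts $V\subset E_{nK}(\alpha)$. The extension is carried out by principalizing the ideal sheaf of the singularities of $T$ so that the local potentials have divisorial (simple normal crossings) poles, constructing local extensions of the potential in tubular charts around $V$ whose differences on overlaps are continuous, gluing these via Richberg's regularized-maximum technique, and finally patching the result with a small multiple of $R$ away from $V$. No descent to lower-dimensional ambient spaces is involved.
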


Recall that compact complex manifolds bimeromorphic to K\"ahler are said to be in Fujiki's class $\mathcal{C}$ \cite{Fu, Va}. Equivalently, $X$ is in class $\mathcal{C}$ if there is a modification $\mu:\ti{X}\to X$ with $\ti{X}$ K\"ahler.

In particular, Theorem \ref{main} proves that $\Null(\alpha)$ is a proper analytic subvariety of $X$, a fact which is far from clear from its definition (see \cite{Nak} for a proof of this in
the algebraic setting). Note that this theorem is trivially true if we drop the hypothesis that $\int_X \alpha^n>0$, in which case $E_{nK}(\alpha) = \Null(\alpha)=X$.

A simple example where this theorem applies is the following: let $(Y^n,\omega)$ be a compact K\"ahler manifold and $\pi:X\to Y$ be the blowup of a point $y\in Y$ with exceptional divisor
$E=\pi^{-1}(y)\cong \mathbb{CP}^{n-1}$. Let $[\alpha]=[\pi^*\omega]$, so $[\alpha]$ is nef and has positive self-intersection, with $\Null(\alpha)=E$. It is well-known that there exists a smooth metric $h$ on the line bundle $\mathcal{O}(E)$ with curvature $R(h)$ such that $\pi^*\omega-\ve R(h)$ is a K\"ahler metric on $X$ for all $\ve>0$ small. If $s$ is a section of $\mathcal{O}(E)$ that vanishes along $E$ to order $1$, then for any $\ve$ small $\pi^*\omega+\ve\ddb\log|s|^2_h$ is a K\"ahler current on $X$ in the class $\alpha$ singular precisely along $E$.

This theorem has several diverse applications, which we now describe. The most direct corollary is an analytic proof of the following theorem in algebraic geometry,
due to Nakamaye \cite{Nak} in the case of $\mathbb{Q}$-divisors and to Ein-Lazarsfeld-Musta\c{t}\u{a}-Nakamaye-Popa \cite[Corollary 5.6]{ELMNP} in general:

\begin{cor}[\cite{ELMNP, Nak}]\label{locus}
Let $X$ be an $n$-dimensional smooth projective variety over $\mathbb{C}$ and $D$ a nef $\mathbb{R}$-divisor on $X$ with $D^n>0$. Then the augmented base locus $\mathbb{B}_{+}(D)$ of $D$ satisfies $\mathbb{B}_{+}(D) = \Null(D)$.
\end{cor}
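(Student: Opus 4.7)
The plan is to deduce Corollary \ref{locus} from Theorem \ref{main} by translating each side of the claimed equality into its analytic counterpart. To set up, observe that since $X$ is smooth projective it is automatically K\"ahler, and hence lies in Fujiki's class $\mathcal{C}$. Fix a smooth closed real $(1,1)$-form $\alpha$ representing $c_1(D) \in H^{1,1}(X,\mathbb{R})$; the hypotheses that $D$ is nef and $D^n>0$ translate, via Kleiman's criterion and the identification of the nef cone of $\mathbb{R}$-divisors with the corresponding subcone of $H^{1,1}(X,\mathbb{R})$, into $[\alpha]$ being nef with $\int_X \alpha^n > 0$. Theorem \ref{main} then yields $E_{nK}(\alpha) = \Null(\alpha)$, and the corollary reduces to the two set-theoretic identifications $\Null(\alpha) = \Null(D)$ and $E_{nK}(\alpha) = \mathbb{B}_{+}(D)$.

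The first identification is immediate from intersection theory: for any positive-dimensional irreducible subvariety $V \subset X$ of dimension $k$ we have $\int_V \alpha^k = D^k \cdot V$, so the two unions defining $\Null(\alpha)$ and $\Null(D)$ consist of exactly the same subvarieties. The second identification, $E_{nK}(\alpha) = \mathbb{B}_{+}(D)$, is the heart of the reduction and is essentially due to Boucksom \cite{Bou}. One inclusion is the easy direction: given any algebraic decomposition $D = A + E$ with $A$ an ample $\mathbb{R}$-divisor and $E = \sum c_i E_i$ effective, the current $\omega_A + \sum c_i [E_i]$, where $\omega_A$ is a smooth K\"ahler representative of $A$, is a K\"ahler current in $[\alpha]$ with analytic singularities exactly along $\mathrm{Supp}(E)$, which shows $E_{nK}(\alpha) \subset \mathbb{B}_{+}(D)$. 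For the reverse inclusion I would apply Demailly's regularization theorem \cite{Dem92} to produce K\"ahler currents in $[\alpha]$ with analytic singularities contained in arbitrarily small neighbourhoods of $E_{nK}(\alpha)$, and then use Siu's decomposition to extract effective divisorial components, yielding algebraic decompositions that realize $\mathbb{B}_{+}(D) \subset E_{nK}(\alpha)$.

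The main obstacle is precisely this last identification in the $\mathbb{R}$-divisor case, since $\mathbb{B}_{+}(D)$ is defined via perturbations $D - \epsilon A$ by small ample $\mathbb{R}$-divisors, and one must verify the compatibility of the analytic constructions with such perturbations. I expect this step to be handled either directly on the cones of positive currents or by a density argument reducing to the $\mathbb{Q}$-divisor case, where the equality $E_{nK} = \mathbb{B}_{+}$ is classical. Once this matching is in place, the corollary will follow purely formally from Theorem \ref{main}.
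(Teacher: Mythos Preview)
Your overall reduction is exactly the paper's: Corollary~\ref{locus} follows from Theorem~\ref{main} together with Boucksom's identification $E_{nK}(\alpha)=\mathbb{B}_+(D)$, and your matching $\Null(\alpha)=\Null(D)$ is fine. The inclusion $E_{nK}(\alpha)\subset\mathbb{B}_+(D)$ via $D=A+E$ decompositions is also essentially the paper's argument for that direction (the paper phrases it through linear series, but the content is the same).

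The gap is in your sketch of the reverse inclusion $\mathbb{B}_+(D)\subset E_{nK}(\alpha)$. Starting from a K\"ahler current $T$ with analytic singularities along $E_{nK}(\alpha)$ and applying Siu's decomposition $T=\sum\nu_i[E_i]+R$ does \emph{not} yield a Kodaira-type decomposition $D=A+E$: the residual current $R$ is merely closed positive, and there is no reason its class should be ample, nor that the $\nu_i$ and $E_i$ account for all of the non-K\"ahler locus (the singularities of $T$ may well lie in codimension $\geq 2$, in which case Siu's decomposition extracts nothing). Your fallback suggestions---working on cones of currents, or reducing by density to $\mathbb{Q}$-divisors---do not address this, since the issue is precisely producing an \emph{algebraic} witness (an ample $A$ and sections of $m(D-A)$) out of a transcendental one.

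The paper's proof of this inclusion goes in the opposite direction and is genuinely different from what you propose: given $x\notin E_{nK}(\alpha)$, take a K\"ahler current $T$ smooth near $x$, add a small logarithmic pole at $x$, subtract a small multiple of an ample class to land in $c_1(L)$ for a line bundle $L$, and then apply Nadel vanishing to the resulting multiplier ideal to produce a global section of $L$ not vanishing at $x$. This is the step that converts analytic positivity into algebraic sections, and it is what your sketch is missing.
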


Indeed the initial motivation for our work was to extend this result to all compact K\"ahler manifolds. For different extensions of this result to possibly singular projective varieties, see \cite{CMM} in positive characteristic, and \cite{CL} in characteristic zero.

From Theorem \ref{main}, one can also deduce a celebrated result of Demailly-P\u{a}un \cite{DP}.  Our proof is not, by any means, independent of the work of \cite{DP}, and it relies crucially on the mass concentration technique developed by Demailly-P\u{a}un.   We are, however, able to avoid the complicated induction on dimension used in \cite{DP}, which requires working on singular complex analytic spaces and using the results of P\u{a}un in \cite{Paun}.

\begin{cor}[\cite{DP}, Theorem 0.1]\label{demap}
Let $X$ be a compact K\"ahler manifold, and denote by $\mathcal{K}$ the K\"ahler cone of $X$.  Set
\begin{equation*}\begin{split}
 \mathcal{P} := \biggl\{ [\alpha] \in H^{1,1}(X,\mathbb{R}) \bigg| \int_{Y}\alpha^{\dim Y} &>0,\  \forall\, Y \subset X \mathrm{\ irreducible}\\
  &\mathrm{analytic\ subvariety},\  \dim Y>0 \biggr\}.
 \end{split}
 \end{equation*}
Then $\mathcal{K}$ is one of the connected components of $\mathcal{P}$.
\end{cor}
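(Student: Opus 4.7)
The plan is to show that the connected component of $\mathcal{P}$ containing $\mathcal{K}$ is exactly $\mathcal{K}$ itself. Since $\mathcal{K}$ is convex (hence connected) and contained in $\mathcal{P}$ (any Kähler class integrates strictly positively on every positive-dimensional subvariety, as the restriction of a Kähler form is Kähler), and since $\mathcal{K}$ is open in $H^{1,1}(X,\mathbb{R})$ (hence open in $\mathcal{P}$), it suffices to prove that $\mathcal{K}$ is \emph{closed} in $\mathcal{P}$. That is, if $[\alpha] \in \mathcal{P}$ lies in the closure $\overline{\mathcal{K}}$, then $[\alpha] \in \mathcal{K}$.

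To carry this out, I would fix such an $[\alpha]$. Being in $\overline{\mathcal{K}}$ means $[\alpha]$ is nef. Being in $\mathcal{P}$, applied with $Y=X$, gives $\int_X \alpha^n > 0$. Thus $[\alpha]$ satisfies all the hypotheses of Theorem \ref{main} (note that a compact Kähler manifold is trivially in Fujiki's class $\mathcal{C}$), so
\begin{equation*}
E_{nK}(\alpha) = \Null(\alpha).
\end{equation*}
On the other hand, the defining condition $[\alpha]\in\mathcal{P}$ says precisely that $\int_V \alpha^{\dim V} > 0$ for every positive-dimensional irreducible analytic subvariety $V\subset X$, which by definition of the null locus means $\Null(\alpha) = \emptyset$. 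Therefore $E_{nK}(\alpha) = \emptyset$, and since the non-Kähler locus is empty exactly when the class is Kähler, we conclude $[\alpha] \in \mathcal{K}$.

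There is essentially no obstacle left once Theorem \ref{main} is in hand, since the deduction is a soft topological argument: the entire analytic content, namely producing a smooth Kähler representative out of the vanishing of $E_{nK}$, is packaged into Theorem \ref{main}. The only minor point to verify carefully is the openness of $\mathcal{K}$ inside $\mathcal{P}$, which is automatic from the fact that $\mathcal{K}$ is open in the ambient vector space $H^{1,1}(X,\mathbb{R})$. Thus Corollary \ref{demap} follows essentially as a formal consequence of Theorem \ref{main} together with the trivial inclusion $\mathcal{K}\subset\mathcal{P}$ and the convexity of $\mathcal{K}$.
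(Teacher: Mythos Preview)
Your proof is correct and follows essentially the same approach as the paper: show $\mathcal{K}\subset\mathcal{P}$ is open and connected, then prove it is closed in $\mathcal{P}$ by taking $[\alpha]\in\overline{\mathcal{K}}\cap\mathcal{P}$, noting it is nef with $\int_X\alpha^n>0$ and $\Null(\alpha)=\emptyset$, and applying Theorem~\ref{main} to get $E_{nK}(\alpha)=\emptyset$. The only cosmetic difference is that the paper makes the implication ``$E_{nK}(\alpha)=\emptyset\Rightarrow[\alpha]$ K\"ahler'' explicit via Theorem~\ref{usefull} (producing a K\"ahler current with analytic singularities along $E_{nK}(\alpha)=\emptyset$, hence a smooth K\"ahler metric), whereas you invoke this fact directly as stated in the introduction.
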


Next, as an application of our main Theorem \ref{main}, we answer affirmatively a question of Feldman-Ilmanen-Knopf \cite[Question 2, Section 10]{FIK} and Campana (see \cite[Section 4]{Z1})
about finite time singularities of the K\"ahler-Ricci flow on compact K\"ahler manifolds.
Let $(X, \omega_{0})$ be a compact K\"ahler manifold of dimension $n$, and consider the K\"ahler-Ricci flow with initial metric $\omega_{0}$
\begin{equation}\label{KRF}
\frac{\de}{\de t}\omega=-\Ric(\omega),\quad \omega(0)=\omega_0.
\end{equation}
The flow always has a solution for short positive time \cite{Ca}, and a result of Tian-Zhang \cite{TZ} characterizes the maximal existence time $T$ of the flow
as
$$T=\sup\{t>0\ |\ [\omega_0]-tc_1(X) \textrm{\ is\ a\ K\"ahler\ class}\}.$$
Suppose that $T<\infty$, so the flow develops a finite time singularity. The limiting class of the metrics along the flow is $[\alpha]=[\omega_0]-Tc_1(X)$, which is nef but not K\"ahler.
We will say that the K\"ahler Ricci flow does not develop a singularity at a point $x\in X$ if there is a neighborhood $U\ni x$ and a smooth K\"ahler metric $\omega_T$ on $U$ such that
$\omega(t)$ converge to $\omega_T$ in the $C^\infty$ topology as $t\to T^-$. In fact, as we will see in Theorem \ref{singul}, we can equivalenty require that the curvature of $\omega(t)$ (or even just the scalar curvature) be bounded on $U$ up to time $T$, which is a more standard definition of singularity formation in the Ricci flow (see \cite{EMT}).

The following question was asked by Feldman-Ilmanen-Knopf \cite[Question 2, Section 10]{FIK} and Campana (see \cite[Section 4]{Z1}).
\begin{que}  Do singularities of the K\"ahler-Ricci flow develop precisely along analytic subvarieties of $X$?
\end{que}

It is not hard to see that if $\int_X\alpha^n=0$ then the flow develops singularities everywhere on $X$ (see Section \ref{sectkrf}), so the answer is affirmative in this case.
We prove that this is true also when $\int_X\alpha^n>0.$

\begin{thm}\label{finitet}
Let $(X^n,\omega_0)$ be any compact K\"ahler manifold. Assume that the solution $\omega(t)$ of the K\"ahler-Ricci flow \eqref{KRF} starting at $\omega_0$
exists on the maximal time interval $[0,T)$ with $T<\infty$, and that the limiting class $[\alpha]=[\omega_0]-Tc_1(X)$ satisfies $\int_X\alpha^n>0$. Then as $t\to T^-$ the metrics $\omega(t)$ develop singularities precisely on the Zariski closed set $E_{nK}(\alpha)=\Null(\alpha)$.
\end{thm}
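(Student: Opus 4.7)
By Theorem \ref{main} we have $E_{nK}(\alpha)=\Null(\alpha)$, so it suffices to identify the \emph{singular set} $\Sigma$ of the flow---the complement of the largest open set on which $\omega(t)$ converges in $C^\infty$ as $t\to T^-$---with $E_{nK}(\alpha)$. The plan is to prove the two inclusions $\Sigma\subset E_{nK}(\alpha)$ and $E_{nK}(\alpha)\subset\Sigma$ separately. The main tool, supplied by Theorem \ref{main} together with Demailly's regularization \cite{Dem92}, is a K\"ahler current $S=\chi+\ddb\psi\in[\alpha]$ with analytic singularities exactly along $E_{nK}(\alpha)$; here $\chi$ is a smooth representative of $[\alpha]$, $\psi$ is smooth on $\amp(\alpha)$ with $\psi\to-\infty$ along $E_{nK}(\alpha)$, and $S\geq c\,\omega_0$.

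For $\Sigma\subset E_{nK}(\alpha)$, the plan is to rewrite the flow as a parabolic complex Monge--Amp\`ere equation $\partial_t\vp=\log\bigl((\hat\omega_t+\ddb\vp)^n/\Omega\bigr)$ with $\omega(t)=\hat\omega_t+\ddb\vp$ and $\hat\omega_t$ an affine path of smooth forms joining $\omega_0$ to $\chi$. Tian--Zhang style a priori estimates \cite{TZ} give uniform $C^0$ bounds on $\vp$ and on $\partial_t\vp$. The central step is a parabolic Schwarz lemma applied to the auxiliary function $u=\log\tr{\omega(t)}{\omega_0}-A\vp+A\psi$ for $A$ large depending on the bisectional curvature of $\omega_0$: the term $A\psi$ acts as a barrier forcing the supremum of $u$ away from $E_{nK}(\alpha)$, and evaluating $(\partial_t-\Delta_{\omega(t)})u$ at the maximum point yields a uniform upper bound on $\tr{\omega(t)}{\omega_0}$ on compact subsets of $\amp(\alpha)$. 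Combined with the lower bound on the volume form, this gives two-sided bounds on $\omega(t)$ on such subsets; Evans--Krylov and the parabolic bootstrap then produce uniform $C^\infty$ estimates and hence smooth convergence on $\amp(\alpha)$.

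For $E_{nK}(\alpha)\subset\Sigma$ I would argue by contradiction: suppose $x\in E_{nK}(\alpha)\setminus\Sigma$, so $\omega(t)\to\omega_T$ smoothly on some neighborhood of $x$. The uniform $C^0$ bound on the flow potentials gives a global limit $\omega_T=\chi+\ddb\vp_T$ as a closed positive current in $[\alpha]$, with $\vp_T$ smooth near $x$. The mixed current $T_\ve=(1-\ve)\omega_T+\ve S$ is then a K\"ahler current in $[\alpha]$ whose potential near $x$ agrees with $\ve\psi$ up to a smooth function, so its log canonical threshold at $x$ tends to $\infty$ as $\ve\to 0$. For each level $k$ of Demailly's regularization \cite{Dem92} one can therefore choose $\ve=\ve(k)$ small enough that the multiplier ideal of $k$ times the potential of $T_\ve$ is trivial at $x$; the resulting approximation is a K\"ahler current in $[\alpha]$ with analytic singularities disjoint from $x$, which contradicts $x\in E_{nK}(\alpha)$.

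\emph{Main obstacle.} The delicate step is the parabolic Schwarz lemma of the first direction: one needs to verify that the singular barrier $A\psi$ produces the correct sign in the evolution of $u$ while simultaneously forcing the maximum into $\amp(\alpha)$. This works precisely because of the strict K\"ahler current property $S\geq c\,\omega_0$ and the analytic nature of the singularities of $\psi$, both of which are consequences of Theorem \ref{main}.
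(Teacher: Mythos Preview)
Your first direction---smooth convergence on $\amp(\alpha)=X\setminus E_{nK}(\alpha)$---follows the same barrier strategy as the paper: use the potential $\psi$ of a K\"ahler current with analytic singularities along $E_{nK}(\alpha)$ inside a parabolic maximum-principle argument. Two small corrections. First, uniform two-sided $C^0$ bounds on $\varphi$ and $\dot\varphi$ do \emph{not} follow from Tian--Zhang alone; only the upper bounds do. The lower bounds $\dot\varphi\geq C\psi-C$ and $\varphi\geq\psi-C$ already require the barrier $\psi$ and are proved separately (Lemmas~\ref{lem2} and~\ref{lem3}). Second, the paper runs the Aubin--Yau estimate on $\log\tr{\omega_0}{\omega}-A\varphi+A\psi$ rather than the Schwarz estimate on $\log\tr{\omega}{\omega_0}$, and combines it with the \emph{upper} bound on the volume form; your pairing of the Schwarz-direction bound with a volume-form lower bound does not close as written. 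These are minor variants of the same idea and easily repaired.

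Your second direction, however, has a genuine gap and is more complicated than necessary. You regularize $T_\ve=(1-\ve)\omega_T+\ve S$ at Bergman level $k$, choosing $\ve=\ve(k)$ so that $\mathcal{I}(k\phi_{T_\ve})_x=\mathcal{O}_x$, which forces $k\ve<\mathrm{lct}_x(\psi)$. But the $k$-th Demailly approximation loses positivity of order $C_0/k$ (with $C_0$ depending only on $[\alpha]$ and $\omega_0$), so for it to remain a K\"ahler current you also need $k\ve>C_0/c$. The two constraints are compatible only when $\mathrm{lct}_x(\psi)>C_0/c$, and nothing guarantees this: $\psi$ may have arbitrarily small log-canonical threshold at $x$, and $c$ may be small. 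Since you already have $E_{nK}(\alpha)=\Null(\alpha)$, the paper's route is available and much simpler: if $x\in\Null(\alpha)$ lies on a $k$-dimensional subvariety $V$ with $\int_V\alpha^k=0$, and $\omega(t)\to\omega_U$ smoothly on a neighbourhood $U\ni x$, then
\[
\int_V\omega(t)^k\ \geq\ \int_{V\cap U}\omega(t)^k\ \longrightarrow\ \int_{V\cap U}\omega_U^k\ >\ 0,
\]
contradicting $\int_V\omega(t)^k\to\int_V\alpha^k=0$.
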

Note that $\Null(\alpha)$ is precisely the union of all irreducible analytic subvarieties of $X$ whose volume with respect to $\omega(t)$ approaches zero as $t\to T^-$.
So it is intuitively clear (and not hard to prove) that singularities should form at least along $\Null(\alpha)$, and our main theorem implies that no singularities develop on its complement. Note also that in this setup, Zhang \cite{Z2} proved that the Ricci curvature of $\omega(t)$ must be unbounded from below on $X$ as $t\to T^-$. Our result shows that it remains locally uniformly bounded on compact sets away from $\Null(\alpha)$. We also remark here that Tian has also asked a slightly different question in \cite[p.81]{T1}, \cite[Conjecture 2]{T2}, namely that for any finite-time singularities of the K\"ahler-Ricci flow, the weak limit as $t\to T^-$ of $\omega(t)$ as currents should be a current which is smooth at least on a Zariski open set. Theorem \ref{finitet} gives a stronger answer in the case when $\int_X\alpha^n>0$. Let us also note that in the case when $X$ is projective and $[\omega_0]$ is a rational class, the fact that no singularities develop outside $\Null(\alpha)$ follows from the work of Tian-Zhang \cite{TZ}, together with Nakamaye's result \cite{Nak}.

There are many examples of such finite-time singularities. In the case when the class $[\alpha]$ is the pullback of a K\"ahler class via the blowdown of an exceptional divisor, a delicate analysis of the singularity formation was done by Song-Weinkove \cite{SW2}. See also \cite{FIK, LT, So, So2, SSW, T1, T2, TZ, Z3} for more results about finite time singularities of the K\"ahler-Ricci flow.
We also remark that if $X$ is compact K\"ahler with nonnegative Kodaira dimension, and $T$ is {\em any} finite time singularity of the K\"ahler-Ricci flow, then we must necessarily have that $\int_X(\omega_0-Tc_1(X))^n>0$, and our Theorem \ref{finitet} applies (see Section \ref{sectkrf}).

The next application is to the study of degenerations of Ricci-flat K\"ahler metrics on Calabi-Yau manifolds.
Let $X$ be a compact K\"ahler manifold with $c_1(X)=0$ in $H^2(X,\mathbb{R})$. We will call such a manifold a Calabi-Yau manifold.
Let $\mathcal{K}\subset H^{1,1}(X,\mathbb{R})$ be the K\"ahler cone of $X$. Then Yau's theorem \cite{Y} says that for every K\"ahler class $[\alpha]\in\mathcal{K}$
there exists a unique K\"ahler metric $\omega\in[\alpha]$ with $\Ric(\omega)=0$.
Let now $[\alpha]\in \de\mathcal{K}$ be a $(1,1)$ class on the boundary of the K\"ahler cone (so $[\alpha]$ is nef) with $\int_X\alpha^n>0$.
Let $[\alpha_t]:[0,1]\to\overline{\mathcal{K}}$ be a path of $(1,1)$ classes with $[\alpha_t]\in \mathcal{K}$ for $t>0$ and with $[\alpha_0]=[\alpha]$.
For $0<t\leq 1$ write $\omega_t$ for the unique Ricci-flat K\"ahler metric in the class $[\alpha_t]$.

\begin{thm}\label{main1}Let $X^n$ be a Calabi-Yau manifold and $[\alpha]\in\de \mathcal{K}$ be a nef class with $\int_X\alpha^n>0$.
Then there is a smooth incomplete Ricci-flat K\"ahler metric $\omega_0$ on $X\backslash \Null(\alpha)$ such that given any path $[\alpha_t]$, $0<t\leq 1$, of K\"ahler classes approaching $[\alpha]$ as $t\to 0$, the Ricci-flat K\"ahler metrics $\omega_t$ in the class $[\alpha_t]$ converge to $\omega_0$ as $t\to 0$ in
the $C^\infty_{\mathrm{loc}}(X\backslash \Null(\alpha))$ topology. Moreover, $(X,\omega_t)$ converges as $t\to 0$ to the metric completion of $(X\backslash \Null(\alpha),\omega_0)$ in the Gromov-Hausdorff topology.
\end{thm}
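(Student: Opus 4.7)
The plan is to follow the strategy developed by the second author in the projective case, with Theorem \ref{main} providing the key analytic ingredient needed to treat the non-algebraic setting. The starting point is to apply Theorem \ref{main} to obtain $E_{nK}(\alpha) = \Null(\alpha)$, which yields a K\"ahler current $T = \alpha + \ddb \psi \in [\alpha]$ with analytic singularities concentrated precisely along $\Null(\alpha)$ and restricting to a smooth K\"ahler metric on $X\setminus\Null(\alpha)$. This $T$ plays, in the non-algebraic setting, the role that was played in the projective case by the augmented base locus of a big and nef $\mathbb{R}$-divisor.

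Next, fix smooth $(1,1)$-forms $\alpha_t$ representing $[\alpha_t]$ and varying smoothly in $t \in [0,1]$, and write the Ricci-flat metrics as $\omega_t = \alpha_t + \ddb \vp_t$. By Yau's theorem \cite{Y} the potentials $\vp_t$ solve a Monge-Amp\`ere equation of the form $(\alpha_t + \ddb \vp_t)^n = c_t \Omega$ for a smooth Ricci-flat reference volume form $\Omega$ and positive constants $c_t$ which stay bounded away from $0$ and $\infty$ as $t \to 0$. The heart of the proof is to establish three a priori estimates uniform in $t$: a global $L^\infty$ bound on $\vp_t$ (normalized by $\sup_X \vp_t = 0$) coming from pluripotential theory of Monge-Amp\`ere equations in big classes in the spirit of Ko\l{}odziej, Eyssidieux-Guedj-Zeriahi and Demailly-Pali; a local $C^2$ estimate on $\omega_t$ on compact subsets $K \Subset X\setminus \Null(\alpha)$, obtained by inserting the singular potential $\psi$ of the auxiliary K\"ahler current $T$ into a Chern-Lu / Aubin-Yau maximum principle computation, which forces the maximum to be attained on $K$ and yields bounds depending only on $K$; and higher-order $C^k_{\mathrm{loc}}$ estimates via Evans-Krylov and Schauder theory.

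Given these estimates, Arzel\`a-Ascoli produces a subsequential smooth limit $\omega_0$ on $X\setminus\Null(\alpha)$, which is an incomplete Ricci-flat K\"ahler metric. Path-independence and uniqueness of $\omega_0$ follow because the limiting potential can be identified with the unique bounded $\alpha$-plurisubharmonic solution of the degenerate Monge-Amp\`ere equation on $X$ supplied by Boucksom-Eyssidieux-Guedj-Zeriahi \cite{BEGZ}. For the Gromov-Hausdorff statement, I would adapt the diameter and volume comparison arguments from the projective case: the K\"ahler current $T$, together with the $L^\infty$ bound on $\vp_t$, provides distance bounds between points of $X\setminus\Null(\alpha)$ uniform in $t$, and a standard $\varepsilon$-net / Gromov-Hausdorff approximation argument then identifies the limit with the metric completion of $(X\setminus\Null(\alpha), \omega_0)$.

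The principal obstacle is not any single estimate but the very existence of a framework in which these estimates can be carried out. Each of the arguments above requires a proper analytic subvariety $V\subset X$ outside which a K\"ahler current with analytic singularities is available, and prior to Theorem \ref{main} no such $V$ was known to exist in the non-algebraic K\"ahler setting. Once Theorem \ref{main} supplies $V = \Null(\alpha) = E_{nK}(\alpha)$, the remaining work amounts to checking that the maximum principle argument in step (ii) and the Gromov-Hausdorff approximation go through on a general compact K\"ahler manifold; both are essentially local on $X\setminus\Null(\alpha)$ and adapt from the projective case with only routine modifications.
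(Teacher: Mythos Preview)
Your proposal is correct and follows essentially the same route as the paper: the K\"ahler current with analytic singularities along $E_{nK}(\alpha)$, the BEGZ estimate $\vp_t \geq \psi - C$, the Aubin--Yau $C^2$ computation with the singular barrier $\psi$, local higher-order estimates, and finally the Rong--Zhang argument for Gromov--Hausdorff convergence. One small correction of emphasis: the existence of a K\"ahler current smooth off a proper analytic subvariety does \emph{not} require Theorem~\ref{main} --- it already follows from Demailly--P\u{a}un, Demailly regularization, and Boucksom (Theorem~\ref{usefull}) --- so the $C^\infty_{\mathrm{loc}}$ convergence on $X\setminus E_{nK}(\alpha)$ is available without it; Theorem~\ref{main} is invoked only to identify $E_{nK}(\alpha)$ with $\Null(\alpha)$, which is precisely what the Rong--Zhang Gromov--Hausdorff argument needs (their proof uses that the limiting class is degenerate on the exceptional set).
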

The same result holds if instead of a path one considers a sequence of K\"ahler classes $[\alpha_i]$ converging to $[\alpha]$ (the proof is identical).
In the special case when $X$ is projective and the class $[\alpha]$ belongs to the real N\'eron-Severi group (i.e. it is the first Chern class of an $\mathbb{R}$-divisor), this was proved by the second author in \cite{deg}, except for the statement about Gromov-Hausdorff convergence which is due to Rong-Zhang \cite{RZ} (see also \cite{RZ2, So2}). In this case there is a birational contraction map $\pi:X\to Y$ to a singular Calabi-Yau variety, which is an isomorphism away from $\Null(\alpha)$, and such that $\omega_0$ is the pullback of a singular K\"ahler-Einstein metric on $Y$ \cite{EGZ}. The contraction map is also used crucially to construct a smooth semipositive representative of the limiting class $[\alpha]$. The interested reader can also consult the surveys \cite{surv, nyac}.
In our more general setup there is no contraction map, but our theorem still shows that set where the metrics $\omega_t$ fail to converge smoothly to a limiting K\"ahler metric is precisely equal to $\Null(\alpha)$, exactly as in the projective case (see Section \ref{sectcy}).

The main technical ingredient in the proof of our main theorem is a technical extension-type result for K\"ahler currents with analytic singularities in a nef and big class which are defined on submanifold of $X$ (Theorem \ref{extend}).
The problem of extending positive currents from subvarieties has recently generated a great deal of interest in both analytic and geometric applications \cite{CGZ, His, Sch, Wu}.  At the moment, the best results available are due to Coman-Guedj-Zeriahi \cite{CGZ}, who proved that every positive current in a Hodge class on a (possibly singular) subvariety $V \subset \mathbb{CP}^{N}$ is the restriction of a globally defined, positive current.  Their proof uses a result of Coltoiu \cite{Co} on extending Runge subsets of analytic subsets of $\mathbb{C}^{N}$ and a delicate, inductive gluing procedure. By contrast, the technique developed here is constructive, and relies only on a gluing property of plurisubharmonic functions with analytic singularities.  The main technical tool is the use of resolution of singularities in order to obtain estimates which allow us to glue plurisubharmonic functions near their singularities.  The ideas used here are similar, and indeed inspired by, techniques developed by Collins-Greenleaf-Pramanik \cite{CGP} to obtain sharp estimates for singular integral operators. In turn, these were motivated by the seminal work of Phong, Stein and Sturm \cite{PSS, PS, PS2}. The advantage of these local techniques is that we can work on general K\"ahler manifolds and with arbitrary K\"ahler classes. In our paper \cite{CT}, we refined these techniques and proved an extension theorem for K\"ahler currents with analytic singularities in K\"ahler classes on complex submanifolds.

This paper is organized as follows. In Section \ref{sectprel} we give some background and definitions, and show how Corollaries \ref{locus} and \ref{demap} follow from Theorem \ref{main}.
The main Theorem \ref{main} is proved in Section \ref{sectproof}, where we also establish an extension-type result for K\"ahler currents (Theorem \ref{extend}). Section \ref{sectkrf} is devoted to the study of finite time singularities of the K\"ahler-Ricci flow, and contains the proof of Theorem \ref{finitet}. Finally, Theorem \ref{main1} on degenerations of Calabi-Yau metrics is proved in Section \ref{sectcy}.\\

{\bf Acknowledgments. }We would like to thank D.H. Phong for all his advice and support,
and S. Boucksom, S. Dinew, G. Sz\'ekelyhidi, B. Weinkove, S. Zelditch, A. Zeriahi, Y. Zhang and Z. Zhang for comments and discussions. We also thank the referees for carefully reading our manuscript, for helpful comments and corrections, and for a useful suggestion which simplified our original proof of Theorem \ref{extend}.

\section{Preliminaries}\label{sectprel}
In this section we give the necessary definitions and background, and prove Corollaries \ref{locus} and \ref{demap} assuming our main Theorem \ref{main}.

Throughout this section and the next one, $X$ is an $n$-dimensional compact complex manifold in Fujiki's class $\mathcal{C}$ \cite{Fu} (i.e. bimeromorphic to a compact K\"ahler manifold), and $\omega$ is a Hermitian metric on $X$. We will denote by $H^{1,1}(X,\mathbb{R})$ the Bott-Chern cohomology of $d$-closed real $(1,1)$ forms (or currents) modulo $\de\dbar$-exact ones.

Let $\alpha$ be a smooth closed real $(1,1)$-form on $X$.
We will say that a function $\phi$ on $X$ is $\alpha$-plurisubharmonic ($\alpha$-PSH) if $\phi$ is in $L^1(X)$, upper semicontinuous, and $\alpha+\ddb \phi\geq 0$ as currents.
In this case $T=\alpha+\ddb\phi$ is a closed positive $(1,1)$ current on $X$, in the cohomology class $[\alpha]\in H^{1,1}(X,\mathbb{R})$, and conversely every closed positive
$(1,1)$ current in the class $[\alpha]$ is of this form. For details about this, and other basic facts on closed positive currents, we refer the reader to the book of Demailly \cite{Demb}.

Closed positive currents are in general singular, and a convenient way to measure their singularities is the Lelong number
$\nu(T,x).$ We can also define the Lelong sublevel sets
$$E_c(T)=\{x\in X\ |\ \nu(T,x)\geq c\},$$
where $c>0$, and
\begin{equation*}
E_+(T) := \bigcup_{c>0}E_c(T).
\end{equation*}
A fundamental result of Siu \cite{Siu74} shows that the Lelong sublevel sets $E_c(T)$ are in fact closed, analytic subvarieties of $X$.  It follows that $E_+(T)$ is a countable union of analytic subvarieties. We also define a K\"ahler current to be a closed positive $(1,1)$ current $T$ such that $T\geq \ve\omega$ holds as currents on $X$ for some $\ve>0$. Thanks to a result of Demailly-P\u{a}un \cite{DP}, a compact complex manifold is in class $\mathcal{C}$ if and only if it admits a K\"ahler current.
If a $(1,1)$ class $[\alpha]$ contains a K\"ahler current, it is called big. This terminology comes from algebraic geometry: if $X$ is projective and $L$ is a holomorphic line bundle on $X$, then $L$ is big (in the sense that $h^0(X,L^k)\sim k^n$ for $k$ large) if and only if $c_1(L)$ is big in the above sense \cite{Dem92b}.

If $X$ is K\"ahler, the set of all K\"ahler classes on $X$ is the K\"ahler cone $\mathcal{K}\subset H^{1,1}(X,\mathbb{R})$. Classes in its closure $\ov{\mathcal{K}}$ are called nef (or numerically effective). A class $[\alpha]$ is nef if and only if for every $\ve>0$ there exists a smooth function $\rho_\ve$ on $X$ such that
$\alpha+\ddb\rho_\ve \geq -\ve\omega.$ One then takes this as the definition of a nef class on a general compact complex manifold.
It is easy to see that the pullback of a nef class by a holomorphic map between compact complex manifolds is still nef.
Moreover, it is clear that if the class $c_{1}(L)$ is nef in the above sense then the line bundle $L$ is nef in the usual algebro-geometric sense (i.e. $L\cdot C\geq 0$ for all curves $C\subset X$), and the converse is true when $X$ is projective \cite{Dem92b} and also when $X$ is Moishezon thanks to a result of P\u{a}un \cite{Paun}, but false for general compact K\"ahler manifolds (a generic complex torus $\mathbb{C}^n/\Lambda$ provides many counterexamples).

A useful notion, introduced by Demailly \cite{Dem92}, is that of currents with analytic singularities. These are closed positive $(1,1)$ currents $T=\alpha+\ddb\phi$ such that
given each point $x\in X$ there is an open neighborhood $U$ of $x$ with holomorphic functions $f_1,\dots,f_N$ such
that on $U$ we have
\begin{equation}\label{anal}
\vp=\delta\log\left(\sum_j |f_j|^2\right)+g,
\end{equation}
where $g$ is a smooth function on $U$ and $\delta$ is a nonnegative real number. The holomorphic functions $f_j$ generate a coherent ideal sheaf which defines the analytic singularities of $\vp$. In this case the current $T$ is smooth away from an analytic subvariety of $X$, which in this case is exactly $E_+(T)$ (and also equal to $E_c(T)$ for some $c>0$). Then Demailly's regularization procedure \cite{Dem92} shows that if a class $[\alpha]$ is big, then it also contains a K\"ahler current with analytic singularities. We will also say that a positive $(1,1)$ current $T$ has {\em weakly analytic singularities} if its potentials are locally of the form \eqref{anal} with $g$ continuous.

A fundamental theorem of Demailly-P\u{a}un \cite[Theorem 2.12]{DP} states that if $X$ is a compact K\"ahler manifold, and $[\alpha]$ is nef with $\int_X\alpha^{n}>0$, then $[\alpha]$ contains a K\"ahler current (i.e. $[\alpha]$ is big).

Finally, we introduce a number of different subsets of $X$, all of which aim to measure the defect from a class $[\alpha]$ being K\"ahler.  In the rational case, the sets measure the defect from the corresponding $\mathbb{Q}$-divisor being ample.

\begin{defn}\label{positivity}
Let $(X,\omega)$ be an $n$-dimensional compact complex manifold in class $\mathcal{C}$ equipped with a Hermitian metric, and let $[\alpha] \in H^{1,1}(X,\mathbb{R})$ be a big class.
\begin{enumerate}
\item[(i)] Following \cite{Bou}, we define the \emph{non-K\"ahler locus}, denoted $E_{nK}(\alpha)$, to be the set
\begin{equation*}
E_{nK}(\alpha) := \bigcap_{T \in [\alpha]} E_{+}(T),
\end{equation*}
where the intersection is taken over all K\"ahler currents in the class $[\alpha]$.
\item[(ii)] We define the \emph{null locus}, denoted by $\Null(\alpha)$, to be the set
\begin{equation*}
\Null(\alpha) := \bigcup_{\int_{V}\alpha^{\dim V}=0}V
\end{equation*}
where the union is taken over all positive dimensional irreducible analytic subvarieties of $X$.
\end{enumerate}
\begin{enumerate}
\item[(iii)] Following \cite{BEGZ}, we define the \emph{ample locus}, denoted by $\amp(\alpha)$ to be the set of points $x \in X$ such that there exists a K\"ahler current $T \in [\alpha]$ with analytic singularities, which is smooth in a neighborhood of $x$.
\end{enumerate}
Finally, suppose $X$ is a smooth projective variety, and
$$[\alpha] \in NS(X,\mathbb{R})=(H^2(X,\mathbb{Z})_{\mathrm{free}}\cap H^{1,1}(X))\otimes \mathbb{R},$$
belongs to the real N\'eron-Severi group, so $[\alpha]=c_1(D)$ for $D$ an $\mathbb{R}$-divisor:
\begin{enumerate}
\item[(iv)] We define the \emph{augmented base locus}, denoted $\mathbb{B}_{+}(\alpha)$, or $\mathbb{B}_{+}(D)$ to be the stable base locus of $D-A$ for some small, ample $\mathbb{R}$-divisor $A$ with $D-A$ a $\mathbb{Q}$-divisor.  That is, $\mathbb{B}_{+}(\alpha)$ is the base locus of the linear series $|m(D-A)|$ for $m$ sufficiently large and divisible.
\end{enumerate}
\end{defn}
The definition of $\mathbb{B}_{+}(D)$ in (iv) is well-posed, since we get the same stable base locus for all sufficiently small ample $\mathbb{R}$-divisors $A$ with $D-A$ a $\mathbb{Q}$-divisor \cite[Proposition 1.5]{ELMNP2}, and $\mathbb{B}_{+}(D)$ depends only on the numerical class of $D$ (so the notation $\mathbb{B}_{+}(\alpha)$ is justified). Furthermore, $\mathbb{B}_+(D)$ equals the intersection of the supports of $E$ for all ``Kodaira's Lemma''-type decompositions $D=A+E$, where $A$ is an ample $\mathbb{R}$-divisor and $E$ is an effective $\mathbb{R}$-divisor \cite[Remark 1.3]{ELMNP2}.

In \cite[Theorem 3.17 (ii)]{Bou}, Boucksom observed that in a big class we can always find a K\"ahler current with analytic singularities precisely along
$E_{nK}(\alpha)$. Combining this statement with the theorem of Demailly-P\u{a}un \cite{DP} and Demailly's regularization \cite{Dem92}, we get the following:

\begin{thm}\label{usefull}
Let $X^n$ be a compact complex manifold in class $\mathcal{C}$ and $[\alpha] \in H^{1,1}(X,\mathbb{R})$ be a nef class with $\int_X\alpha^n>0$.
Then there exists a K\"ahler current $T$ in the class $[\alpha]$, with analytic singularities such that
\begin{equation*}
E_{+}(T) = E_{nK}(\alpha).
\end{equation*}
In particular $E_{nK}(\alpha)$ is an analytic subvariety
 of $X$.
\end{thm}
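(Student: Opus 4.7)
The plan is to combine three classical results: the Demailly--P\u{a}un characterization of big classes, Demailly's regularization theorem producing currents with analytic singularities, and a Noetherianity argument due to Boucksom, as the narrative preceding the statement already hints.

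First, since $[\alpha]$ is nef with $\int_X\alpha^n>0$, the Demailly--P\u{a}un theorem \cite{DP} implies that $[\alpha]$ is big, i.e.\ contains a K\"ahler current. For $X$ K\"ahler this is their Theorem~2.12; the class $\mathcal{C}$ case reduces to this by passing to a K\"ahler modification $\mu\colon\ti{X}\to X$, applying the theorem to $\mu^*[\alpha]$ on $\ti{X}$ (still nef with $\int_{\ti{X}}(\mu^*\alpha)^n=\int_X\alpha^n>0$), and pushing the resulting current forward to $X$. Demailly's regularization \cite{Dem92} then yields a K\"ahler current $T_0\in[\alpha]$ with analytic singularities, so that $E_+(T_0)$ is a proper analytic subvariety of $X$ automatically containing $E_{nK}(\alpha)$.

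Following Boucksom \cite[Theorem 3.17(ii)]{Bou}, the next step is to refine $T_0$ to a K\"ahler current whose singular locus is exactly $E_{nK}(\alpha)$. By the very definition of $\amp(\alpha)=X\setminus E_{nK}(\alpha)$ as the set of points near which some K\"ahler current with analytic singularities in $[\alpha]$ is smooth, to each $x\notin E_{nK}(\alpha)$ one associates such a current $T_x=\alpha+\ddb\phi_x\in[\alpha]$ and the analytic subvariety $V_x:=E_+(T_x)\not\ni x$. Then $\bigcap_{x\notin E_{nK}(\alpha)}V_x=E_{nK}(\alpha)$, and the Noetherian property of descending chains of analytic subvarieties of the compact manifold $X$ forces a finite subintersection $V_{x_1}\cap\cdots\cap V_{x_N}$ to already equal $E_{nK}(\alpha)$.

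The final step is to fuse $T_{x_1},\ldots,T_{x_N}$ into a single K\"ahler current $T\in[\alpha]$ with analytic singularities whose $-\infty$ locus is precisely this intersection. The natural candidate is (the upper semicontinuous regularization of) the pointwise maximum $\phi=\max(\phi_{x_1},\ldots,\phi_{x_N})$ of the local potentials: it is $\alpha$-PSH, it inherits the K\"ahler-current bound $\alpha+\ddb\phi\geq(\min_i\delta_i)\,\omega$ from the individual bounds $\alpha+\ddb\phi_{x_i}\geq\delta_i\omega$, and its singular set equals $\bigcap_i E_+(T_{x_i})=E_{nK}(\alpha)$. The principal obstacle here, and the only subtle point in the outline, is that $\max$ of functions of the form \eqref{anal} is not manifestly of the same form; this is handled by passing to a common log-resolution of the coherent ideal sheaves defining the singularities of the $\phi_{x_i}$, on which the pullbacks become simple normal crossing and $\max$ is easily seen, up to a bounded error, to be comparable to $c\log\sum|f_j|^2$ for suitable holomorphic $f_j$ cutting out the preimage of $E_{nK}(\alpha)$, and then descending. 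Taking this $T$ gives the required current, and the analyticity of $E_{nK}(\alpha)$ follows immediately since $E_{nK}(\alpha)=E_+(T)$ by construction.
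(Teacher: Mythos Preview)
Your proof follows exactly the route the paper sketches in the paragraph after the theorem: Demailly--P\u{a}un on a K\"ahler modification to produce a K\"ahler current, Demailly regularization to obtain analytic singularities, and then Boucksom's Noetherian argument \cite[Theorem 3.17(ii)]{Bou} to arrange $E_+(T)=E_{nK}(\alpha)$. The overall outline is correct and matches the paper.

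There is one small technical wrinkle in your final step. Your log-resolution argument for the maximum $\phi=\max_i\phi_{x_i}$ only shows that $\phi$ agrees with a model $c\log\sum|f_j|^2$ up to a \emph{bounded} (at best continuous) error; in the paper's terminology this is \emph{weakly analytic} singularities, not analytic singularities in the sense of \eqref{anal}, where the remainder $g$ is required to be smooth. Already $\max(\log|z_1|^2,\log|z_2|^2)$ differs from $\log(|z_1|^2+|z_2|^2)$ by a function that is merely continuous, and descending from the resolution does not improve this. The cleanest repair is to drop the resolution step and simply apply Demailly regularization once more to the K\"ahler current $\alpha+\ddb\phi$: the regularized potentials $\phi_m$ have genuine analytic singularities and satisfy $\phi_m\geq\phi$, hence are locally bounded off $\bigcap_i V_{x_i}=E_{nK}(\alpha)$; for a potential with analytic singularities, local boundedness forces smoothness, so $E_+(T_m)\subset E_{nK}(\alpha)$, while the reverse inclusion is automatic from the definition of $E_{nK}$.
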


Indeed, since $X$ is in class $\mathcal{C}$ there is a modification $\mu:\ti{X}\to X$ with $\ti{X}$ a compact K\"ahler manifold \cite{Fu, Va}. The class $[\mu^*\alpha]$ is then nef and satisfies
$\int_{\ti{X}}(\mu^*\alpha)^n=\int_X\alpha^n>0$, and so it contains a K\"ahler current $T$ by Demailly-P\u{a}un \cite{DP}. It is then easy to check that the pushforward $\mu_*T$ is a K\"ahler current on $X$ in the class $[\alpha]$. Then Demailly's regularization \cite{Dem92} and Boucksom's argument \cite[Theorem 3.17 (ii)]{Bou} (neither of which needs $X$ to be K\"ahler) complete the proof.

Using this result, we can show how Corollary \ref{demap} follows from Theorem \ref{main}.
\begin{proof}[Proof of Corollary \ref{demap}]
The inclusion $\mathcal{K} \subset \mathcal{P}$ is clear.  Moreover, $\mathcal{K}$ is clearly connected and open.  Suppose $[\alpha] \in \overline{\mathcal{K}}\cap \mathcal{P}$.  Then $[\alpha]$ is nef and $\int_X\alpha^n>0$.  Moreover, since $[\alpha] \in \mathcal{P}$, we have $\Null(\alpha) = \emptyset$.  By Theorem \ref{main} we have that $E_{nK}(\alpha) =\emptyset$.  Thanks to Theorem \ref{usefull} there exists $T \in[\alpha]$ a K\"ahler current with analytic singularities and with $E_{+}(T) = \emptyset$, which means that $T$ is a smooth K\"ahler metric.  Thus, $[\alpha] \in \mathcal{K}$, and so $\mathcal{K}$ is closed in $\mathcal{P}$.  Since $\mathcal{K}$ is connected, the corollary follows.
\end{proof}

There are some relations among the subsets of $X$ introduced in Definition \ref{positivity}. The following two propositions are
available in the literature, and we include proofs for the convenience of the reader.

\begin{prop}[Boucksom \cite{Bou}]\label{nkamp}
Let $X^n$ be a compact complex manifold in class $\mathcal{C}$ and $[\alpha] \in H^{1,1}(X,\mathbb{R})$ be a nef class with $\int_X\alpha^n>0$. Then $E_{nK}(\alpha) = \amp(\alpha)^{c}$.
\end{prop}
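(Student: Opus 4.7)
The plan is to prove the two inclusions separately, using Theorem \ref{usefull} as the main input for one direction and a regularized-max gluing construction for the other.

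\textbf{The easy direction ($\amp(\alpha)^c \subseteq E_{nK}(\alpha)$).} I would start here. Theorem \ref{usefull} hands us a K\"ahler current $T_0 \in [\alpha]$ with analytic singularities such that $E_+(T_0) = E_{nK}(\alpha)$. Since $T_0$ has analytic singularities, its local potentials are of the form $\delta \log \sum_j |f_j|^2 + g$ with $g$ smooth, and hence $T_0$ is smooth precisely on $E_+(T_0)^c$. So for any $x \notin E_{nK}(\alpha) = E_+(T_0)$, the current $T_0$ itself is smooth near $x$, witnessing $x \in \amp(\alpha)$.

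\textbf{The harder direction ($E_{nK}(\alpha) \subseteq \amp(\alpha)^c$).} Suppose for contradiction that $x \in E_{nK}(\alpha)$ but also $x \in \amp(\alpha)$. Pick again $T_0 = \alpha + \ddb \phi_0$ from Theorem \ref{usefull}, so $x \in E_+(T_0) = E_{nK}(\alpha)$ and $T_0 \geq \ve \omega$ for some $\ve>0$. By the assumption $x \in \amp(\alpha)$, there is another K\"ahler current $T' = \alpha + \ddb\phi' \in [\alpha]$ with analytic singularities, smooth on a neighborhood $U$ of $x$, with $T' \geq \ve'\omega$. The goal is to produce from $T_0$ and $T'$ a new K\"ahler current $T_\lambda \in [\alpha]$ with analytic singularities such that $x \notin E_+(T_\lambda)$, contradicting $x \in E_{nK}(\alpha) = \bigcap_{T} E_+(T)$.

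\textbf{The gluing step.} I would fix a constant $C>0$ large enough that $\phi_0 - C < \phi' - 1$ on some smaller neighborhood $U' \Subset U$ of $x$ (possible since $\phi'$ is smooth on $U$ while $\phi_0(x) = -\infty$), and also so that $\phi' - C_1 < \phi_0 + 1$ globally off an appropriate shrinking (by global boundedness of smooth potentials up to adding another constant; one can just add constants so the inequality holds where needed). Then set
\[
\phi_\lambda := \max{}_\lambda\bigl(\phi_0 - C,\ \phi'\bigr),
\]
where $\max_\lambda$ denotes Demailly's regularized maximum, which is smooth, agrees with the ordinary maximum when the two arguments differ by more than $\lambda$, and satisfies $\max_\lambda(u,v) \ge \max(u,v)$ plus the usual convexity/positivity properties preserving $\alpha$-plurisubharmonicity. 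Write $T_\lambda := \alpha + \ddb \phi_\lambda$. Three observations make the argument work: (i) On $U'$, for small enough $\lambda$, $\phi_\lambda = \phi'$, so $T_\lambda$ is a smooth K\"ahler form near $x$ and in particular $x \notin E_+(T_\lambda)$. (ii) Globally, $T_\lambda$ is a K\"ahler current, since the regularized max of two $\alpha$-PSH potentials is $\alpha$-PSH, and the positivity lower bound $T_\lambda \geq \min(\ve,\ve')\omega$ survives the regularization. (iii) The singular set of $\phi_\lambda$ is contained in $\{\phi_0 = -\infty\} \cap \{\phi' = -\infty\} = E_+(T_0) \cap E_+(T')$, and near each point of this set one gets a local expression of the form \eqref{anal} by combining the local analytic expressions of $\phi_0$ and $\phi'$ (the regularized max of two potentials of the form $\delta_i \log \sum |f_{i,j}|^2 + g_i$ still has analytic singularities cut out by the product ideal). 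Thus $T_\lambda$ has analytic singularities and $x \notin E_+(T_\lambda)$, which contradicts $x \in E_{nK}(\alpha)$.

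\textbf{Main obstacle.} The nontrivial technical point is verifying that $T_\lambda$ genuinely has analytic singularities (rather than merely being a K\"ahler current smooth near $x$), because the ordinary pointwise maximum of two functions of the form \eqref{anal} is not obviously of that form. This is handled by Demailly's well-known observation that on the transition region between $\phi_0 - C$ and $\phi'$ one has $\phi_\lambda$ smooth, while on the singular set both potentials coincide up to smooth error with $\log$ of a squared norm, so that $\phi_\lambda$ inherits the analytic-singularities structure associated with the product ideal sheaf. Everything else is bookkeeping of constants.
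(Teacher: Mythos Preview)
Your ``easy direction'' matches the paper's argument exactly. The issue is with your ``harder direction'': you have misidentified which inclusion is the nontrivial one. In the paper, the inclusion $E_{nK}(\alpha)\subset \amp(\alpha)^c$ (equivalently $\amp(\alpha)\subset E_{nK}(\alpha)^c$) is dismissed as \emph{trivial}, and indeed it is: by Definition~\ref{positivity}(i), $E_{nK}(\alpha)=\bigcap_T E_+(T)$ where the intersection runs over \emph{all} K\"ahler currents $T\in[\alpha]$, not only those with analytic singularities. So if $x\in\amp(\alpha)$, the witnessing current $T'$ is itself a K\"ahler current smooth near $x$, hence $\nu(T',x)=0$, hence $x\notin E_+(T')$, hence $x\notin E_{nK}(\alpha)$. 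No gluing, no regularized maximum, no $T_0$ is needed.

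Your entire gluing construction is therefore superfluous. Worse, the technical claim you flag as the ``main obstacle'' is actually false as stated: the (regularized) maximum of two functions with analytic singularities need not have analytic singularities in the sense of \eqref{anal}. For instance, with $\phi_0=\log|z_1|^2$ and $\phi'=\log|z_2|^2$, the difference $\max(\phi_0,\phi')-\log(|z_1|^2+|z_2|^2)$ is not continuous at the origin (its limit along $z_2=0$ is $0$, along $z_1=z_2$ it is $-\log 2$), so the max does not even have weakly analytic singularities. Fortunately this gap is irrelevant: you never needed $T_\lambda$ to have analytic singularities, since $x\notin E_+(T_\lambda)$ already contradicts $x\in E_{nK}(\alpha)$---and for that matter so does $x\notin E_+(T')$ directly.
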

\begin{proof}
Apply Theorem \ref{usefull}, to get a K\"ahler current $T\in[\alpha]$ with analytic singularities
with $E_+(T)=E_{nK}(\alpha)$. Since $T$ has analytic singularities, it is smooth precisely wherever its Lelong number vanishes, so $T$ is smooth off $E_+(T)$. This shows that $\amp(\alpha)^{c}\subset E_{nK}(\alpha)$. The reverse inclusion is trivial.
\end{proof}

Corollary \ref{locus} follows immediately from Theorem \ref{main} and the following:
\begin{prop}[Boucksom \cite{BoT}, Corollary 2.2.8]
If $[\alpha]$ is the class of a big and nef $\mathbb{R}$-divisor $D$ on a smooth projective variety $X$ over $\mathbb{C}$, then $E_{nK}(\alpha) = \mathbb{B}_{+}(\alpha)$.
\end{prop}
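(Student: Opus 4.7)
The plan is to prove the two inclusions separately, using the characterization of $\mathbb{B}_{+}$ recorded in the excerpt: $\mathbb{B}_{+}(D)$ equals the intersection of $\mathrm{supp}(E)$ over all Kodaira-type decompositions $D = A + E$ with $A$ an ample $\mathbb{R}$-divisor and $E$ an effective $\mathbb{R}$-divisor.

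For the inclusion $E_{nK}(\alpha) \subset \mathbb{B}_{+}(\alpha)$ I would argue by contrapositive: given $x \notin \mathbb{B}_{+}(\alpha)$, pick such a decomposition with $x \notin \mathrm{supp}(E)$, a smooth K\"ahler form $\omega_A \in c_1(A)$, and write $E = \sum_i a_i E_i$ with $a_i > 0$ and $E_i$ prime. The Lelong-Poincar\'e formula produces a current
\begin{equation*}
T = \omega_A + \sum_i a_i [E_i]
\end{equation*}
representing $[\alpha]$ which satisfies $T \geq \omega_A$ and has analytic singularities precisely along $\mathrm{supp}(E)$. Since $x \notin \mathrm{supp}(E)$, the current $T$ is smooth at $x$, so $x \in \amp(\alpha)$ and $x \notin E_{nK}(\alpha)$ by Proposition \ref{nkamp}.

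For the reverse inclusion $\mathbb{B}_{+}(\alpha) \subset E_{nK}(\alpha)$, again contrapositively, suppose $x \notin E_{nK}(\alpha)$. Theorem \ref{usefull} and Proposition \ref{nkamp} supply a K\"ahler current $T \in [\alpha]$ with analytic singularities, smooth at $x$, with $T \geq \varepsilon \omega$. Since ample $\mathbb{Q}$-divisors are dense in the ample cone of $\mathrm{NS}(X,\mathbb{R})$, I pick a small ample $\mathbb{R}$-divisor $A_0$ with $D - A_0$ a $\mathbb{Q}$-divisor and a smooth K\"ahler form $\omega_{A_0} \in c_1(A_0)$ chosen so that $T' := T - \omega_{A_0} \geq \tfrac{\varepsilon}{2}\omega$. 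Then $T'$ is a K\"ahler current in the rational class $[D - A_0]$, still smooth at $x$, with analytic singularities. Choosing $m \geq 1$ so that $L := \mathcal{O}_X(m(D - A_0))$ is a line bundle, $mT'$ is the curvature of a singular Hermitian metric $h$ on $L$ with $c_1(L,h) \geq \tfrac{m\varepsilon}{2}\omega$ and $h$ smooth near $x$. To conclude $x \notin \mathbb{B}_{+}(D)$ it now suffices to produce some $k \geq 1$ and a section $\sigma \in H^0(X, L^{\otimes k})$ with $\sigma(x) \neq 0$.

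The principal obstacle is exactly this final passage from analytic data to a genuine algebraic section. The cleanest route is Demailly's algebraic approximation theorem \cite{Dem92}, which approximates $h$ by Hermitian metrics $h_k = (\sum_j |g_j^{(k)}|^2)^{-1/k}$ built from a basis $\{g_j^{(k)}\}_j$ of $H^0(X, L^{\otimes k})$, with the Lelong numbers of the associated currents dominated by $\nu(T',\cdot)$ up to an error $O(1/k)$. Since $T'$ has vanishing Lelong number at $x$, the analytic singular locus of $h_k$ (the common zero set of the $g_j^{(k)}$) cannot contain $x$ for large $k$, producing the required section. An equivalent alternative is to apply Nadel vanishing to the strictly positively curved $h^k$ together with an Ohsawa-Takegoshi extension from $\{x\}$ to remove the adjoint twist by $K_X$. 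Both approaches rely essentially on the projectivity of $X$ and the rationality of $[D - A_0]$, which is why the reduction to the $\mathbb{Q}$-divisor case is the key preparatory step.
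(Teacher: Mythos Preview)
Your proof is correct and follows essentially the same strategy as the paper, with two differences worth noting. For the inclusion $E_{nK}(\alpha)\subset\mathbb{B}_+(\alpha)$ you invoke the Kodaira-decomposition description of $\mathbb{B}_+$ and take $T=\omega_A+[E]$, whereas the paper works directly from the stable base locus and builds the K\"ahler current from a full basis of sections of $|m(D-A)|$; your route is slightly cleaner but entirely equivalent. For the harder inclusion $\mathbb{B}_+(\alpha)\subset E_{nK}(\alpha)$, the paper carries out explicitly what you sketch as the ``Nadel plus Ohsawa--Takegoshi'' alternative: it injects a small logarithmic pole at $x$ into $T-\delta\omega$, subtracts a smooth representative of $c_1(K_X)$ to absorb the adjoint twist, and then applies Nadel vanishing to make the restriction $H^0(X,L_m)\to (L_m)_x$ surjective. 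One caution about your primary route: the regularization theorem in \cite{Dem92} is local/analytic and does not literally produce the approximants from bases of $H^0(X,L^{\otimes k})$; obtaining a global algebraic approximation of this form already requires the Nadel/OT machinery (and typically a twist) that you list as the alternative, so in practice both of your options collapse to the paper's argument.
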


\begin{proof}
For the inclusion $\mathbb{B}_{+}(\alpha) \subset E_{nK}(\alpha)$ we essentially follow \cite[Proposition 5.2]{Bou}. If $x\not\in E_{nK}(\alpha)$, then there exists a K\"ahler current $T$ in the class $[\alpha]$ with analytic singularities and smooth in a coordinate neighborhood $U$ of $x$. Choose $A$ an ample divisor and $\delta'>0$ a small constant such that
 $D-\delta' A$ is a $\mathbb{Q}$-divisor. Fix $\omega$ a K\"ahler metric in $c_1(A)$, and choose $0<\delta<\delta'$ small enough so that $T-\delta\omega$ is still a K\"ahler current and $\delta'-\delta\in\mathbb{Q}$, so that $D-\delta A=D-\delta'A+(\delta'-\delta)A$ is a $\mathbb{Q}$-divisor.
Let $\theta$ be a smooth cutoff function supported in $U$ and identically $1$ near $x$, and let
$$\ti{T}=T-\delta\omega+\ve\ddb (\theta\log|z-x|),$$
where $\ve>0$ is small enough so that $\ti{T}$ is a K\"ahler current in $c_1(D-\delta A)$. The Lelong number of $\ti{T}$ at $x$ is $\ve$, and $\ti{T}$ is smooth on $U\backslash\{x\}$. Let $\beta$ be a smooth representative of $-c_1(X)$, and let
$\ti{T}_m=m\ti{T}-\beta$. For $m$ sufficiently large, $\ti{T}_m$ is a K\"ahler current in $c_1(m(D-\delta A)-K_X)$, smooth on $U\backslash\{x\}$ and with Lelong number $m\ve$ at $x$.
If $m$ is sufficiently divisible, then $m(D-\delta A)$ corresponds to a holomorphic line bundle $L_m$. From the definition of Lelong number, it follows easily (cf. Skoda's lemma \cite[Lemma 5.1]{Bou}) that if $m$ is large the subvariety cut out by the multiplier ideal sheaf $\mathcal{I}(\ti{T}_m)$ of $\ti{T}_m$ intersects $U$ in the isolated point $x$. Nadel's vanishing theorem then implies that
$H^1(X, L_m\otimes\mathcal{I}(\ti{T}_m))=0$, and therefore the restriction map
$H^0(X, L_m)\to (L_m)_x$ is surjective. Hence there is a global section of $L_m$ that does not vanish at $x$. This means that $x\in \mathbb{B}_{+}(\alpha)^c$.

To prove the reverse inclusion, take a point $x\in \mathbb{B}_+(\alpha)^c$, so by definition there is an ample $\mathbb{R}$-divisor $A$
such that the stable base locus of $D-A$ does not contain $x$. This means that there is a large integer $m$ such that $mD-mA=c_1(L)$ for some line bundle $L$ and there is an effective $\mathbb{Z}$-divisor $E$ linearly equivalent to $mD-mA$ which does not pass through $x$. Take $s\in H^0(X,L)$ a defining section for $E$, and complete it to a basis $\{s=s_1,s_2,\dots,s_N\}$ of $H^0(X,L)$. Fixing a smooth Hermitian metric $h$ on $L$ with curvature form $R(h)$ we can define a closed positive current $T_L=R(h)+\ddb\log\sum_i |s_i|^2_h\in c_1(L)$ which has analytic singularities and is smooth near $x$. If $\omega$ is a K\"ahler metric in $c_1(A)$, then $\frac{1}{m}T_L+\omega$ is then a K\"ahler current on $X$ in the class $[\alpha]$ with analytic singularities and smooth near $x$, so $x\in \amp(\alpha)^c$, which equals $E_{nK}(\alpha)^c$ by Proposition \ref{nkamp}.
\end{proof}

Let us discuss the structure of $\Null(\alpha)$ when $n=2$, so $X$ is a complex surface and $[\alpha]$ is a nef class with positive self-intersection which is assumed not to be K\"ahler. It is well-known that complex surfaces in class $\mathcal{C}$ are K\"ahler \cite{BHPV}.
In this case, $\Null(\alpha)$ is the union of a family of irreducible curves $C_i$, $1\leq i\leq N$, with the intersection matrix $(C_i\cdot C_j)$ negative definite. Indeed for any real numbers $\lambda_i$ we have $\int_X\alpha^2>0$ and $\int_{\sum_i \lambda_i C_i}\alpha=0$, so by the Hodge index theorem \cite[Corollary IV.2.16]{BHPV} either $\sum_i \lambda_i C_i=0$ or $(\sum_i \lambda_i C_i)^2<0$. This means that the intersection form is negative definite on the linear span of the curves $C_i$, which proves our assertion. Grauert's criterion \cite[Theorem III.2.1]{BHPV} shows that each connected component of $\Null(\alpha)$ can be contracted, so we get a map $\pi:X\to Y$ where $Y$ is a normal complex surface. In general though, $Y$ is not a K\"ahler space.

We conclude this section by proving the easy inclusion in our main Theorem \ref{main}. An alternative proof of this result (in the case when $X$ is K\"ahler) can be found in \cite[Theorem 2.2]{CZ}.
\begin{thm}\label{contain}
Let $X$ be a compact complex manifold in class $\mathcal{C}$, let $\alpha$ be a closed smooth real $(1,1)$-form on $X$ whose class $[\alpha]\in H^{1,1}(X,\mathbb{R})$ is nef with $\int_X \alpha^n>0$. Then
\begin{equation}\label{toprove}
\Null(\alpha)\subset E_{nK}(\alpha).
\end{equation}
\end{thm}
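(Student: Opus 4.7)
The plan is to prove the contrapositive: given a positive-dimensional irreducible analytic subvariety $V\subset X$ with $V\not\subset E_{nK}(\alpha)$, show that $\int_V \alpha^{k}>0$, where $k:=\dim V$. I begin by invoking Theorem \ref{usefull} to produce a K\"ahler current $T=\alpha+\ddb\phi\in[\alpha]$ with analytic singularities such that $E_+(T)=E_{nK}(\alpha)$. Using Hironaka-type resolution in the complex analytic category, I take a modification $\mu\colon\tilde X\to X$ that simultaneously principalizes the coherent ideal defining the singularities of $\phi$ and resolves $V$, so that the strict transform $\tilde V\subset\tilde X$ is smooth and $\mu|_{\tilde V}\colon\tilde V\to V$ is a modification.

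Because $V\not\subset E_+(T)$, the strict transform $\tilde V$ is not contained in the $-\infty$ locus of $\mu^{*}\phi$, so $\tilde\phi:=\mu^{*}\phi|_{\tilde V}$ is a $\mu^{*}\alpha|_{\tilde V}$-PSH function on $\tilde V$ not identically $-\infty$, and
\[
S \;:=\; \mu^{*}\alpha|_{\tilde V}+\ddb\tilde\phi
\]
is a closed positive $(1,1)$-current on $\tilde V$ in the class $\mu^{*}[\alpha]|_{\tilde V}$. Since $V$ is irreducible and $E_+(T)$ is a proper analytic subset of $V$, there is a non-empty open subset $V_0\subset V_{\mathrm{reg}}\setminus E_+(T)$ on which $\mu|_{\tilde V}$ is a biholomorphism, and on $U:=(\mu|_{\tilde V})^{-1}(V_0)$ the current $S$ coincides with the pullback of $T|_{V_0}$. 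As $T|_{V_0}$ is the restriction of a smooth K\"ahler form on $X$ to a complex submanifold, it is itself K\"ahler, and therefore $S|_U$ is a smooth K\"ahler form.

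To extract the cohomological inequality, truncate by setting $\tilde\phi_j:=\max(\tilde\phi,-j)$ and $S_j:=\mu^{*}\alpha|_{\tilde V}+\ddb\tilde\phi_j$. Each $S_j$ is a closed positive current on $\tilde V$ with bounded potential, so Bedford--Taylor theory defines $S_j^{k}$ as a positive measure, and Stokes' theorem on the compact manifold $\tilde V$ (which does not require $\tilde V$ to be K\"ahler, only compact) yields
\[
\int_{\tilde V} S_j^{k} \;=\; \int_{\tilde V}(\mu^{*}\alpha|_{\tilde V})^{k} \;=\; \int_V\alpha^{k}.
\]
On any $U'\Subset U$, $\tilde\phi$ is smooth and bounded on $\overline{U'}$, so $\tilde\phi_j\equiv\tilde\phi$ on $U'$ for $j$ large, and hence $S_j=S$ on $U'$. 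Restricting the positive measure $S_j^{k}$ to $U'$ gives
\[
\int_V\alpha^{k} \;=\; \int_{\tilde V}S_j^{k} \;\geq\; \int_{U'}S^{k} \;>\; 0,
\]
as $S|_{U'}$ is a smooth K\"ahler form, which is the desired inequality.

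The step that requires the most care is the construction of the modification $\mu$: it must simultaneously handle the (possibly singular) subvariety $V$ and the coherent singular ideal of $T$, and must be available in the complex analytic (non-projective) category. Once such a $\mu$ is in place, the rest is formal Bedford--Taylor/Stokes manipulation. A parallel, shorter route would replace the truncation step with an appeal to the theory of non-pluripolar products of Boucksom \cite{BoT}, yielding $\int_V\alpha^k\geq\int_{\tilde V}\langle S^k\rangle\geq\int_{U'}S^k>0$ directly.
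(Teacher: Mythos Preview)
Your overall strategy (pass to a smooth model of $V$, pull back a K\"ahler current with analytic singularities that is smooth on an open piece of $V$, and extract strict positivity of $\int_V\alpha^k$) is exactly the paper's approach. However, your primary argument via truncation has a genuine gap, and your stated alternative is essentially what the paper does but is missing one ingredient.

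\textbf{The gap in the truncation step.} You assert that $S_j:=\mu^{*}\alpha|_{\tilde V}+\ddb\max(\tilde\phi,-j)$ is a closed \emph{positive} current. The maximum of two $\theta$-PSH functions is $\theta$-PSH, but this requires both functions to be $\theta$-PSH. Here $\tilde\phi$ is $\mu^{*}\alpha|_{\tilde V}$-PSH, but the constant $-j$ is $\mu^{*}\alpha|_{\tilde V}$-PSH only if $\mu^{*}\alpha|_{\tilde V}\geq 0$ as a form. Nothing in the hypotheses guarantees this: $\alpha$ is merely a smooth representative of a nef class, and nef classes need not admit semipositive representatives. So $S_j$ may fail to be positive, Bedford--Taylor theory does not apply, and the displayed equality $\int_{\tilde V}S_j^k=\int_V\alpha^k$ is unjustified.

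\textbf{Comparison with the paper and how to repair.} The paper does not resolve all of $X$ or principalize the ideal of $T$; it simply uses Fujiki's theorem that subvarieties of class $\mathcal{C}$ manifolds are themselves in class $\mathcal{C}$ to take a modification $\mu\colon\tilde V\to V$ with $\tilde V$ a compact \emph{K\"ahler} manifold. The K\"ahlerness of $\tilde V$ is the missing ingredient: it is exactly what is needed to invoke \cite[Proposition~1.20]{BEGZ}, which gives $\int_{\tilde V}\langle\mu^*T\rangle^k\leq\int_{\tilde V}(\mu^*\alpha)^k$ for nef classes on compact K\"ahler manifolds. This is precisely your ``parallel, shorter route'', and the paper takes it. Your construction via a resolution $\tilde X\to X$ yields a $\tilde V$ that is in class $\mathcal{C}$ but not obviously K\"ahler, so the BEGZ inequality is not directly available. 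If you instead arrange $\tilde V$ to be K\"ahler (e.g.\ by resolving $V$ alone, or by further modifying your $\tilde V$), then either your non-pluripolar alternative works verbatim, or you can repair the truncation argument by perturbing: for $\tilde\omega$ K\"ahler on $\tilde V$ and $\ve>0$, the form $\mu^*\alpha|_{\tilde V}+\ve\tilde\omega$ lies in a K\"ahler class, hence has a K\"ahler representative against which constants \emph{are} quasi-PSH, and one lets $\ve\to 0$ at the end.
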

\begin{proof}
Fix a point $x\in X$ and an irreducible analytic subvariety $V\subset X$ of dimension $k$, such that $x \in V$, and $\int_{V} \alpha^k =0$.  For the sake of obtaining a contradiction, suppose also that $x \in \amp(\alpha)$ so there exists a K\"ahler current $T \in [\alpha]$ with analytic singularities, smooth in a neighborhood of $x$.
Since $V$ is a subvariety of $X$ which is in $\mathcal{C}$, it follows from \cite{Fu} that $V$ is also in class $\mathcal{C}$, so there is a modification $\mu:\ti{V}\to V$ with $\ti{V}$ a compact K\"ahler manifold of dimension $k$. The class $[\mu^*\alpha]$ is nef and satisfies $\int_{\ti{V}}(\mu^*\alpha)^k=0$, and it contains the closed positive $(1,1)$ current $\mu^*T$ (defined as usual by writing $T=\alpha+\ddb \phi$ and letting $\mu^*T=\mu^*\alpha+\ddb(\phi\circ\mu)$). Furthermore $\mu^*T$ is a smooth K\"ahler metric on a nonempty open set $\ti{U}\subset\ti{V}$ .
By \cite[Proposition 1.6]{BEGZ} the non-pluripolar product $\langle \mu^*T\rangle^k$ is a well-defined closed positive $(k,k)$-current on $\ti{V}$. By \cite[Proposition 1.20]{BEGZ} we have that
$$0\leq \int_{\ti{V}}\langle\mu^*T\rangle^k\leq \int_{\ti{V}}(\mu^*\alpha)^k=0,$$
so $\int_{\ti{V}}\langle\mu^*T\rangle^k=0$. But $\mu^*T$ is a smooth K\"ahler metric on $\ti{U}$, and so
$$0<\int_{\ti{U}} (\mu^*T)^k=\int_{\ti{U}} \langle \mu^*T\rangle^k\leq \int_{\ti{V}}\langle\mu^*T\rangle^k,$$
which is a contradiction.
\end{proof}

\section{Proof of Theorem \ref{main}}\label{sectproof}
In this section we give the proof of Theorem \ref{main}.
The idea is to observe that if there is a point $x \in \Null(\alpha)^{c} \cap E_{nK}(\alpha)$, then there is
an entire analytic subvariety $V\subset E_{nK}(\alpha)$ of positive dimension, for which the restricted class is again nef
and with positive self-intersection.  Moreover, there is a K\"ahler current $R$ in the class $[\alpha]$ with analytic singularities
along $E_{nK}(\alpha)$ which by assumption has positive Lelong numbers at every point of $V$.  Assume for the moment
that $V$ is smooth.  Using the mass concentration technique of Demailly-P\u{a}un \cite{DP}, it follows that $V$
supports a K\"ahler current $T$ in the class $[\alpha|_V]$ with analytic singularities.  If we can use $T$ to produce a K\"ahler current $\ti{T}$ in the class $[\alpha]$ defined on all of $X$ and $\ti{T}|_V$ is smooth near the generic point of $V$, then the theorem follows, since the generic point of $V$
is then contained in $E_{nK}(\alpha)^{c}$, contradicting the definition of $V$. The case
of $V$ singular is similar after passing to a resolution of singularities.

We start with the following lemma, which is the analytic counterpart of \cite[Proposition 1.1]{ELMNP} in the algebraic case.
Note that by definition $\Null(\alpha)$ does not have any isolated points.
\begin{lem}\label{isolated}
Let $X$ be a compact complex manifold in class $\mathcal{C}$ and $[\alpha]$ a real $(1,1)$ class which is nef and with positive self-intersection. Then the analytic set
$E_{nK}(\alpha)$ does not have any isolated points.
\end{lem}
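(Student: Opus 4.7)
The plan is to argue by contradiction. Suppose $x\in E_{nK}(\alpha)$ is an isolated point of this analytic set. I will construct a K\"ahler current $T'\in[\alpha]$ with $\nu(T',x)=0$; this forces $x\notin E_+(T')$, and since $E_{nK}(\alpha)=\bigcap_S E_+(S)$ over all K\"ahler currents $S\in[\alpha]$, this contradicts $x\in E_{nK}(\alpha)$.

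The starting ingredient is Theorem~\ref{usefull}, which furnishes a K\"ahler current $T=\alpha+\ddb\varphi_0\in[\alpha]$ with analytic singularities and $E_+(T)=E_{nK}(\alpha)$. Using that $x$ is isolated, I fix a coordinate ball $V$ centered at $x$ with $\overline V\cap E_{nK}(\alpha)=\{x\}$. Then $\varphi_0$ is smooth on $\overline V\setminus\{x\}$, continuous and bounded on a neighborhood of $\partial V$, and $\varphi_0(y)\to-\infty$ as $y\to x$ since $\nu(T,x)>0$.

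The main step is a regularized-maximum gluing inside $V$. By the local $\ddb$-lemma on the ball I write $\alpha=\ddb\rho$ for a smooth $\rho$, and set $\varphi_1=A|z|^2-\rho$ for a constant $A$ chosen so large that $\alpha+\ddb\varphi_1=A\ddb|z|^2\geq\epsilon_1\omega$ on $V$ for some $\epsilon_1>0$. Next I pick $C>0$ large enough that $\varphi_1-C<\varphi_0-1$ on an annular neighborhood of $\partial V$ inside $V$ (possible since $\varphi_0$ is bounded there while $\varphi_1-C$ can be made arbitrarily negative). On the other hand, because $\varphi_0\to-\infty$ at $x$ while $\varphi_1-C$ is bounded, we automatically have $\varphi_1-C>\varphi_0+1$ on a smaller ball around $x$. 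For a small parameter $\eta>0$ (say $\eta<1$), I define
\[
\tilde\varphi=\begin{cases}\max_\eta(\varphi_0,\varphi_1-C)&\text{on }V,\\ \varphi_0&\text{on }X\setminus V,\end{cases}
\]
where $\max_\eta$ denotes Demailly's regularized maximum \cite{Demb}. The choice of $C$ guarantees $\tilde\varphi\equiv\varphi_0$ in a neighborhood of $\partial V$ (inside $V$), so the two definitions glue smoothly across $\partial V$; while near $x$ the inequality $\varphi_1-C>\varphi_0+\eta$ yields $\tilde\varphi\equiv\varphi_1-C$, which is smooth.

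Since $\max_\eta$ of two $\alpha$-PSH functions is $\alpha$-PSH and preserves positive lower bounds on $\alpha+\ddb$ (standard from \cite{Demb}), the current $T':=\alpha+\ddb\tilde\varphi$ is a K\"ahler current in $[\alpha]$, bounded below globally by $\min(\epsilon_0,\epsilon_1)\,\omega$ where $T\geq\epsilon_0\omega$. Near $x$ we have $T'=\alpha+\ddb\varphi_1=A\ddb|z|^2$, a smooth K\"ahler form, so $\nu(T',x)=0$, delivering the contradiction. The one technical point that has to be checked carefully is that the regularized maximum keeps a uniform positive lower bound on $\alpha+\ddb$ across the transition annulus where neither function strictly dominates the other, but this is a classical property of $\max_\eta$, so no serious obstacle arises.
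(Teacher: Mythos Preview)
Your proof is correct and follows essentially the same approach as the paper: start from the K\"ahler current with analytic singularities supplied by Theorem~\ref{usefull}, build a smooth local $\alpha$-PSH potential on a coordinate ball around the putative isolated point, and glue via the regularized maximum to desingularize at $x$. The only cosmetic difference is that you write $\alpha=\ddb\rho$ locally and take $\varphi_1=A|z|^2-\rho$, whereas the paper simply observes $\alpha+\ddb(A|z|^2)\geq\omega$ for $A$ large; both yield the same conclusion.
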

\begin{proof}
If $x\in E_{nK}(\alpha)$ is an isolated point, then we can find a small open set
 $U \subset X$ which contains the point $x$, such that $U\cap E_{nK}(\alpha)=\{x\}$.
By shrinking $U$ if necessary, we can assume that $U$ has coordinates $z=(z_{1},\dots,z_{n})$.
If $A$ is a sufficiently large constant, then on $U$ we have
 \begin{equation*}
 \alpha +\ddb(A|z|^{2}-C) \geq \omega,
 \end{equation*}
for any constant $C$.
Let $R = \alpha + \ddb F$ be a K\"ahler current in $[\alpha] \in H^{1,1}(X,\mathbb{R})$ which has analytic singularities, which exists thanks to
a theorem of Demailly-P\u{a}un \cite{DP} and Demailly's regularization \cite{Dem92}. Furthermore, we can choose $R$ with the property that
$E_+(R)=E_{nK}(\alpha)$, thanks to Theorem \ref{usefull}, and $R$ is smooth outside of $E_{+}(R)$.
In particular $F$ is smooth near $\de U$, and so we can choose $C$ sufficiently large such that
$\max\{F, A|z|^{2}-C\} = F$ near $\del U$.
Fix a small $\ve>0$ and let $\max_{\ve}$ be the regularized maximum function (see, e.g. \cite[I.5.18]{Demb}).
Then $\ti{F}=\max_{\ve}(F, A|z|^{2}-C)$ equals $F$ near $\de U$ and equals $A|z|^2-C$ near $x$, is smooth on $U$,
and so $\ti{R}=\alpha+\ddb \ti{F}$ is a K\"ahler
current on $X$ in the class $[\alpha]$ with analytic singularities and with
$x \notin E_{+}(\ti{R})$, from which it follows that $x \notin E_{nK}(\alpha)$,
a contradiction.
\end{proof}

\begin{proof}[Proof of Theorem \ref{main}]
Thanks to Theorem \ref{contain}, we only need to show that $E_{nK}(\alpha)\subset \Null(\alpha)$.
For the sake of obtaining a contradiction, assume the result does not hold, and there exists $x \in \Null(\alpha)^{c} \cap E_{nK}(\alpha)$.
Let $V$ be an irreducible component of the analytic set $E_{nK}(\alpha)$ passing through $x$.
Here and in the following whenever we deal with analytic sets, we always consider the underlying reduced variety.
From Lemma \ref{isolated}, we must have $\dim V>0$.
Thus, we have that $V \subset E_{nK}(\alpha)$ is a positive dimensional irreducible analytic subvariety, with $x \in V$.
Moreover, since $x\in \Null(\alpha)^{c}$, we have that $[\alpha|_{V}]$ has positive self-intersection, and if $V$ is smooth then
$[\alpha|_{V}]$ is also nef (this is true also for $V$ singular, provided one properly defines nef classes on singular varieties, which we will not need to do). We will need the following theorem:

\begin{thm}\label{extend}
Let $X$ be a compact complex manifold in class $\mathcal{C}$ and $\alpha$ a closed smooth real $(1,1)$ form on $X$ with $[\alpha]$ nef and $\int_X\alpha^n>0$.
Let $E=V\cup\bigcup_{i=1}^I Y_i$ be an analytic subvariety of $X$, with $V, Y_i$ its irreducible components, and $V$ a positive dimensional compact complex submanifold of $X$. Let $R=\alpha+\ddb F$ be a K\"ahler current in the class $[\alpha]$ on $X$ with analytic singularities precisely along $E$ and let $T=\alpha|_{V} +\ddb\phi$ be a K\"ahler current in the class $[\alpha|_V]$ on $V$ with analytic singularities.
Then there exists a K\"ahler current $\ti{T}=\alpha+\ddb\Phi$ in the class $[\alpha]$ on $X$ with $\ti{T}|_V$ smooth in a neighborhood of the generic point of $V$.
\end{thm}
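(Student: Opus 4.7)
The plan is to construct $\Phi$ on $X$ by a gluing argument: in a deeper tubular neighborhood of $V$, $\Phi$ will agree (up to a constant) with a local extension of $\phi$ from $V$ to a neighborhood; elsewhere $\Phi$ will agree with the global K\"ahler potential $F$. The gluing will be performed using Demailly's regularized max function $\max_\ve$, which preserves the $\alpha$-plurisubharmonic property and equals the larger of two potentials away from their crossing locus.

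First I would produce a local extension $\ti\phi$ of $\phi$ to a tubular neighborhood $U$ of $V$. Since $T=\alpha|_V+\ddb\phi$ has analytic singularities, on each coordinate chart of $V$ one can write $\phi=\delta\log\sum_j|f_j|^2+g$ with $f_j$ holomorphic on $V$ and $g$ smooth. Choosing holomorphic extensions $\tilde f_j$ of $f_j$ and a smooth extension $\tilde g$ of $g$ in normal coordinates $z'$, one sets $\ti\phi=\delta\log\sum_j|\tilde f_j|^2+\tilde g$ and patches these local choices. The second, and main, step is to check that $\alpha+\ddb\ti\phi$ is a K\"ahler current on $U$, possibly after adding a smooth correction like $A|z'|^2$ with $A$ large to soak up bounded mixed Hessian contributions from the normal extension. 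This is where resolution of singularities enters: by taking an embedded resolution of the singularity ideal of the $\tilde f_j$, the analysis reduces to a simple normal crossings model where the negative contributions of $\ddb\log\sum|\tilde f_j|^2$ in the normal directions can be estimated explicitly and absorbed into the tangential positivity inherited from the K\"ahler-current property of $T$ on $V$.

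Next I would carry out the gluing. Because $F$ has analytic singularities along $E\supset V$, $F$ tends to $-\infty$ everywhere along $V$, while $\ti\phi|_V=\phi$ is only singular on the proper analytic subset $E_+(T)\subset V$. Consequently, for $M$ sufficiently large, $F<\ti\phi-M$ inside a smaller tubular neighborhood $U'\Subset U$, and $F>\ti\phi-M$ near $\partial U$. Setting
\[
\Phi:=\max{}_\ve\bigl(F,\,\ti\phi-M\bigr)\ \text{on }U,\qquad \Phi:=F\ \text{on }X\setminus U,
\]
where the two definitions agree in the overlap by the inequality near $\partial U$, yields a globally defined $\alpha$-PSH function. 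Since $\alpha+\ddb F\ge \ve\omega$ and $\alpha+\ddb\ti\phi\ge\ve\omega$ on their respective domains, $\alpha+\ddb\Phi$ is a K\"ahler current. On a neighborhood of any point $x\in V\setminus E_+(T)$, $\ti\phi-M$ is smooth and strictly exceeds $F$, so the regularized max equals $\ti\phi-M$ there; consequently $\ti T|_V$ is smooth on $V\setminus E_+(T)$, i.e.\ on a nonempty Zariski open subset containing the generic point of $V$.

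The main obstacle, as emphasized in the introduction, is the second step: proving that the normal extension $\ti\phi$ yields a positive current $\alpha+\ddb\ti\phi\ge\ve\omega$ on $U$ with quantitative control up to the singular locus. The delicate estimates, obtained via resolution of singularities and inspired by the techniques of \cite{CGP,PSS,PS,PS2}, are what make the gluing inequalities $F\lessgtr\ti\phi-M$ rigorous near the common singularity locus $E_+(T)\subset V$ and allow the regularized max to produce an honest global K\"ahler current.
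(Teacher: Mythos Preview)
Your overall architecture—extend $\phi$ to a neighborhood of $V$ and then glue to $F$—matches the paper, but two steps you treat as routine are the heart of the argument and do not go through as written. First, the phrase ``patches these local choices'' hides the main difficulty: on overlapping charts your local extensions $\tilde\phi_i,\tilde\phi_j$ are built from different holomorphic extensions of the $f_j$, their difference is not continuous (both are $-\infty$, on possibly different sets), and neither an ordinary nor a regularized max produces a well-defined $\alpha$-PSH function. The paper resolves this by first passing to a modification $\pi:\tilde X\to X$ on which the ideal sheaf $\mathcal I_T$ pulls back to an SNC divisor $\sum a_\ell D_\ell|_{\tilde V}$, with the $D_\ell$ ambient divisors on $\tilde X$ having normal crossings with $\tilde V$. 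The local extensions on $\tilde X$ then all have singularities of the identical form $c\log\prod_k|z_{i_k}|^{2\alpha_{i_k}}$, so their pairwise differences are continuous and Richberg's gluing applies. Resolution is not merely a device for checking positivity; it is what makes the patching well-defined.

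Second, your inequality $F>\tilde\phi-M$ near $\partial U$ fails in general: $F$ is $-\infty$ along each component $Y_i$ of $E$, and these may meet $\partial U$; no choice of $M$ repairs this. Even along the extensions of $E_+(T)$ the Lelong numbers of $F$ and $\tilde\phi$ are unrelated, so neither inequality $F\lessgtr\tilde\phi-M$ is automatic there. The paper handles this by (i) adding to the local potential a term singular precisely along $\bigcup_i\tilde Y_i$, so both sides acquire the same polar set near $\partial W$, and (ii) exploiting nefness of $[\alpha]$ to replace the global potential by $\nu\tilde F+(1-\nu)\rho_\nu$ with $\nu$ small, thereby shrinking its Lelong numbers along the exceptional divisors until they are dominated by those of the local potential. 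Only after both modifications does the comparison hold on a full neighborhood of $\partial W$, and the nef hypothesis is used essentially at this point, not just to produce $R$.
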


Let us complete the proof of Theorem \ref{main} assuming this result.
Assume first that $V$ is smooth. Then by Demailly-P\u{a}un \cite{DP} (see Theorem \ref{usefull}) there exists a K\"ahler current $T=\alpha|_V+\ddb\phi \in [\alpha|_{V}]$. Furthermore thanks to Demailly's regularization \cite{Dem92}, we can choose $T$ to have analytic singularities, so in particular $T$ is smooth at the generic point of $V$.
We also write
\begin{equation*}
E_{nK}(\alpha) = V \cup\bigcup_{i=1}^I Y_{i},
\end{equation*}
for distinct irreducible components $Y_{i}$ of $E_{nK}(\alpha)$, which are all positive dimensional thanks to Lemma \ref{isolated}.

We then apply Theorem \ref{extend} to $E=E_{nK}(\alpha)$, with the current $R$ constructed as in the proof of Lemma \ref{isolated}, and we conclude that
there is a K\"ahler current  $\ti{T}=\alpha+\ddb\Phi$ in the class $[\alpha]$ on $X$ with $\ti{T}|_V$ smooth in a neighborhood of the generic point of $V$. In particular, for a generic $x\in V$ we have
$\nu(\ti{T}|_V,x)=0$. But we always have that the Lelong number of $\ti{T}$ at $x$ calculated in the ambient space $X$ satisfies $\nu(\ti{T},x)\leq \nu(\ti{T}|_V,x)$, hence $\ti{T}$ has Lelong number zero at a generic point of $V$, and so $V\nsubseteq E_{+}(\ti{T})$.
This immediately leads to a contradiction in this case, because by definition
\begin{equation*}
V\subset  E_{nK}(\alpha)\subset E_{+}(\ti{T}).
\end{equation*}

If $V$ is not smooth we take $\pi:\tilde{X} \rightarrow X$ be an embedded resolution of singularities
of $V$, obtained by blowing up smooth centers (which exists thanks to work of Hironaka \cite{Hi}), and let $\tilde{V}$ be the proper transform of $V$, which is now
a smooth submanifold of $\tilde{X}$. The manifold $\ti{X}$ is in class $\mathcal{C}$, and $\pi:\tilde{V} \rightarrow V$ is an isomorphism over $V_{reg}$.
Note that since $\pi$ is bimeromorphic, we have $\pi_*\pi^*\alpha=\alpha$ as currents. Therefore, if $T$ is a K\"ahler current in the class $[\pi^*\alpha]$ on $\ti{X}$ then
its pushforward $\pi_*T$ is a K\"ahler current in the class $[\alpha]$ on $X$.

The class $[(\pi^{*}\alpha)|_{\ti{V}}]$ is nef and has positive self-intersection on $\tilde{V}$, and hence by Demailly-P\u{a}un \cite[Theorem 2.12]{DP} and Demailly's regularization
there exists a K\"ahler current $T \in [(\pi^{*}\alpha)|_{\ti{V}}]$ with analytic singularities. Furthermore, the class $[\pi^*\alpha]$ is nef and has positive self-intersection on $\ti{X}$,
so by the same reason there exists a K\"ahler current $R\in [\pi^*\alpha]$ with analytic singularities along an analytic subset $E$ of $\ti{X}$.
We must have that $\ti{V}\subset E$, because if not then $R$ is smooth near the generic point of $\ti{V}$, and
then the pushforward $\pi_*R$ would be a K\"ahler current in the class $[\alpha]$ on $X$ which is smooth at the
generic point of $V$, which contradicts $V\subset E_{nK}(\alpha)$.

We can then apply Theorem \ref{extend} and get a K\"ahler current $\ti{T}$ on $\tilde{X}$ in the class $[\pi^*\alpha]$
with $\ti{T}|_{\ti{V}}$ smooth near the generic point of $\tilde{V}$.
Therefore $\pi_{*}\ti{T}$ is then a K\"ahler current on $X$ in the class $[\alpha]$ whose restriction to $V$ is smooth near the
generic point of $V$ (note that, in general, it may be the case that $\pi_{*}\ti{T}$ has non-vanishing Lelong numbers along $V_{sing}$,
which is however a proper subvariety of $V$). Again, we have a contradiction, which proves our main theorem.
\end{proof}

We now turn to the proof of Theorem \ref{extend}.
The proof is somewhat technical, so we briefly outline the main steps.  First, we perform a resolution of singularities to replace the singularities of $T$ with a divisor with simple normal crossings on the proper transform $\ti{V}$ of $V$, which is the restriction of a simple normal crossings divisor on the ambient space. We then construct local extensions on a finite open covering of $\ti{V}$, which have the same singularities on any overlap. This is a crucial point, which allows us to
use a technique of Richberg \cite{Rich} to glue the local potentials to construct a K\"ahler current in a neighborhood of $\ti{V}$. Lastly, after possibly reducing the Lelong numbers
of the global K\"ahler current $R$, we can glue the K\"ahler current on this neighborhood to $R$, to obtain a global K\"ahler current.

\begin{proof}[Proof of Theorem \ref{extend}]
Let $\mathcal{I}_T\subset\mathcal{O}_V$ be the ideal sheaf defining the analytic singularities of the K\"ahler current $T$ on $V$.
Thanks to the fundamental work of Hironaka \cite{Hi} there is a principalization $\pi_1:\tilde{V}_1 \rightarrow V$ of $\mathcal{I}_T$ which is obtained by a finite sequence of blowups along smooth centers. The center of the first blowup $p_1:V_1\to V$ is a submanifold $Z$ of $V$, hence a submanifold of $X$, and the blowup $q_1:X_1\to X$ of $X$ along $Z$ has the property that the proper transform $\ti{V}_1$ of $V$ is smooth and isomorphic to $V_1$ and the restriction of $q_1$ to $\ti{V}_1$ composed with this isomorphism equals $p_1$. Furthermore, the exceptional divisor $E$ of $q_1$ is in normal crossings with $\ti{V}_1$ and $E\cap \ti{V}_1$ is identified with the exceptional divisor of $p_1$. We can then repeat this procedure, and obtain a birational morphism $\pi:\ti{X}\to X$ (with $\ti{X}$ smooth), which is a composition of blowups with smooth centers, such that the proper transform $\ti{V}$ of $V$ is smooth and
\begin{equation}\label{ideal}
(\pi|_{\ti{V}})^{*}\mathcal{I}_T = \mathcal{O}_{\ti{V}}\left(-\sum_{\ell}a_\ell D_{\ell}|_{\ti{V}}\right),
\end{equation}
where $a_\ell>0$, where $\cup_\ell D_\ell$ is a union of smooth divisors with simple normal crossings in $\ti{X}$, which furthermore has normal crossings with $\tilde{V}$.
This means that $\ti{V}$ is a smooth submanifold of $\ti{X}$, which can be covered by finitely many charts $\{W_j\}_{1\leq j\leq N}$ such that on each $W_j$ there are local coordinates
$(z_1,\dots,z_n)$ with $\ti{V}\cap W_j=\{z_1=\dots=z_{n-k}=0\}$, where $k=\dim V$, and with $\cup_\ell D_\ell\cap W_j=\{z_{i_1}\cdots z_{i_p}=0\}, n-k<i_1,\dots,i_p\leq n$ (see e.g. \cite[Theorem 2.0.2]{Wl}, \cite{Hi}).
Write $z=(z_1,\dots,z_{n-k})$ and $z'=(z_{n-k+1},\dots,z_n)$. Since the variety cut out by $\mathcal{I}_T$ has codimension at least $2$ in $X$, we have that all divisors $D_\ell$ are $\pi$-exceptional, and we may also assume
that these are all the $\pi$-exceptional divisors.

Choose $\ve>0$ small enough so that $$T= \alpha|_{V} +\ddb\phi \geq 3\epsilon \omega|_{V},$$ holds as currents on $V$. Pulling back to $\ti{X}$ we obtain
$$\pi^*\alpha|_{\ti{V}}+\ddb (\vp\circ\pi)\geq 3\ve\pi^*\omega|_{\ti{V}}.$$
Let $s_D$ be a defining section of $\mathcal{O}_{\ti{X}}(\sum_\ell D_\ell)$.
The smooth form $\pi^*\omega$ is not positive definite on $\ti{X}$, but there is a small $\delta>0$ such that
$$\pi^*\omega+\delta\ddb\log|s_D|^2_{h_D}\geq \ti{\omega},$$
for some Hermitian metric $\ti{\omega}$ on $\ti{X}$, where $h_D$ is a suitable smooth metric on $\mathcal{O}_{\ti{X}}(\sum_\ell D_\ell)$ (see e.g. \cite[Lemma 6]{PS3} or \cite[Proposition 3.24]{Vo}).
Then
$$\pi^*\alpha|_{\ti{V}}+\ddb(\vp\circ\pi)+\ddb(3\ve\delta \log|s_D|^2_{h_D})|_{\ti{V}}\geq 3\ve\ti{\omega}|_{\ti{V}}.$$
For simplicity of notation, define $\hat{\vp}=(\vp\circ\pi)+3\ve\delta \log|s_D|^2_{h_D},$ which is a function on $\ti{V}$.
Define a function $\vp_j$ on $W_j$ (with analytic singularities) by
\begin{equation}\label{ideal2}
\vp_j(z,z')=(\vp\circ\pi)(0,z')+A|z|^2+3\ve\delta \log|s_D|^2_{h_D},
\end{equation}
where $A>0$ is a constant. If we shrink the $W_j$'s, still preserving the property that $\ti{V}\subset\cup_j W_j$, we can choose $A$ sufficiently large so that
$$\pi^*\alpha+\ddb\vp_j\geq 2\ve\ti{\omega},$$
holds on $W_j$ for all $j$. It will also be useful to fix slightly smaller open sets $W_j'\Subset U_j\Subset W_j$ such that $\cup_j W_j'$ still covers $\ti{V}$.
Note that since $\vp$ is smooth at the generic point of $V$, by construction all functions $\vp_j$ are also smooth in a neighborhood of the generic point of $\ti{V}\cap W_j$.

We wish to glue the functions $\vp_j$ together to produce a K\"ahler current defined in a neighborhood of $\ti{V}$ in $\ti{X}$.
This would be straightforward if the functions $\vp_j$ were continuous, thanks to a procedure of Richberg \cite{Rich},
but in our case the functions $\vp_j$ have poles along $\cup_\ell D_\ell\cap W_j$. However, as we will now show, on any nonempty intersection $W_i\cap W_j$
the differences
$\vp_i-\vp_j$ are continuous, and this is enough to apply the argument of Richberg.

So take two open sets $W_1, W_2$ in the covering, with $W_1\cap W_2\cap \ti{V}$ nonempty. Let $(z^1_1,\dots,z^1_n)$ be the coordinates on $W_1$ as above,
and $(z^2_1,\dots,z^2_n)$ the ones on $W_2$. If none of the divisors $D_\ell$ intersects $W_1\cap W_2$, then $\vp_i-\vp_j$ is clearly smooth there, so
we may assume that at least one of them, $D_\ell$ say, intersects $W_1\cap W_2$. Up to reordering the coordinates, we can write $D_\ell\cap W_1=\{z^1_\ell=0\}$ and $D_\ell\cap W_2=\{z^2_\ell=0\}$.
Therefore on $W_1\cap W_2$ we must have $z^2_\ell=u_\ell\cdot z^1_\ell$, where $u_\ell$ is a never-vanishing holomorphic function.

From the construction of the resolution $\pi$, and thanks to \eqref{ideal} and \eqref{ideal2}, we see that $\vp_1$ has weakly analytic singularities along $\cup_\ell D_\ell\cap W_1$, and the ideal defining its singularities is principal, namely on $W_1$ we have
\begin{equation}\label{expans}
\vp_1=c_1\log\left(\prod_k |z^1_{i_k}|^{2\alpha^1_{i_k}}\right)+g_1,
\end{equation}
where $c_1, \alpha^1_{i_k}$ are constants, $g_1$ is a continuous function, and $\{\prod_k |z^1_{i_k}|^{2\alpha^1_{i_k}}=0\}$ equals $\cup_\ell D_\ell\cap W_1$. A similar formula holds for $\vp_2$ on $W_2$, and since
$\vp_1|_{\ti{V}}=\vp_2|_{\ti{V}}$ and $z^j_{i_k}|_{\ti{V}}\not\equiv 0, j=1,2$, we must have $c_1=c_2$ and $\alpha^1_{i_k}=\alpha^2_{i_k}$.
Thanks to the relations $z^2_{i_k}=u_{i_k}\cdot z^1_{i_k}$ we see that $\vp_1-\vp_2$ is indeed continuous on $W_1\cap W_2$. Furthermore, on $W_1\cap W_2$ the function $\max\{\vp_1,\vp_2\}$ differs from $\vp_1$ and $\vp_2$ by a continuous function, and hence it has weakly analytic singularities of the same type as $\vp_1$ and $\vp_2$, and restricts to $\hat{\vp}$ on $\ti{V}\cap W_1\cap W_2$.

Now, for the convenience of the reader, we recall the gluing argument of Richberg \cite{Rich}.
We start by considering two open sets $W_1, W_2$ in the covering with $W_1'\cap W_2'\cap \ti{V}$ nonempty,
and fix a compact set $K\subset \ti{V}$ with $(W_1'\cup W_2')\cap \ti{V}\subset K\subset (U_1\cup U_2)\cap \ti{V}$.
Let $M_1=K\cap\de U_2, M_2=K\cap\de U_1,$ so that $M_1$ and $M_2$ are disjoint compact subsets of $\ti{V}$.  This setup is depicted in figure~\ref{fig: 3}.
 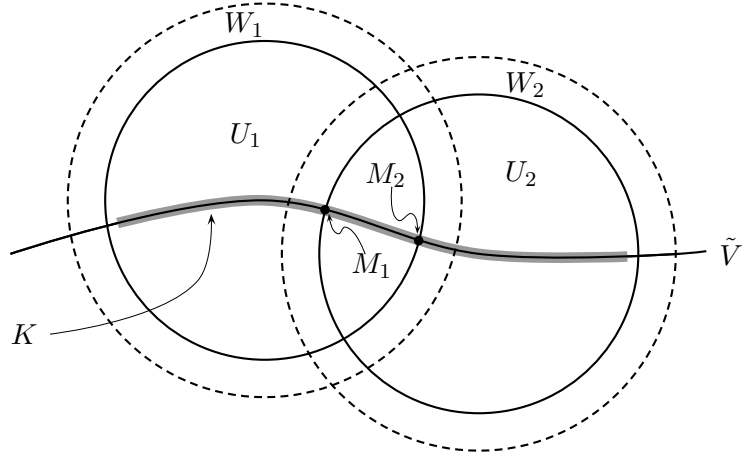
\begin{figure}[h]
\begin{center}
\psset{unit=.007in}
\begin{pspicture}(-250,-260)(250,50)
	\newgray{llgray}{.60}
	\pscurve(-250,-120)(-60, -80)(100,-120)(250,-120)(270,-118)
	\pscurve[linecolor=llgray ,linewidth=4pt](-170,-97)(-60,-80)(100,-120)(211,-122)
	\pscurve(-250,-120)(-60, -80)(100,-120)(250,-120)
	\pscurve[linewidth=.75, arrowsize=3pt]{c->}(-220,-180)(-110,-140)(-100,-90)
	\pscurve[linewidth=.75]{c->}(35,-70)(40,-95)(50,-90)(54,-105)
	\pscurve[linewidth=.75]{c->}(15,-120)(0,-100)(-5,-105)(-12,-90)
	\psellipse(-60,-80)(120,120)
	\psellipse(100,-120)(120,120)
	\psset{linestyle=dashed,dash=3pt 2pt}
	\psellipse(-60,-80)(148,148)
	\psellipse(100,-120)(148,148)
	\uput[100](-70,-50){$U_{1}$}
	\uput[0](110,-60){$U_{2}$}
	\uput[0](270,-118){$\ti{V}$}
	\uput[4](-100,50){$W_{1}$}
	\uput[5](110,5){$W_{2}$}
	\uput[180](-220,-180){$K$}
	\uput[120](45,-80){$M_{2}$}
	\uput[270](20,-110){$M_{1}$}
	\psdots(-15,-87)(55,-110)
\end{pspicture}
\caption{The setup for the local Richberg argument.   }
\label{fig: 3}
\end{center}
\end{figure}

Pick $\theta_1$ a smooth nonnegative cutoff function which is identically $1$ in a neighborhood of $M_2$ in $\ti{X}$ and
$\theta_2$ a smooth nonnegative cutoff function which is identically $1$ in a neighborhood of $M_1$ in $\ti{X}$ so that
the supports of $\theta_1$ and $\theta_2$ are disjoint.
Then, if we choose $\eta>0$ small, the functions
\begin{equation}\label{tilda}
\ti{\vp}_j=\vp_j-\eta\theta_j,
\end{equation}
$j=1,2$ have analytic singularities and satisfy $\pi^*\alpha+\ddb \ti{\vp}_j\geq \ve\ti{\omega}$ on $W_j$.
On $W_1\cap W_2$ we then define
$$\ti{\vp}_0=\max\{\ti{\vp}_1,\ti{\vp}_2\},$$
which has weakly analytic singularities, it satisfies $\pi^*\alpha+\ddb \ti{\vp}_0\geq \ve\ti{\omega}$
and it restricts to $\hat{\vp}$ on $\ti{V}\cap W_1\cap W_2$.
Consider now a neighborhood of $M_2$ in $\ti{X}$, small enough so that $\theta_1=1$ on $M_2$. Since $\vp_1,\vp_2$ agree on $\ti{V}$ and
$\vp_1-\vp_2$ is continuous on this neighborhood, we see that there exists a possibly smaller such neighborhood where
\begin{equation*}
\tilde{\phi}_{1} = \phi_{1} - \eta < \phi_{2} = \tilde{\phi}_{2},
\end{equation*}
so that $\ti{\vp}_0=\ti{\vp}_2$ there. Similarly, on any sufficiently small neighborhood of $M_1$ we have $\ti{\vp}_0=\ti{\vp}_1$.
Therefore there is an open neighborhood $W_0$ of $K\cap \ov{U_1}\cap \ov{U_2}$ in $\ti{X}$ such that
$\ti{\vp}_0=\ti{\vp}_1$ on $W_0\backslash \ov{U_2}$ and $\ti{\vp}_0=\ti{\vp}_2$ on $W_0\backslash \ov{U_1}$.
Therefore we can define
$$W'=W_0\cup (U_1\backslash \ov{U_2})\cup (U_2\backslash \ov{U_1}),$$
which is a neighborhood of $K$ in $\ti{X}$,
and define a function
$\vp'$ on $W'$ to be equal to $\ti{\vp}_0$ on $W_0$, equal to $\ti{\vp}_1$ on $U_1\backslash \ov{U_2}$ and equal to $\ti{\vp}_2$ on $U_2\backslash \ov{U_1}$.
Then $\vp'$ satisfies $\pi^*\alpha+\ddb\vp'\geq \ve\ti{\omega}$, it has weakly analytic singularities of the same type as $\vp_1$ and $\vp_2$,
and restricts to $\hat{\vp}$ on $W'\cap \ti{V}$. Clearly, $W'$ contains
$(W_1'\cup W_2')\cap \ti{V}$.

We then replace $W_1$ and $W_2$ with $W'$, replace $\vp_1$ and $\vp_2$ with $\vp'$, and repeat the same procedure with two other open sets in this new covering.
After at most $N$ such steps, we end up with an open neighborhood $W$ of $\ti{V}$ in $\ti{X}$ with a function $\vp''$ defined on $W$ which satisfies
$\pi^*\alpha+\ddb\vp''\geq\ve'\ti{\omega}$ for some $\ve'>0$, it has weakly analytic singularities,
and it restricts to $\hat{\vp}$ on $\ti{V}$. This ends the gluing procedure of Richberg.

Now we have a K\"ahler current $\pi^*\alpha+\ddb\vp''$ defined on $W$, with $\phi''=-\infty$ on all the divisors $D_\ell$ which intersect $W$,
and the generic Lelong number of $\phi''$ along each such divisor is strictly positive. On the other hand we also have the function $F\circ\pi$ on $\ti{X}$, singular along $\cup_\ell D_\ell\cup\cup_i \ti{Y}_i\cup\ti{V}$, where $\ti{Y}_i$ is the proper transform of $Y_i$ in $\ti{X}$, and
 which
satisfies $\pi^*\alpha+\ddb(F\circ\pi)\geq \gamma \pi^*\omega$ for some small $\gamma>0$. Hence
$$\pi^*\alpha+\ddb(F\circ\pi+\gamma\delta\log|s_D|^2_{h_D})\geq \gamma \ti{\omega},$$
is a K\"ahler current. Let $\ti{F}=F\circ\pi+\gamma\delta\log|s_D|^2_{h_D}$.
Since the class $[\pi^*\alpha]$ is nef, for any small $\nu>0$ there exists a smooth function $\rho_\nu$ on $\ti{X}$ such that
$$\alpha+\ddb\rho_\nu\geq -\nu\gamma\ti{\omega},$$
holds on $\ti{X}$, and we can normalize $\rho_\nu$ so that it is strictly positive.
Then we have
 \begin{equation*}
 \pi^*\alpha + \ddb \left(\nu \ti{F} + (1-\nu)\rho_{\nu}\right) \geq \nu^{2}\gamma \omega.
 \end{equation*}
Let us shrink $W$ slightly, so that $\vp''$ is defined on an open set containing $W$ in its interior. Thanks to \cite[Lemma 2.1]{DP}, we can find an $A_0\ti{\omega}$-PSH function $\psi$ on $\ti{X}$, for some $A_0>0$, with analytic singularities along $\cup_i\ti{Y}_i$. We then let $\phi'''=\phi''+\frac{\ve'}{2A_0}\psi$, so that $\pi^*\alpha+\ddb\phi'''\geq \frac{\ve'}{2}\ti{\omega}$ on $W$ and $\phi'''$ has weakly analytic singularities along $(\cup_\ell D_\ell\cup\cup_i \ti{Y}_i)\cap W$.

Pick any point $x\in (\cup_\ell D_\ell\cup \cup_i\ti{Y}_i)\cap\de W$, and fix a coordinate neighborhood $U$ of $x$ where $\vp'''$ is defined, with $U\cap\ti{V}=\emptyset$.
We can then take a principalization $\hat{\pi}:\hat{U}\to U$ of the ideal sheaves of the singularities of $\phi'''$ and $\ti{F}$ on $U$, obtained by a composition of smooth blowups, so that
the pullbacks of these ideal sheaves to $\hat{U}$ become principal with simple normal crossings support.
Therefore $\hat{U}$ is covered by local coordinates as before, where we can write
$$\hat{\pi}^*\vp'''=c_1\log\left(\prod_k |z_{i_k}|^{2\alpha_{i_k}}\right)+g_1,$$
$$\hat{\pi}^*\ti{F}=c_2\log\left(\prod_k |z_{i_k}|^{2\beta_{i_k}}\right)+g_2,$$
where $c_1,c_2, \alpha_{i_k},\beta_{i_k}$ are constants, $g_1,g_2$ are continuous functions.
Note that crucially the ideal sheaves which define the singularities of $\phi'''$ and $\ti{F}$ have the same support on $U$, which implies that $\alpha_{i_k}>0$ iff $\beta_{i_k}>0$. Therefore, if $\nu>0$ is sufficiently small, we will have that $\nu \ti{F}+(1-\nu)\rho_{\nu}>\phi'''$ on a possibly smaller open neighborhood of $x$.

Repeating this argument on a finite covering of $(\cup_\ell D_\ell\cup \cup_i\ti{Y}_i)\cap \de W$, we can find $\nu>0$ and a neighborhood
$Z$ of $\cup_\ell D_\ell\cup \cup_i\ti{Y}_i$ such that $\nu \ti{F}+(1-\nu)\rho_{\nu}>\phi'''$ on $Z\cap \de W$.
Since $\ti{F}>-\infty$ on $(\de W)\backslash Z$, we can choose $A> 0$ large enough such that on
$(\de W)\backslash Z$ we have $\nu \ti{F}+(1-\nu)\rho_{\nu}>\phi'''-A$. Altogether, this means that $\nu \ti{F}+(1-\nu)\rho_{\nu}>\phi'''-A$
holds in a whole neighborhood of $\de W$.

Therefore we can finally define
\begin{displaymath}
   \ti{\Phi} = \left\{
     \begin{array}{ll}
     \max\{ \phi''',  \nu \ti{F} + (1-\nu)\rho_{\nu}+A\} &  \text{ on }W\\
       \nu F + (1-\nu)\rho_{\nu}+A, &  \text{ on } \ti{X}\backslash W,
     \end{array}
   \right.
\end{displaymath}
which is defined on the whole of $\ti{X}$ and satisfies $\ti{T}_1=\pi^*\alpha+\ddb\ti{\Phi}\geq \ve''\ti{\omega}$ for some $\ve''>0$. Furthermore, since $\ti{F}=-\infty$ on $\ti{V}$, while $\vp'''$ is continuous near the generic point of $\ti{V}$, we see that $\ti{\Phi}=\vp'''$ in a neighborhood of the generic point of $\ti{V}$. Therefore, $\ti{\Phi}|_{\ti{V}}$ is smooth in a neighborhood (in $\ti{V}$) of the generic point of $\ti{V}$.

The pushforward $\ti{T}=\pi_*\ti{T}_1$ is then a K\"ahler current on $X$ in the class $[\alpha]$ whose restriction to $V$ is smooth in a neighborhood (in $V$) of the generic point
of $V$.
This completes the proof of Theorem \ref{extend}.
 \end{proof}

\section{Finite time singularities of the K\"ahler-Ricci flow}\label{sectkrf}
In this section we study finite time singularities of the K\"ahler-Ricci flow, and give the proof of Theorem \ref{finitet}.

Let $(X, \omega_{0})$ be a compact K\"ahler manifold of dimension $n$, and consider K\"ahler metrics $\omega(t)$ evolving under the K\"ahler-Ricci flow \eqref{KRF} with initial metric $\omega_{0}$.
Suppose that the maximal existence time of the flow is $T<\infty$. Then the class $[\alpha] = [\omega_{0}]-Tc_{1}(X)$ is nef, but not K\"ahler.  Moreover, assume that $\int_X\alpha^n>0$, so the volume of $(X,\omega(t))$ does not go to zero as $t\to T^-$.  Let $\alpha$ be a smooth representative of this limiting class, and set
\begin{equation*}
\begin{aligned}
&\hat{\omega}_{t} = \frac{1}{T}\left((T-t)\omega_{0} + t\alpha\right) \in [\omega_{0}]-tc_{1}(X)\\
& \chi = \ddt \hat{\omega}_{t} = \frac{1}{T}( \alpha - \omega_{0}) \in -c_{1}(X).
\end{aligned}
\end{equation*}
Let $\Omega$ be a volume form so that $\ddb \log \Omega = \chi$, and $\int_{X}\Omega = \int_{X}\omega_{0}^{n}$.  Then the K\"ahler-Ricci flow can be written as the parabolic complex Monge-Amp\`ere equation
\begin{equation*}
\ddt \vp = \log \frac{ \left( \hat{\omega}_{t}+ \ddb \vp \right)^{n}}{\Omega}, \qquad \hat{\omega}_{t}+ \ddb \vp>0, \qquad \vp(0)=0,
\end{equation*}
so that
\begin{equation*}
\omega(t) = \hat{\omega}_{t}+ \ddb \vp,
\end{equation*}
solves \eqref{KRF}.
Since the limiting class is nef and has positive self-intersection, Theorem \ref{usefull} shows that there exists a K\"ahler current
\begin{equation}\label{eq: epsilon def}
R = \alpha + \ddb \psi \geq \ve \omega_0,
\end{equation}
which has analytic singularities, and such that
\begin{equation*}
\{ \psi = -\infty\} = E_{+}(R) = E_{nK}(\alpha).
\end{equation*}
By subtracting a constant to $\psi$, we can assume that $\sup_X\psi\leq 0$. Recall from \cite{Bou} that an $\alpha$-PSH function $\vp$ has said to have minimal singularities if for every $\alpha$-PSH function $\eta$ there is a constant $C$ such that $\vp\geq \eta-C$ on $X$. Then we have

\begin{thm}\label{main2}
There is a closed positive real $(1,1)$ current $\omega_T$ on $X$ in the class $[\alpha]$, which is smooth precisely away from $E_{nK}(\alpha)$ and has minimal singularities, such that as $t\to T^-$
we have that $\omega(t)$ converges to $\omega_T$ in $C^\infty_{\mathrm{loc}}(X\backslash E_{nK}(\alpha))$ as well as currents on $X$.
\end{thm}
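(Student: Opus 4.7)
The plan is to combine standard parabolic complex Monge-Amp\`ere estimates with the K\"ahler current $R$ from \eqref{eq: epsilon def} as a barrier controlling the degeneration along $E_{nK}(\alpha)$. First I would establish $C^{0}$ control: a uniform upper bound $\sup_{X}\varphi(t)\leq C$ follows from the maximum principle applied to $\varphi-At$ for an appropriate constant $A$. For a partial lower bound I would introduce $u=\varphi-\eta\psi$ for a small constant $\eta>0$ (chosen in terms of the $\varepsilon$ in \eqref{eq: epsilon def}). The reference form $\hat{\omega}_{t}+\eta\,\ddb\psi=\hat{\omega}_{t}-\eta\alpha+\eta R$ is uniformly bounded below by a positive multiple of $\omega_{0}$ in the sense of currents, provided $\eta$ is small and $t$ lies in any interval $[t_{0},T)$ with $t_{0}>0$. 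A maximum principle argument for $u$ then yields $\varphi\geq\eta\psi-C$ on $X\times[0,T)$, which gives $\|\varphi(t)\|_{L^{\infty}(K)}\leq C_{K}$ for every compact $K\Subset X\setminus E_{nK}(\alpha)$.

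Next I would establish higher-order estimates on compact subsets of $X\setminus E_{nK}(\alpha)$. Applying a Chern-Lu / Aubin-Yau second-order estimate to $\log\tr{\omega(t)}{\omega_{0}}-Au$ with $\psi$ serving as an interior barrier should produce $\tr{\omega(t)}{\omega_{0}}\leq Ce^{-\eta\psi}$ globally on $X$, hence $\omega(t)\leq C_{K}\omega_{0}$ on $K\Subset X\setminus E_{nK}(\alpha)$. Combined with the Monge-Amp\`ere equation and the $C^{0}$ bound this gives $\omega(t)^{n}\geq c_{K}\omega_{0}^{n}$ on $K$, so $\omega(t)$ is uniformly equivalent to $\omega_{0}$ on $K$ for $t$ close to $T$. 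Evans-Krylov and parabolic Schauder then yield uniform $C^{\infty}_{\mathrm{loc}}(X\setminus E_{nK}(\alpha))$ bounds on $\omega(t)$. Ascoli provides subsequential smooth limits $\omega_{T}=\alpha+\ddb\varphi_{T}$, and $L^{1}(X)$-compactness of $\alpha$-PSH functions together with the two-sided $C^{0}$ bounds identifies $\varphi_{T}$ as an $\alpha$-PSH function on all of $X$. A monotonicity argument for a suitable normalization of $\varphi$ along the flow, in the spirit of Song-Tian, will show the subsequential limit is unique, giving the full convergence $\omega(t)\to\omega_{T}$ as $t\to T^{-}$, both in $C^{\infty}_{\mathrm{loc}}(X\setminus E_{nK}(\alpha))$ and as currents on $X$.

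For the structural properties of $\omega_{T}$, the lower bound $\varphi\geq\eta\psi-C$ passes to the limit to give $\varphi_{T}\geq\eta\psi-C$. Since Theorem \ref{usefull} allows us to choose $\psi$ so that $E_{+}(R)=E_{nK}(\alpha)$ is the minimal singular set for a K\"ahler current in $[\alpha]$, a Demailly regularization argument shows $\varphi_{T}$ has minimal singularities in the sense of Boucksom. Smoothness of $\omega_{T}$ on $X\setminus E_{nK}(\alpha)$ is supplied by the $C^{\infty}_{\mathrm{loc}}$ convergence. For the opposite inclusion I would argue by contradiction: if $\omega_{T}$ were smooth in an open neighborhood $U$ of some $x_{0}\in E_{nK}(\alpha)$, then using our main Theorem \ref{main} to select an irreducible subvariety $V\subset E_{nK}(\alpha)=\Null(\alpha)$ through $x_{0}$, a Richberg-type gluing of $\omega_{T}$ with the K\"ahler current $R$ near $\partial U$ (as in Lemma \ref{isolated}) would produce a K\"ahler current in $[\alpha]$ smooth at $x_{0}$, contradicting $x_{0}\in E_{nK}(\alpha)$.

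The hardest step will be the second-order estimate: designing a barrier from $\psi$ that absorbs the degeneration of $\omega(t)$ near $E_{nK}(\alpha)$ while remaining compatible with the Chern-Lu inequality on $X$, and establishing the necessary $L^{p}$ control on $\partial_{t}\varphi$ to push the barrier through the maximum principle. The uniqueness of the limit (and hence convergence without passing to subsequences) together with the sharp identification of the singular locus are the other delicate points, both requiring careful comparison arguments using Theorem \ref{usefull} and our main Theorem \ref{main}.
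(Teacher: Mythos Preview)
Your overall strategy---using the potential $\psi$ of the K\"ahler current $R$ as a barrier in parabolic maximum-principle arguments, then bootstrapping to local higher-order estimates---is the same as the paper's. Two steps in your outline, however, do not go through as written.

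First, the two-sided metric bound. You assert that the $C^{0}$ bound on $\varphi$ together with the Monge--Amp\`ere equation yields $\omega(t)^{n}\geq c_{K}\omega_{0}^{n}$ on $K$, but a sup bound on $\varphi$ gives no control on $\dot\varphi=\log(\omega^{n}/\Omega)$. The paper obtains the volume-form lower bound by a \emph{separate} maximum-principle argument: one computes $(\partial_{t}-\Delta)$ applied to $\dot\varphi+A\varphi-A\psi+Bt$ and uses $\hat{\omega}_{t}+\ddb\psi\geq\tfrac{\varepsilon}{2}\omega_{0}$ to force a positive $\tr{\omega}{\omega_{0}}$ term, concluding $\dot\varphi\geq C\psi-C$ and hence $\omega^{n}\geq C^{-1}e^{C\psi}\Omega$. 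Only then does the Aubin--Yau estimate on $\log\tr{\omega_{0}}{\omega}-A\varphi+A\psi$ close up. (Note also that your trace is written in the wrong direction: a bound on $\tr{\omega(t)}{\omega_{0}}$ gives $\omega(t)\geq c\,\omega_{0}$, not $\omega(t)\leq C\omega_{0}$.)

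Second, and more seriously, your argument for minimal singularities is insufficient. The inequality $\varphi_{T}\geq\eta\psi-C$ compares $\varphi_{T}$ only to the fixed function $\psi$, which has analytic singularities along $E_{nK}(\alpha)$. A general $\alpha$-PSH function---for instance the extremal function $V_{\alpha}$---can be strictly less singular than any positive multiple of $\psi$, so $\varphi_{T}\geq\eta\psi-C$ does not imply $\varphi_{T}\geq\eta-C$ for arbitrary $\alpha$-PSH $\eta$. Demailly regularization goes the wrong way here (it makes functions \emph{more} singular, not less). The paper's argument is specific to the flow: for an arbitrary $\alpha$-PSH $\eta$, regularize to $\eta_{\varepsilon}$ with $\alpha+\ddb\eta_{\varepsilon}\geq-\varepsilon\omega_{0}$, and apply the minimum principle to
\[
Q=\bigl(\varphi+(T-t)\dot\varphi+nt\bigr)+\varepsilon\bigl(\varphi-t\dot\varphi+nt\bigr)-\eta_{\varepsilon},
\]
which satisfies $(\partial_{t}-\Delta)Q=\tr{\omega}{(\alpha+\varepsilon\omega_{0}+\ddb\eta_{\varepsilon})}\geq 0$. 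The particular combination $\varphi+(T-t)\dot\varphi$ is what makes the limiting form $\alpha$ (rather than $\hat\omega_{t}$) appear, allowing comparison with an arbitrary $\eta$; your outline has no analogue of this step.
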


Theorem \ref{finitet} follows easily from this result.
\begin{proof}[Proof of Theorem \ref{finitet}]
Theorem \ref{main2} says that no singularities develop on $X\backslash E_{nK}(\alpha)$. Thanks to the main Theorem \ref{main}, $E_{nK}(\alpha)=\Null(\alpha)$. Finally, we show that the metrics $\omega(t)$ must develop singularities everywhere along $\Null(\alpha)$. Indeed, assume that there is a point $x\in\Null(\alpha)$ with an open neighborhood $U$ where the metrics $\omega(t)$ converge smoothly to a limit K\"ahler metric $\omega_U$ on $U$. Since $x\in\Null(\alpha)$, there is an irreducible $k$-dimensional analytic subvariety $V\subset X$ with $x\in V$ and $\int_V \alpha^k=0$. Then
$$\int_V\omega(t)^k\geq \int_{V\cap U}\omega(t)^k\to \int_{V\cap U}\omega_U^k>0,$$
but at the same time $\int_V\omega(t)^k\to \int_V\alpha^k=0$, a contradiction.
\end{proof}
The same proof shows that if $T<\infty$ is a finite time singularity with $\int_X\alpha^n=0$, then singularities develop on the whole of $X$.

As a side remark, we have the following:
\begin{prop} Let $(X^n,\omega_0)$ be a compact K\"ahler manifold with nonnegative Kodaira dimension. Then if the K\"ahler-Ricci flow \eqref{KRF} develops a singularity at a finite time $T$, we must necessarily have that
$$\int_X(\omega_0-Tc_1(X))^n>0,$$  and Theorem \ref{finitet} then applies.
\end{prop}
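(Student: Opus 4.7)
The plan is to produce a K\"ahler current in the limiting class $[\alpha] = [\omega_0] - T c_1(X)$ directly from a pluricanonical section, and then to deduce positivity of $\int_X \alpha^n$ from bigness together with nefness via a non-pluripolar mass comparison.

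Since $\kappa(X) \geq 0$, I would first choose an integer $m \geq 1$ and a nonzero section $s \in H^0(X, K_X^{\otimes m})$. Its zero divisor $D = \mathrm{div}(s)$ is effective with cohomology class $[D] = m\, c_1(K_X) = -m\, c_1(X)$ in $H^{1,1}(X,\mathbb{R})$, so writing $[D]$ also for the current of integration, $\eta := \frac{1}{m}[D]$ is a closed positive $(1,1)$-current on $X$ in the class $-c_1(X)$. Therefore $S := \omega_0 + T\eta$ is a closed positive current in the class $[\omega_0] - T c_1(X) = [\alpha]$, and since $T > 0$ and $\omega_0$ is a smooth K\"ahler metric, $S \geq \omega_0$ holds as currents. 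In particular $S$ is a K\"ahler current in $[\alpha]$, and hence $[\alpha]$ is big.

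To pass from bigness plus nefness to $\int_X \alpha^n > 0$, I would run the same argument that appears at the end of the proof of Theorem \ref{contain}. Using Demailly's regularization \cite{Dem92} I replace $S$ by a K\"ahler current $\tilde S \in [\alpha]$ with analytic singularities along a proper analytic subset $E \subset X$, so that $\tilde S$ is a smooth K\"ahler metric on $X \setminus E$. Then the total non-pluripolar mass satisfies $\int_X \langle \tilde S^n \rangle = \int_{X \setminus E} \tilde S^n > 0$. Since $[\alpha]$ is nef, \cite[Proposition 1.20]{BEGZ} yields $\int_X \langle \tilde S^n \rangle \leq \int_X \alpha^n$, and therefore $\int_X \alpha^n > 0$. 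Theorem \ref{finitet} then applies.

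The main conceptual step is the translation from $\kappa(X) \geq 0$ to the existence of a closed positive current representing $-c_1(X)$, which is immediate via Lelong-Poincar\'e applied to any pluricanonical section; once that is in hand, the construction of the K\"ahler current in $[\alpha]$ is straightforward and the passage from bigness to positive top self-intersection is a direct invocation of the BEGZ inequality that is already used in the paper. No serious obstacle arises.
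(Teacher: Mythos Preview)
Your proof is correct and follows essentially the same approach as the paper: both obtain pseudoeffectivity of $-c_1(X)$ from a pluricanonical section, add $\omega_0$ to produce a K\"ahler current in $[\alpha]$, and then pass from bigness plus nefness to $\int_X\alpha^n>0$. The only difference is cosmetic: for the last step the paper cites Boucksom's volume results \cite[Theorems 4.1 and 4.7]{Bo} (big $\Rightarrow v(\alpha)>0$, nef $\Rightarrow v(\alpha)=\int_X\alpha^n$), whereas you reuse the non-pluripolar mass inequality \cite[Proposition 1.20]{BEGZ} already employed in the proof of Theorem~\ref{contain}; these are interchangeable routes to the same conclusion.
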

\begin{proof}
Indeed, nonnegative Kodaira dimension means that $H^0(X,\ell K_X)\neq 0$ for some $\ell\geq 1$. Since $c_1(K_X)=-c_1(X)$, this
implies that the class $-Tc_1(X)$ is pseudoeffective (i.e. it contains a closed positive current $S$),
and therefore the class $[\alpha]=[\omega_0]-Tc_1(X)$ is big since it contains the K\"ahler current $S+\omega_0$. Then \cite[Theorem 4.7]{Bo} shows that the volume $v(\alpha)$ is strictly positive, and since $[\alpha]$ is also nef we can apply  \cite[Theorem 4.1]{Bo} and conclude that $\int_X\alpha^n=v(\alpha)>0$.
\end{proof}
If $X$ is projective then it is enough to assume that $X$ is not uniruled, using the same argument as above together with the main theorem of \cite{BDPP}.

Let us also remark that when $n=2$ if $[\alpha]=\omega_0-Tc_1(X)$ is the limiting class as above, then $\Null(\alpha)$ is a disjoint union of finitely many $(-1)$-curves (see \cite[Theorem 8.3]{SW}) which can be contracted to give a new K\"ahler surface $Y$ and $[\alpha]$ is the pullback of a K\"ahler class on $Y$. The K\"ahler-Ricci flow can then be restarted on $Y$, and the whole process is continuous in the Gromov-Hausdorff topology thanks to work of Song-Weinkove \cite{SW2}.

Now we start with the proof of Theorem \ref{main2}. We first need several lemmas.
\begin{lem}\label{lem1}
There is a constant $C>0$ such that on $X\times [0,T)$ we have
\begin{enumerate}
\item[(i)]  $\displaystyle{\dot \varphi(t)\leq C}$,
\item[(ii)] $\displaystyle{ \omega^n\leq C\Omega}$,
\item[(iii)]  $\displaystyle{\varphi(t)\leq C}$.
\end{enumerate}
\end{lem}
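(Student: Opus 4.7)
The three inequalities are tightly linked: (i) and (ii) are equivalent via the flow equation $\dot\vp=\log(\omega^n/\Omega)$, and since $\vp(\cdot,0)=0$ one has
$$\vp(x,t)=\int_0^t \dot\vp(x,s)\,ds,$$
so (i) immediately implies (iii). Thus the substance of the lemma is (i), or equivalently (ii).

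To prove (i), I would invoke the classical scalar curvature lower bound along the K\"ahler-Ricci flow. Differentiating the parabolic Monge--Amp\`ere equation in $t$ gives
$$\ddt \dot\vp=\ddt \log\det\nolimits_{\omega_0}\omega=\tr{\omega}{\ddt \omega}=-S,$$
where $S$ denotes the scalar curvature of $\omega(t)$. Along \eqref{KRF} the scalar curvature itself satisfies the standard evolution equation
$$\ddt S=\Delta_\omega S+|\Ric(\omega)|^2\geq \Delta_\omega S,$$
so by Hamilton's maximum-principle trick $\min_X S(t)$ is nondecreasing in $t$, yielding a uniform lower bound $S\geq \min_X S(\cdot,0)=:-C_0$ on $X\times[0,T)$. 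Consequently $\ddt \dot\vp\leq C_0$, and since $\dot\vp(\cdot,0)=\log(\omega_0^n/\Omega)$ is a smooth bounded function on the compact $X$, integrating in time yields $\dot\vp\leq C$ uniformly. This is (i); then (ii) follows from $\omega^n=e^{\dot\vp}\Omega$, and (iii) from the identity of the first paragraph.

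No serious obstacle arises: the scalar-curvature lower bound is a classical consequence of the maximum principle applied to the evolution of $S$, valid on any compact K\"ahler manifold along \eqref{KRF} and independent of the cohomological position of the limit class $[\alpha]$. The bigness of $[\alpha]$ and the K\"ahler current $R=\alpha+\ddb\psi\geq \ve\omega_0$ provided by Theorem \ref{usefull} are not needed for this elementary lemma; they will be used later in the section, in the sharper arguments that identify the singular set of the flow as $E_{nK}(\alpha)=\Null(\alpha)$ and give Theorem \ref{main2}.
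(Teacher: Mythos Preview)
Your proof is correct. The paper itself gives no detailed argument, deferring to \cite[Lemma~7.1]{SW} and merely recording, as you do, that (i) and (ii) are equivalent via $\dot\vp=\log(\omega^n/\Omega)$ and that (iii) follows from (i) by integrating in time. Your route to (i) through the scalar-curvature lower bound---using $\partial_t\dot\vp=-S$ together with $\partial_t S\geq \Delta_\omega S$ and the maximum principle---is a standard and entirely valid way to obtain this estimate. One cosmetic slip: $\dot\vp=\log(\omega^n/\Omega)$ rather than $\log\det_{\omega_0}\omega$, but since $\Omega$ is $t$-independent this does not affect the computation $\partial_t\dot\vp=\tr{\omega}{\partial_t\omega}=-S$.
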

The proof is extremely easy (see \cite[Lemma 7.1]{SW}). Note that (i) is equivalent to (ii), and that integrating (i) from $0$ to $T$ gives (iii).

\begin{lem}\label{lem2}
There is a constant $C>0$ such that on $X\times [0,T)$ we have
\begin{enumerate}
\item[(i)]  $\displaystyle{\dot \varphi(t)\geq C\psi -C}$,
\item[(ii)] $\displaystyle{ \omega^n\geq C^{-1}e^{C\psi}\Omega}$.
\end{enumerate}
\end{lem}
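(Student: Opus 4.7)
The plan is to establish (i) by a parabolic minimum-principle argument applied to a suitable auxiliary function built from $\dot\vp$, $\vp$, and $\psi$. Once (i) is in hand, (ii) is immediate: since $\dot\vp=\log(\omega^n/\Omega)$, the inequality $\dot\vp\geq C\psi-C$ exponentiates to $\omega^n\geq e^{-C}e^{C\psi}\Omega$.

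The key geometric input is that the K\"ahler current hypothesis $R=\alpha+\ddb\psi\geq \ve\omega_0$ of \eqref{eq: epsilon def} gives
\[
\omega'_t \;:=\; \hat\omega_t + \ddb\Bigl(\tfrac{t}{T}\psi\Bigr) \;=\; \tfrac{T-t}{T}\omega_0+\tfrac{t}{T}R \;\geq\; c\,\omega_0
\]
as currents on $X\times[0,T)$, for $c=\min(1,\ve)>0$; in particular $\text{tr}_\omega\omega'_t\geq c\,\text{tr}_\omega\omega_0$. Moreover, Lemma~\ref{lem1}(ii) combined with $\omega_0^n \geq c'\,\Omega$ and the arithmetic-geometric-mean inequality applied to $\omega^{-1}\omega_0$ gives a uniform bound $\text{tr}_\omega\omega_0\geq c_0>0$ on $X\times[0,T)$. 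The relevant evolutions are
\[
\Bigl(\tfrac{\del}{\del t}-\Delta\Bigr)\dot\vp=\text{tr}_\omega\chi, \qquad
\Bigl(\tfrac{\del}{\del t}-\Delta\Bigr)\Bigl(\vp-\tfrac{t}{T}\psi\Bigr)=\dot\vp-n+\text{tr}_\omega\omega'_t-\tfrac{\psi}{T}.
\]

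I would then apply the minimum principle to an auxiliary function of the form
\[
H \;=\; \dot\vp - A\Bigl(\vp-\tfrac{t}{T}\psi\Bigr)+Bt
\]
(or a close variant) with $A,B>0$ chosen sufficiently large depending on $c$, $c_0$, and $\sup_X|\chi|_{\omega_0}$. Since $\psi\equiv-\infty$ precisely on $E_{nK}$ while $\vp, \dot\vp$ are smooth on $X\times[0,T)$, $H$ is smooth on $(X\setminus E_{nK})\times[0,T)$ and equal to $+\infty$ on $E_{nK}\times(0,T)$, so the infimum of $H$ is attained either at $t=0$ (where $H|_{t=0}=\log(\omega_0^n/\Omega)$ is uniformly bounded) or at a smooth interior point. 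At such an interior minimum, $(\tfrac{\del}{\del t}-\Delta)H\leq 0$, and the trace terms in the evolution of $H$, combined with the positivity $A\,\text{tr}_\omega\omega'_t\geq Ac\,c_0$ (which dominates $\text{tr}_\omega\chi$ for $A$ large), together with the upper bounds $\vp\leq C$ and $\dot\vp\leq C$ from Lemma~\ref{lem1}, force a uniform lower bound $H\geq -C'$ on $X\times[0,T)$.

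Unpacking the definition of $H$ and using the upper bound $\vp\leq C$ from Lemma~\ref{lem1}(iii), together with $\tfrac{t}{T}\psi\geq \psi$ (valid since $\psi\leq 0$ and $t/T\in[0,1]$), the estimate $H\geq -C'$ translates into $\dot\vp\geq A\psi-C''$ for some constant, establishing (i). The main technical obstacle is the balancing of constants in $H$: the positive trace $\text{tr}_\omega\omega'_t$, while bounded below by $c\,\text{tr}_\omega\omega_0$, may also blow up near $E_{nK}$, and the choice of test function must be arranged so that the error $\text{tr}_\omega\chi$ is absorbed uniformly across the whole cylinder; the use of $\vp-\tfrac{t}{T}\psi$ in place of $\psi$ alone is what allows the K\"ahler current lower bound for $R$ to enter the evolution inequality in usable form.
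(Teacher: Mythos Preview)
There is a sign error that breaks the minimum-principle argument. With $H=\dot\vp-A(\vp-\tfrac{t}{T}\psi)+Bt$, the coefficient of $\psi$ is $+At/T>0$; since $\psi\to-\infty$ on $E_{nK}(\alpha)$ while $\vp,\dot\vp$ are smooth, this forces $H\to-\infty$ there, not $+\infty$ as you claim. The infimum of $H$ is then $-\infty$ and is never attained at a smooth point, so the argument collapses. The remedy is to flip the sign of $A$: with $H=\dot\vp+A(\vp-\tfrac{t}{T}\psi)+Bt$ one has $H\to+\infty$ on $E_{nK}(\alpha)$, and this is essentially the paper's test function $Q=\dot\vp+A\vp-A\psi+Bt$. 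Your replacement of $\psi$ by $\tfrac{t}{T}\psi$ is in fact a pleasant simplification, since it makes $\omega'_t\geq c\,\omega_0$ hold on all of $[0,T)$ rather than only for $t$ close to $T$ as in \eqref{key}.

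Even with the sign corrected, the sketch at the minimum misses the decisive step. After absorbing $\tr{\omega}{\chi}$ into $A\,\tr{\omega}{\omega'_t}$ (using $\omega'_t\geq c\,\omega_0$ and $|\chi|\leq C\omega_0$, so that $A\,\tr{\omega}{\omega'_t}+\tr{\omega}{\chi}\geq(Ac-C)\,\tr{\omega}{\omega_0}$; note this requires the full trace inequality, not merely the constant $Ac\,c_0$ you quote, since $\tr{\omega}{\chi}$ is only controlled by $\tr{\omega}{\omega_0}$), one is left with
\[
0\;\geq\;\tr{\omega}{\omega_0}+A\dot\vp-An-\tfrac{A}{T}\psi+B.
\]
Here your constant lower bound $\tr{\omega}{\omega_0}\geq c_0$ is useless against $A\dot\vp=A\log(\omega^n/\Omega)$, which is a priori unbounded below---indeed this is exactly the quantity the lemma is meant to bound. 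The paper instead uses the sharper AM--GM inequality $\tr{\omega}{\omega_0}\geq C_0^{-1}(\Omega/\omega^n)^{1/n}$ together with the elementary fact that $y\mapsto C_0^{-1}y^{1/n}-A\log y$ is bounded below on $(0,\infty)$; this merges the two potentially unbounded terms into a uniform constant $-C_1$, after which choosing $B>C_1+An$ produces the contradiction.
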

\begin{proof}
It is enough to prove (i), since (ii) is clearly equivalent to (i).

To prove (i), note that
$$\hat{\omega}_t+\ddb\psi=\frac{1}{T}\left((T-t)(\omega_0+\ddb\psi)+t(\alpha+\ddb \psi) \right).$$
Now we have
$$\omega_0+\ddb\psi\geq \omega_0-\alpha\geq -C\omega_0,\quad \alpha+\ddb \psi=R\geq\ve\omega_0,$$
where these inequalities are in the sense of currents.
Therefore, there is a small $\delta>0$ such that for $t\in [T-\delta,T)$ we have
\begin{equation}\label{key}
\hat{\omega}_t+\ddb\psi \geq \frac{\ve}{2}\omega_0,
\end{equation}
again as currents.
Let $$Q=\dot{\vp}+A\vp-A\psi+Bt,$$
where $A$ and $B$ are large constants to be fixed soon. Our goal is to show that $Q\geq -C$ on $X\times [0,T)$, since then
Lemma \ref{lem1} (iii) implies (i).

We fix the constant $A$ large enough so that
$$(A-1)(\hat{\omega}_t+\ddb\psi)+\chi\geq \omega_0,$$
holds on $[T-\delta,T)$. To fix the value of $B$, we first use the arithmetic-geometric mean inequality to get on $[T-\delta, T)$
\begin{equation}\label{amgm}
\frac{\ve}{2}\tr{\omega}{\omega_0}\geq C_0^{-1}\left(\frac{\Omega}{\omega^n}\right)^{1/n},
\end{equation}
where $C_0$ is a uniform constant.
But since the function $y\mapsto A\log y-C_0^{-1}y^{1/n}$ is bounded above for $y>0$, we have that
\begin{equation}\label{logbd}
C_0^{-1}\left(\frac{\Omega}{\omega^n}\right)^{1/n}+A\log\frac{\omega^n}{\Omega}\geq -C_1,
\end{equation}
for another uniform constant $C_1$. We then set $B=C_1+An$.

From the definition of $Q$ we see that $Q\geq -C$ holds on $X\times [0,T-\delta]$, for a uniform constant $C$. Therefore, to prove that $Q\geq -C$ on $X\times [0,T)$, it suffices to show that given any $T-\delta<T'<T$, the minimum of $Q$ on $X\times [0,T']$ is always achieved
on $[0,T-\delta]$. Let then $(x,t)$ be a point in $X\times (T-\delta,T']$ where $Q$ achieves a minimum on $X\times[0,T']$.
Since $Q$ approaches $+\infty$ along $E_{nK}(\alpha)$, we conclude that $x\notin E_{nK}(\alpha)$. At $(x,t)$ we have
\[\begin{split}
0\geq \left(\frac{\de}{\de t}-\Delta\right)Q&=\tr{\omega}{\chi}+A\dot{\vp}-An+A\tr{\omega}{(\hat{\omega}_t+\ddb\psi)}+B\\
&=\tr{\omega}{(\hat{\omega}_t+\ddb\psi)}+\tr{\omega}{\left((A-1)(\hat{\omega}_t+\ddb\psi)+\chi\right)}\\
&\ \ \ \ +A\log\frac{\omega^n}{\Omega}-An+B\\
&\geq \frac{\ve}{2}\tr{\omega}{\omega_0}+A\log\frac{\omega^n}{\Omega}+\tr{\omega}{\omega_0}-An+B,
\end{split}\]
where $\Delta=\Delta_{\omega(t)}$ is the Laplacian of the moving metric.
Thanks to \eqref{amgm}, \eqref{logbd}, we conclude that
\[\begin{split}
0\geq \left(\frac{\de}{\de t}-\Delta\right)Q&\geq \tr{\omega}{\omega_0}-C_1-An+B=\tr{\omega}{\omega_0}>0,
\end{split}\]
a contradiction.
\end{proof}

Integrating Lemma \ref{lem2} (i) from $0$ to $T$ gives the bound $\varphi(t)\geq C\psi -C.$
We now sharpen this:

\begin{lem}\label{lem3}
There is a constant $C>0$ such that on $X\times [0,T)$ we have
\begin{equation}\label{c0}
\varphi(t)\geq \psi -C.
\end{equation}
\end{lem}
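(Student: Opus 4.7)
The plan is to apply the parabolic maximum principle to the auxiliary function
$$Q := \vp - \psi + A\bigl(t - (T-\delta)\bigr)$$
on $X \times [T-\delta, T']$ for an arbitrary $T' \in [T-\delta, T)$, where $A$ is a large constant to be chosen and $\delta>0$ is the constant from the proof of Lemma~\ref{lem2} for which the inequality \eqref{key} holds. Because $\psi$ has analytic singularities with $\psi \equiv -\infty$ on $E_{nK}(\alpha)$, the function $Q$ is lower semicontinuous and identically $+\infty$ along $E_{nK}(\alpha)$, so any minimum of $Q$ on the compact set $X\times [T-\delta,T']$ is attained at a point where $\psi$ is smooth.

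First I would compute, using $\omega=\hat\omega_t+\ddb\vp$ and $\ddb\psi=R-\alpha$, that
$$(\partial_t-\Delta)(\vp-\psi) = \dot\vp - n + \tr{\omega}{(\hat\omega_t+\ddb\psi)},$$
where $\Delta=\Delta_{\omega(t)}$ denotes the Laplacian of the evolving metric. On $[T-\delta,T)$, the key inequality \eqref{key} gives $\tr{\omega}{(\hat\omega_t+\ddb\psi)} \geq \tfrac{\ve}{2}\tr{\omega}{\omega_0}$, and the arithmetic--geometric mean inequality together with $\omega^n=e^{\dot\vp}\Omega$ yields $\tr{\omega}{\omega_0}\geq C_1 e^{-\dot\vp/n}$ for a uniform $C_1>0$. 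Since the real-valued function $y\mapsto y+C_1 e^{-y/n}$ is bounded below, one obtains $(\partial_t-\Delta)(\vp-\psi)\geq -C_0$ for some uniform $C_0>0$. Choosing $A>C_0$ then forces
$$(\partial_t-\Delta)Q = (\partial_t-\Delta)(\vp-\psi)+A > 0$$
at every point of $X\times(T-\delta,T']$ where $\psi$ is smooth.

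Now I would conclude by the maximum principle: if the minimum of $Q$ on $X\times[T-\delta,T']$ were attained in the parabolic interior $X\times(T-\delta,T']$, it would necessarily occur at a smooth point of $\psi$ and there we would have $\partial_t Q\leq 0$ and $\ddb Q\geq 0$, yielding $(\partial_t-\Delta)Q\leq 0$, a contradiction. Hence the minimum is attained at $t=T-\delta$. Since the flow is smooth on $X\times[0,T-\delta]$, $\vp$ is uniformly bounded there, and combined with $\psi\leq 0$ this gives $Q\geq -C$ at $t=T-\delta$, hence on all of $X\times[T-\delta,T')$. This yields $\vp-\psi\geq -C-A\delta$ on $X\times[T-\delta,T')$, and on $X\times[0,T-\delta]$ the same bound is immediate from boundedness of $\vp$ and $\psi\leq 0$; since $T'<T$ was arbitrary, the lemma follows.

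The only delicate point, and the only real obstacle worth flagging, is the application of the maximum principle despite the singularities of $\psi$; but this is handled cleanly by lower semicontinuity, since $Q\equiv+\infty$ on $E_{nK}(\alpha)$ forces the infimum to be attained where $\psi$ is smooth, and there the classical pointwise argument applies without modification.
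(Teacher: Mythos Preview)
Your proof is correct and follows essentially the same approach as the paper: both apply the parabolic maximum principle to $Q=\vp-\psi+At$ on $[T-\delta,T')$, using the key inequality \eqref{key} and the fact that $\psi=-\infty$ on $E_{nK}(\alpha)$ to force the minimum into the smooth locus. The paper's argument is slightly slicker in that at the minimum it works directly with $\partial_tQ$ and uses $\ddb Q\geq 0$ together with the monotonicity of the Monge--Amp\`ere operator (so $(\hat\omega_t+\ddb\psi+\ddb Q)^n\geq(\hat\omega_t+\ddb\psi)^n\geq(\tfrac{\ve}{2}\omega_0)^n$), rather than computing $(\partial_t-\Delta)Q$ and invoking AM--GM as you do.
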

\begin{proof}
Let $Q=\vp-\psi+At$, for $A$ a large constant to be determined.
We need to show that $Q\geq -C$ on $X\times[0,T)$. Clearly,  $Q\geq -C$  holds on $X\times [0,T-\delta]$ (where $\delta$ is as in Lemma \ref{lem2}), so we
fix $T-\delta<T'<T$ and suppose that $Q$ achieves its minumum on $X\times [0,T']$ at $(x,t)$ with $t\in (T-\delta,T']$.
Since $\psi$ approaches $-\infty$  along $E_{nK}(\alpha)$, we have that $x\in X\backslash E_{nK}(\alpha)$.
At $(x,t)$ we then have
\[\begin{split}
0\geq\frac{\de Q}{\de t}&=\log\frac{(\hat{\omega}_t+\ddb\psi+\ddb Q)^n}{\Omega}+A\\
&\geq \log\frac{(\hat{\omega}_t+\ddb\psi)^n}{\Omega}+A\\
&\geq \log \frac{(\ve\omega_0/2)^n}{\Omega}+A\geq -C+A,
\end{split}\]
using \eqref{key}. If we choose $A>C$, this gives a contradiction.
\end{proof}

\begin{lem}\label{c2}
There is a constant $C>0$ such that on $X\times [0,T)$ we have
\begin{equation}\label{lapl}
\tr{\omega_0}{\omega}\leq Ce^{-C\psi}.
\end{equation}
\end{lem}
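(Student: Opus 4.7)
The plan is to run a parabolic maximum principle argument for the auxiliary function
\[ u = \log \tr{\omega_0}{\omega} - A(\varphi - \psi) \]
on $X \times [0, T')$, with $A > 0$ a large constant to be fixed below. Observe first that $u \equiv -\infty$ on $E_{nK}(\alpha)$, so its supremum on any $X \times [0, T']$ with $T' < T$ is attained at some $(x_0, t_0)$ with $x_0 \in X \setminus E_{nK}(\alpha)$, where $u$ is smooth. By Lemma \ref{lem3}, $-A(\varphi-\psi) \leq AC$ uniformly; since $\tr{\omega_0}{\omega}$ is bounded on the compact set $X \times [0, T-\delta]$ on which $\omega(t)$ remains smooth (here $\delta$ is as in the proof of Lemma \ref{lem2}), $u$ is automatically bounded above on $X \times [0, T-\delta]$, and it suffices to treat the case $t_0 \in (T-\delta, T']$.

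At such an interior maximum I would combine two standard inputs. First, the Aubin-Yau/parabolic Schwarz-type inequality
\[ \left(\ddt - \Delta\right)\log\tr{\omega_0}{\omega} \leq C_0\,\tr{\omega}{\omega_0}, \]
with $C_0$ depending only on an upper bound for the bisectional curvature of $\omega_0$. Second, a direct computation using $\omega = \hat\omega_t + \ddb\varphi$ and $R = \alpha + \ddb\psi$ gives
\[ \left(\ddt - \Delta\right)(\varphi - \psi) = \dot\varphi - n + \tr{\omega}{\hat\omega_t} + \tr{\omega}{R} - \tr{\omega}{\alpha}, \]
which by \eqref{key} (valid for $t \geq T-\delta$) is at least $\dot\varphi - n + \tfrac{\varepsilon}{2}\tr{\omega}{\omega_0}$. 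Subtracting $A$ times the second inequality from the first and choosing $A$ so large that $C_0 - \tfrac{A\varepsilon}{2} \leq -(C_0+1)$, the condition $\left(\ddt - \Delta\right) u \geq 0$ at $(x_0,t_0)$ produces
\[ (C_0+1)\,\tr{\omega}{\omega_0}(x_0,t_0) \leq A\bigl(n - \dot\varphi(x_0,t_0)\bigr). \]

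To close the argument I would apply the inequality between arithmetic and geometric means twice for the eigenvalues of $\omega$ with respect to $\omega_0$. Since $\dot\varphi = \log(\omega^n/\Omega)$ and $\omega^n \geq n^n\omega_0^n/(\tr{\omega}{\omega_0})^n$, we get $n - \dot\varphi \leq C + n\log\tr{\omega}{\omega_0}$, so the previous display reads $(C_0+1) X \leq A(C + n\log X)$ with $X := \tr{\omega}{\omega_0}(x_0,t_0)$, forcing $X \leq C_1$ uniformly because a linear function eventually dominates a logarithm. The complementary inequality $\tr{\omega_0}{\omega} \leq n(\omega^n/\omega_0^n)(\tr{\omega}{\omega_0})^{n-1}$, combined with Lemma \ref{lem1}(ii), then yields $\tr{\omega_0}{\omega}(x_0,t_0) \leq C_2$, and Lemma \ref{lem3} used once more bounds $u(x_0,t_0)$ uniformly from above. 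Thus $u \leq C$ on all of $X \times [0, T)$, which together with Lemma \ref{lem1}(iii) rearranges to $\log\tr{\omega_0}{\omega} \leq -A\psi + C'$, so \eqref{lapl} follows upon exponentiation. The delicate point is engineering the self-improving estimate $(C_0+1)X \leq A(C + n\log X)$ at the maximum: this is precisely where the nef and big hypothesis enters, through \eqref{key}, since without the resulting negative coefficient in front of $\tr{\omega}{\omega_0}$ one would recover only an estimate that degenerates along $E_{nK}(\alpha)$.
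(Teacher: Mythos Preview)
Your proof is correct and follows essentially the same approach as the paper: the same auxiliary function $Q=\log\tr{\omega_0}{\omega}-A\varphi+A\psi$, the same parabolic Aubin--Yau inequality, and the same use of \eqref{key} to produce a good negative coefficient in front of $\tr{\omega}{\omega_0}$. The only cosmetic difference is at the maximum point: you first bound $X=\tr{\omega}{\omega_0}$ via the ``linear beats log'' inequality $(C_0+1)X\leq A(C+n\log X)$ and then pass to $\tr{\omega_0}{\omega}$, whereas the paper combines these steps by invoking the boundedness of $x\mapsto x|\log x|^{n-1}$ for small $x$.
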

\begin{proof}
Let
$$Q=\log\tr{\omega_0}{\omega}-A\vp+A\psi,$$
where $A$ is a large constant to be determined soon.
Thanks to Lemma \ref{lem1} (iii), we will be done if we show that $Q\leq C$ on $X\times[0,T)$.
Again, from the definition of $Q$ we see that $Q\leq C$ holds on $X\times [0,T-\delta]$, for a uniform constant $C$.
Fix then any $T-\delta<T'<T$ and suppose that $Q$ achieves its maximum on $X\times[0,T']$ at $(x,t)$ with $t\in [T-\delta, T']$. Then at $(x,t)$ we can
use \eqref{key} to estimate
\[\begin{split}
0\leq \left(\frac{\de}{\de t}-\Delta\right)Q&\leq C\tr{\omega}{\omega_0}+A\log\frac{\Omega}{\omega^n}+An-A\tr{\omega}{(\hat{\omega}_t}+\ddb\psi)\\
&\leq (C-A\ve/2)\tr{\omega}{\omega_0}+A\log\frac{\Omega}{\omega^n}+An,
\end{split}\]
where we used the standard parabolic $C^2$ ``Aubin-Yau'' calculation (see e.g. \cite[Proposition 2.5]{SW}).
Hence if we choose $A$ large so that $C-A\ve/2\leq -1$, we see that at $(x,t)$ we have
$$\tr{\omega}{\omega_0}\leq C\log\frac{\Omega}{\omega^n}+C.$$
Hence at the maximum of $Q$,
$$\tr{\omega_0}{\omega} \le \frac{1}{(n-1)!} (\tr{\omega}{\omega_0})^{n-1} \frac{\omega^n}{\omega_0^n} \le
C\frac{\omega^n}{\Omega} \left(\log \frac{\Omega}{\omega^n}\right)^{n-1}+C\leq C,$$
because we know that $\frac{\omega^n}{\Omega}\leq C$ (Lemma \ref{lem1} (ii)) and $x\mapsto x|\log x|^{n-1}$
is bounded above for $x$ close to zero. Thanks to Lemma \ref{lem3}, this implies that $Q$ is bounded from above at its maximum, and we are done.
\end{proof}

Related estimates can be found in \cite{BG} under slightly different assumptions.

\begin{proof}[Proof of Theorem \ref{main2}]
Fix any compact set $K\subset X\backslash E_{nK}(\alpha)$. Thanks to Lemmas \ref{lem2} (ii) and \ref{c2}, there is a constant $C_K>0$ such that
$$C_K^{-1} \omega_0 \le \omega(t) \le C_K\omega_0 \quad \textrm{on } K \times [0,T).$$
Applying the local higher order estimates of Sherman-Weinkove \cite{ShW}, we get uniform $C^{\infty}$ estimates for $\omega(t)$ on compact subsets of $X\backslash E_{nK}(\alpha)$.  This implies that
given every compact set $K$ there exists a constant $C_K$ such that
$$\ddt{} \omega = - \Ric(\omega) \le C_K \omega,$$
which implies that $e^{-C_Kt} \omega(t)$ is decreasing in $t$ as well as being bounded from below.  This implies that $\omega(t)$ has a limit as $t \rightarrow T$, and since we have uniform estimates away from $E_{nK}(\alpha)$, we see that $\omega(t)$ converges in $C^{\infty}$ on compact sets to a smooth K\"ahler metric $\omega_{T}$ on $X\backslash E_{nK}(\alpha)$.
Furthermore, by weak compactness of currents, it follows easily that $\omega_T$ extends as a closed positive current on $X$ and $\omega(t)$ converges to $\omega_T$ as currents on $X$.

Finally we show that the limiting current $\omega_T$ on $X$ has minimal singularities (which implies for example that it has vanishing Lelong numbers \cite{Bou}). This argument is an adaptation of a similar one in \cite{Z1}. If we let $\eta$ be any $\alpha$-PSH function, we need to show that there is a constant $C$ (which depends on $\eta$), such that
\begin{equation}\label{goalll}
\vp\geq \eta -C,
\end{equation}
holds on $X\times [0,T)$.
Using Demailly regularization, we obtain functions $\eta_\ve$, with analytic singularities, with $\alpha+\ddb\eta_\ve\geq -\ve\omega_0$, which decrease to $\eta$. Let
$$Q=\left((\vp+(T-t)\dot{\vp}+nt)+\ve(\vp-t\dot{\vp}+nt)-\eta_\ve\right),$$
and, wherever $Q$ is smooth, calculate
\[\begin{split}
\left(\frac{\de}{\de t}-\Delta\right)Q=\tr{\omega}{(\alpha+\ve\omega_0+\ddb\eta_\ve)}\geq 0.
\end{split}\]
Note that for each time $t$ the quantity $Q$ is bounded below (and smooth at a point where its minimum is achieved), hence the minimum principle (together with the fact that $\eta_\ve\leq C$ independent of $\ve$) implies that $Q\geq -C$, where the constant $C$ does not depend on $\ve$,
or in other words
$$(1+\ve)\vp+(T-t-\ve t)\dot{\vp}\geq \eta_\ve-C\geq \eta-C.$$
Letting $\ve\to 0$ and using that $\dot{\vp}\leq C$, we obtain \eqref{goalll}.
\end{proof}

We now remark that the set of points where the K\"ahler-Ricci flow develops singularities can also be characterized using the curvature of the evolving metrics. We are grateful to Zhou Zhang for pointing out this result to us.

Following Enders-M\"uller-Topping \cite{EMT}, we will call $\Sigma$ the complement of the set of points $x\in X$ such that there exists an open neighborhood $U$ of $x$ and a constant $C>0$ with
$|\mathrm{Rm}(t)|_{g(t)}\leq C$ on $U\times [0,T)$.
Also, call $\Sigma'$ the complement of the set of points $x\in X$ such that there exists an open neighborhood $U$ of $x$ and a constant $C>0$ with
$R(t)\leq C$ on $U\times [0,T)$, where $R(t)$ is the scalar curvature of $\omega(t)$.

\begin{thm}\label{singul}
Let the setup be the same as in Theorem \ref{finitet}.
Then
$$\Null(\alpha)=\Sigma=\Sigma'.$$
\end{thm}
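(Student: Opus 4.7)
The plan is to establish the three-way equality by proving the chain of inclusions $\Sigma' \subseteq \Sigma \subseteq \Null(\alpha) \subseteq \Sigma'$. The first is immediate, since a pointwise bound on the Riemann tensor controls the scalar curvature. For $\Sigma \subseteq \Null(\alpha)$, suppose $x \notin \Null(\alpha)$; by Theorem~\ref{main} we have $x \notin E_{nK}(\alpha)$, so Theorem~\ref{main2} produces $C^\infty_{\mathrm{loc}}$-convergence of $\omega(t)$ to a smooth K\"ahler metric on a neighborhood of $x$, whence $|\mathrm{Rm}(\omega(t))|_{\omega(t)}$ is locally uniformly bounded up to time $T$ and $x \notin \Sigma$.

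The main content is $\Null(\alpha) \subseteq \Sigma'$, which I would argue by contrapositive: assuming $R(t) \le C$ on $U \times [0,T)$ for a neighborhood $U$ of $x$, I show $x \notin \Null(\alpha)$. The starting identity is $\frac{\partial}{\partial t}\omega^n = -R\omega^n$ along the K\"ahler-Ricci flow, which combined with $R \le C$ yields the uniform volume-form lower bound $\omega(t)^n \ge e^{-CT}\omega_0^n$ on $U \times [0,T)$; equivalently $\dot\vp = \log(\omega^n/\Omega) \ge -C$ there. Combined with the universal upper bound $\dot\vp \le C$ from Lemma~\ref{lem1}(i), $\dot\vp$ is uniformly bounded on $U \times [0,T)$, and integrating in time then gives $|\vp| \le CT$ on $U \times [0,T)$.

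With these uniform $L^\infty$ bounds on $\vp$ and $\dot\vp$ I would next upgrade to a two-sided metric equivalence $c\omega_0 \le \omega(t) \le C\omega_0$ on a smaller neighborhood $U' \Subset U$ of $x$. The strategy is to re-run the maximum-principle argument of Lemma~\ref{c2} on $U' \times [0,T)$, using the now-bounded $\vp$ as a barrier in place of the K\"ahler-current potential $\psi$ (which we do not control on $U$), together with a smooth cutoff function to confine the relevant maximum to an interior point of $U'$. Once this metric equivalence is in hand, the volume-form lower bound forces the limit $\omega_T$ to be a smooth K\"ahler metric on $U'$, and the Sherman-Weinkove local higher-order estimates \cite{ShW} give smooth convergence $\omega(t) \to \omega_T$ near $x$. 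A Richberg-type regularized-maximum gluing of this smooth local K\"ahler metric with a K\"ahler current in $[\alpha]$ having analytic singularities along $E_{nK}(\alpha)$ from Theorem~\ref{usefull}, exactly as in the proof of Lemma~\ref{isolated}, then produces a global K\"ahler current in $[\alpha]$ smooth at $x$, so $x \in \amp(\alpha) = X \setminus E_{nK}(\alpha) = X \setminus \Null(\alpha)$ by Proposition~\ref{nkamp} and Theorem~\ref{main}.

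The main obstacle is precisely this upgrade step: $\vp$, unlike $\psi$, does not satisfy a pointwise inequality of the form $\ddb\vp \ge \varepsilon\omega_0 - \alpha$, so the naive substitution in Lemma~\ref{c2} only produces the tautology $\mathrm{tr}_\omega\omega = n$ at the maximum. A careful choice of cutoff function and of the subset $U'$ will be needed to extract a genuine bound on $\mathrm{tr}_{\omega_0}\omega$ and to control the resulting boundary terms in the maximum principle.
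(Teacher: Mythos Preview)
Your chain of inclusions and the derivation of $|\dot\vp|+|\vp|\leq C$ on $U\times[0,T)$ are correct and match the paper. The genuine gap is exactly where you flag it: the upgrade to a two-sided metric bound. Your proposed fix---localizing Lemma~\ref{c2} with $\vp$ in place of $\psi$ and a cutoff---does not work, and not only for the tautology reason you note. A cutoff $\theta$ inserted into $Q$ contributes a term $\ddb\theta$ that must be absorbed by $\tr{\omega}{\omega_0}$, but you have no a priori control on this trace on $\partial U'$; the boundary terms and the interior ``good'' term are of the same order, so the argument cannot close.

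The paper sidesteps local arguments entirely. The key is to work with the \emph{opposite} trace $\tr{\omega}{\omega_0}$ (Schwarz-lemma direction, not the Aubin--Yau direction of Lemma~\ref{c2}) and to use the globally defined auxiliary function $H=t\dot\vp-\vp-nt$, which satisfies the exact identity
\[
\left(\frac{\partial}{\partial t}-\Delta\right)H=-\tr{\omega}{\omega_0}.
\]
Combined with the Schwarz-type inequality $\bigl(\tfrac{\partial}{\partial t}-\Delta\bigr)\log\tr{\omega}{\omega_0}\leq C\,\tr{\omega}{\omega_0}$, the quantity $Q=\log\tr{\omega}{\omega_0}+AH$ is a subsolution of the heat equation on \emph{all} of $X\times[0,T)$, so its maximum is at $t=0$. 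This yields the global bound $\tr{\omega}{\omega_0}\leq Ce^{-At\dot\vp+A\vp}$; restricting to $U$, where $\dot\vp$ and $\vp$ are bounded, gives $\tr{\omega}{\omega_0}\leq C$ there, and the volume upper bound then gives $\tr{\omega_0}{\omega}\leq C$. No cutoff or boundary analysis is needed.

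Your final Richberg-gluing step is unnecessary and, as written, delicate: the K\"ahler current $R$ from Theorem~\ref{usefull} is singular along $E_{nK}(\alpha)$, which may well meet $\partial U'$, so the situation is not that of Lemma~\ref{isolated}. Once smooth convergence $\omega(t)\to\omega_{U'}$ on $U'$ is established, the volume argument from the proof of Theorem~\ref{finitet} immediately gives $x\notin\Null(\alpha)$: for any irreducible $V\ni x$, $\int_V\alpha^{\dim V}=\lim_{t\to T}\int_V\omega(t)^{\dim V}\geq\int_{V\cap U'}\omega_{U'}^{\dim V}>0$.
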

\begin{proof}
The inclusion $\Sigma'\subset\Sigma$ is trivial, and thanks to Theorem \ref{main2} we have the inclusion
$\Sigma\subset\Null(\alpha).$
It then suffices to prove that if $R(t)\leq C$ on $U\times [0,T)$, then on a smaller open neighborhood $U'$ of $x$ we have that
$\omega(t)$ converge smoothly to a K\"ahler metric $\omega_{U'}$ as $t\to T^-$.

The proof is contained in the work of Zhang \cite{Z1, Z3} (see also the exposition \cite[Theorem 7.6]{SW}).
In general, $R\geq -C$ along the flow.
From the flow equation
$$\dot{\vp}=\log\frac{\omega^n}{\Omega},\quad \frac{\de}{\de t}\dot{\vp}=-R,$$
so $\frac{\de}{\de t}\dot{\vp}\leq C$ on $M\times[0,T)$. Integrating from $0$ to $T$ we get $\dot{\vp}\leq C$ and $\vp\leq C$ on $M\times[0,T)$.
Furthermore, on $U\times[0,T)$ we have $\left|\frac{\de}{\de t}\dot{\vp}\right|\leq C$ and hence $|\dot{\vp}|+|\vp|\leq C$. The quantity $H=t\dot{\vp}-\vp-nt$ is therefore bounded on $U\times[0,T),$ and satisfies
$$\left(\frac{\de}{\de t}-\Delta\right)H=-\tr{\omega}{\omega_0}.$$
On the other hand a ``Schwarz-Lemma''-type calculation (see for example \cite{ST, SW}) gives
$$\left(\frac{\de}{\de t}-\Delta\right)\log\tr{\omega}{\omega_0}\leq C\tr{\omega}{\omega_0}.$$
Therefore if we let $Q=\log\tr{\omega}{\omega_0}+AH$, then
$$\left(\frac{\de}{\de t}-\Delta\right)Q\leq-\tr{\omega}{\omega_0}\leq 0,$$
provided $A$ is large enough. Therefore given any $T'<T$, the maximum of $Q$ on $M\times[0,T']$ is achieved at $t=0$, and hence
$$\log\tr{\omega}{\omega_0}+A(t\dot{\vp}-\vp-nt)\leq C.$$
This means that on $M\times[0,T)$ we have
$$\tr{\omega}{\omega_0}\leq Ce^{-At\dot{\vp}+A\vp+Ant}\leq Ce^{-At\dot{\vp}}.$$
But $\omega^n/\Omega\leq C$ thanks to Lemma \ref{lem1} (ii), and so
$$\tr{\omega_0}{\omega}\leq C(\tr{\omega}{\omega_0})^{n-1} \frac{\omega^n}{\Omega}\leq Ce^{-(n-1)At\dot{\vp}}.$$
On $U\times[0,T)$ we therefore have $C^{-1}\omega_0\leq \omega\leq C\omega_0$. The local higher order estimates of Sherman-Weinkove \cite{ShW},
then give us $C^\infty$ estimates for $\omega$ on $U'\times[0,T)$ for a slightly smaller open neighborhood $U'$ of $x$, and from here we conclude as in the proof of Theorem \ref{main2}.
\end{proof}

\section{Degenerations of Ricci-flat K\"ahler metrics}\label{sectcy}
In this section we study degenerations of Ricci-flat K\"ahler metrics on Calabi-Yau manifolds, and prove Theorem \ref{main1}.

Let $X$ be a Calabi-Yau manifold, which is a compact K\"ahler manifold with $c_1(X)=0$ in $H^2(X,\mathbb{R})$.
Let $\mathcal{K}\subset H^{1,1}(X,\mathbb{R})$ be the K\"ahler cone of $X$. Then Yau's theorem \cite{Y} says that for every K\"ahler class $[\alpha]\in\mathcal{K}$
there exists a unique K\"ahler metric $\omega\in[\alpha]$ with $\Ric(\omega)=0$.
Let $\alpha$ be a smooth closed real $(1,1)$ form on $X$ with $[\alpha]\in \de\mathcal{K}$ nef and $\int_X\alpha^n>0$.
Let $[\alpha_t]:[0,1]\to\overline{\mathcal{K}}$ be a continuous path of $(1,1)$ classes with $[\alpha_t]\in \mathcal{K}$ for $t>0$ and with $[\alpha_0]=[\alpha]$.
For $0<t\leq 1$ write $\omega_t$ for the unique Ricci-flat K\"ahler metric in the class $[\alpha_t]$.

\begin{proof}[Proof of Theorem \ref{main1}] Throughout this proof we fix $\omega$ a Ricci-flat K\"ahler metric on $X$.
To start, let us see that we can choose a continuously varying path of reference $(1,1)$ forms $\alpha_t$ cohomologous to $[\alpha_t]$ with $\alpha_0=\alpha$.
Indeed, it is easy to find $(1,1)$ forms $\ti{\alpha}_t$ representing the classes $[\alpha_t]$ and varying continuously in $t$.
Since $\ti{\alpha}_0$ and $\alpha$ are cohomologous, we may write $\ti{\alpha}_0=\alpha+\ddb \eta$, and then let $\alpha_t=\ti{\alpha}_t-\ddb\eta$.
In general the forms $\alpha_t$ do not satisfy any positivity property.
Recall the following construction from \cite{BEGZ}: for any $x\in X$ let
$$V_{\alpha}(x)=\sup\{\vp(x)\ |\ \vp \textrm{ is }\alpha\textrm{-PSH}, \sup_X\vp\leq 0\}$$
be the extremal function associated to the form $\alpha$. Then $\alpha+\ddb V_\alpha\geq 0$ and $V_\alpha$ has minimal singularities among all $\alpha$-PSH functions.
Similarly, for $0\leq t\leq 1$ we let $V_t=V_{\alpha_t}$, so that $V_0=V_\alpha$.
Then the function $V_\alpha$ is continuous on $X\backslash E_{nK}(\alpha)$, while if $t>0$ then $V_t$ is continuous everywhere on $X$.

Using the theorem of Demailly-P\u{a}un together with Demailly's regularization, we see that there exists an $\alpha$-PSH function $\psi$ with analytic singularities, with $\sup_X\psi=0$ and with
$\alpha+\ddb\psi\geq \delta \omega$ on $X$, for some $\delta>0$. Furthermore, thanks to Theorem \ref{usefull}, the function $\psi$ can be chosen to be smooth on $X\backslash E_{nK}(\alpha)$
(and singular on $E_{nK}(\alpha)$). In general $V_\alpha$ has strictly weaker singularities than $\psi$, i.e. it need not be singular on $E_{nK}(\alpha)$. For example
if $Y$ is a compact K\"ahler surface, $\pi:X\to Y$ is the blowup of a point, $E\subset X$ is the exceptional divisor, and $\alpha=\pi^*\omega$ for a K\"ahler metric $\omega$ on $Y$,
then $V_\alpha$ vanishes identically while $\psi$ is singular along $E$.

Note that for $t$ sufficiently small, we will have
\begin{equation}\label{est4}
\alpha_t+\ddb\psi = \alpha+\ddb\psi+(\alpha_t-\alpha)\geq \delta\omega/2.
\end{equation}
Therefore, from the definition of $V_t$, we have that
\begin{equation}\label{est3}
V_t\geq \psi,
\end{equation}
holds on $X$ for all $t$ sufficiently small.

For $t>0$ we can write $\omega_t=\alpha_t+\ddb\vp_t$, with $\sup_X\vp_t=0$. Then the functions $\vp_t$ solve the complex Monge-Amp\`ere equation
\begin{equation}\label{ma}
\omega_t^n=(\alpha_t+\ddb\vp_t)^n=c_t \omega^n,
\end{equation}
where the constant $c_t$ equals $\int_X \alpha_t^n/\int_X\omega^n$ (and is therefore bounded uniformly away from zero and infinity). The crucial result is then the following estimate of Boucksom-Eyssidieux-Guedj-Zeriahi
\cite[Theorem 4.1, Remark 4.5]{BEGZ}
\begin{equation}\label{est}
\vp_t\geq V_t-C,
\end{equation}
for a uniform constant $C$ independent of $t$. This is generalization of a seminal result of Ko\l odziej \cite{Ko}.
Thanks to \eqref{est3}, we conclude that
\begin{equation}\label{est2}
\vp_t\geq \psi-C,
\end{equation}
for all $t$ sufficiently small.
Furthermore \cite[Theorem 4.1]{BEGZ} also
gives us a unique $\alpha$-PSH function $\vp_0$ with $\sup_X\vp_0=0$ solving
$$\langle\alpha+\ddb\vp_0\rangle^n=c_0 \omega^n,$$
on $X\backslash E_{nK}(\alpha)$, and with $\vp_0\geq V_\alpha-C$ (here $\langle \cdot\rangle^n$ is the non-pluripolar product defined in \cite{BEGZ}). Now the same simple argument as \cite[Lemma 5.3]{BEGZ} shows that as $t\to 0$ the functions $\vp_t$ converge to $\vp_0$ in $L^1(X)$. To get higher order estimates of $\vp_t$ on $X\backslash E_{nK}(\alpha)$ we proceed as follows.
Let $\Delta_t$ be the Laplacian of $\omega_t$, $A$ a large constant, and $Q_t=\log \tr{\omega}{\omega_t} -A\vp_t+A\psi$. A standard calculation (see e.g. \cite{Y}) gives
$$\Delta_tQ_t\geq -C\tr{\omega_t}{\omega} -C -An+A\tr{\omega_t}{(\alpha_t+\ddb\psi)}.$$
We can assume that $t$ is sufficiently small so that \eqref{est4} holds, and choose $A$ sufficiently large so that
$$\Delta_tQ_t\geq \tr{\omega_t}{\omega}-C.$$
Note that for any $t>0$ the function $Q_t$ approaches $-\infty$ along $E_{nK}(\alpha)$, so its maximum is achieved
on its complement. For any sufficiently small $t$, let $x\in X\backslash E_{nK}(\alpha)$ be a point where the maximum of $Q_t$ is achieved.
Then $\tr{\omega_t}{\omega}(x)\leq C$, and using \eqref{ma} we see that $\tr{\omega}{\omega_t}(x)\leq C$ and
$$Q_t\leq \log C -A\vp_t(x)+A\psi(x)\leq C,$$
thanks to \eqref{est2}. Therefore $Q_t\leq C$ holds on $X$ for all small $t>0$. Since $\sup_X\vp_t=0$, we have proved that
$$\tr{\omega}{\omega_t}\leq Ce^{-A\psi}.$$
This implies that $\omega_t$ is uniformly equivalent to $\omega$ on any compact set $K\subset X\backslash E_{nK}(\alpha)$, independent of $t$.
The higher order estimates (either the Calabi $C^3$ estimate \cite{Y} or the Evans-Krylov theory \cite{Si}) are local, and so we get uniform $C^\infty_{\mathrm{loc}}$ estimates for $\vp_t$ on $X\backslash E_{nK}(\alpha)$. It follows that
as $t\to 0$ the Ricci-flat metrics $\omega_t$ converge to $\omega_0=\alpha+\ddb\vp_0$ in $C^\infty_{\mathrm{loc}}(X\backslash E_{nK}(\alpha))$.
Note that this gives a slightly simpler proof of the fact that $\vp_0\in C^\infty(X\backslash E_{nK}(\alpha))$ than the one given in \cite[Theorem 4.1]{BEGZ}

Next, the metrics $\omega_t$ have a uniform upper bound on their diameter thanks to \cite{deg, ZT}. Once we show the Gromov-Hausdorff convergence statement, it will also follow that $(X\backslash E_{nK}(\alpha),\omega_0)$ has finite diameter, and hence is incomplete. The Gromov-Hausdorff convergence is proved exactly along the lines of \cite[Lemma 5.1]{RZ}, who dealt with the case when $X$ is projective and $[\alpha]$ is rational. The key point is that thanks to our main Theorem \ref{main}, $E_{nK}(\alpha)=\Null(\alpha)$, so the exact same argument as \cite[Lemma 5.1]{RZ} applies, with the subvariety $E$ there replaced by $\Null(\alpha)$.
\end{proof}
Finally, let us remark that the same proof as in Theorem \ref{contain} or Theorem \ref{finitet} shows that the metrics $\omega_t$
cannot converge smoothly to a K\"ahler metric near any point of $\Null(\alpha)$.


\begin{thebibliography}{99}

\bibitem{BHPV} W.P. Barth, K. Hulek, C.A.M. Peters, A. Van de Ven {\em Compact complex surfaces. Second edition}, Springer, 2004.
\bibitem{BD} R.J. Berman, J.-P. Demailly {\em Regularity of plurisubharmonic upper envelopes in big cohomology classes}, in {\em Perspectives in analysis, geometry, and topology,} 39--66, Progr. Math., 296, Birkh\"auser/Springer, New York, 2012.
\bibitem{BoT} S. Boucksom \emph{C\^{o}nes positifs des vari\'et\'es complexes compactes}, Ph.D. Thesis, Institut Fourier Grenoble, 2002.
\bibitem{Bo} S. Boucksom {\em On the volume of a line bundle}, Internat. J. Math. {\bf 13} (2002), no. 10, 1043--1063.
\bibitem{Bou} S. Boucksom {\em Divisorial Zariski decompositions on compact complex manifolds}, Ann. Sci. \'Ecole Norm. Sup. (4) {\bf 37} (2004), no. 1, 45--76.
\bibitem{BDPP} S. Boucksom, J.-P. Demailly, M. P\u{a}un, T. Peternell {\em The pseudo-effective cone of a compact K\"ahler manifold and varieties of negative Kodaira dimension}, J. Algebraic Geom. {\bf 22} (2013), no. 2, 201--248.
\bibitem{BEGZ} S. Boucksom, P. Eyssidieux, V. Guedj, A. Zeriahi \emph{ Monge-Amp\`ere equations in big cohomology classes}, Acta Math. {\bf 205} (2010), no. 2, 199--262.
\bibitem{BG} S. Boucksom, V. Guedj {\em Regularizing properties of the K\"ahler-Ricci flow}, in {\em An introduction to the K\"ahler-Ricci flow},  189--237, Lecture Notes in Math., 2086, Springer, Cham, 2013.
\bibitem{CL} S. Cacciola, A.F. Lopez {\em Nakamaye's theorem on log canonical pairs}, Ann. Inst. Fourier (Grenoble) {\bf 64} (2014), no.6, 2283--2298.
\bibitem{CZ} S. Cantat, A. Zeghib {\em Holomorphic actions, Kummer examples, and Zimmer program},  Ann. Sci. \'Ecole Norm. Sup. (4) {\bf 45} (2012), no. 3, 447--489.
\bibitem{Ca} H.-D. Cao {\em Deformation of K\"ahler metrics to K\"ahler-Einstein metrics on compact K\"ahler manifolds}, Invent. Math. {\bf 81}  (1985), no. 2, 359--372.
\bibitem{CMM} P. Cascini, J. McKernan, M. Musta\c{t}\u{a} {\em The augmented base locus in positive characteristic}, Proc. Edinb. Math. Soc. (2) {\bf 57} (2014), no. 1, 79--87.
\bibitem{CGP} T.C. Collins, A. Greenleaf, M. Pramanik {\em A multi-dimensional resolution of singularities with applications to analysis},  Amer. J. Math. {\bf 135} (2013), no. 5, 1179--1252.
\bibitem{CT} T.C. Collins, V. Tosatti {\em An extension theorem for K\"ahler currents with analytic singularities}, Ann. Fac. Sci. Toulouse Math. {\bf 23} (2014), no.4, 893--905.
\bibitem{Co} M. Coltoiu \emph{Traces of Runge domains on analytic subsets}, Math. Ann. {\bf 290} (1991), 545--548.
\bibitem{CGZ} D. Coman, V. Guedj, A. Zeriahi {\em Extension of plurisubharmonic functions with growth control}, J. Reine Angew. Math. {\bf 676} (2013), 33--49.
\bibitem{Dem92b} J.-P. Demailly \emph{Singular Hermitian metrics on positive line bundles}, in {\em Complex algebraic varieties (Bayreuth, 1990),} 87--104, Lecture Notes in Math., 1507, Springer, Berlin, 1992.
\bibitem{Dem92} J.-P. Demailly \emph{Regularization of closed positive currents and intersection theory}, J. Algebraic Geom. {\bf 1} (1992), no. 3, 361--409.
\bibitem{Demb} J.-P. Demailly {\em Complex Analytic and Differential Geometry}, available on the author's webpage.
\bibitem{D6} J.-P. Demailly, S. Dinew, V. Guedj, P.H. Hiep, S. Ko\l odziej, A. Zeriahi {\em H\"older continuous solutions to Monge-Amp\`ere equations},  J. Eur. Math. Soc. (JEMS) {\bf 16} (2014), no. 4, 619--647.
\bibitem{DP} J.-P. Demailly, M. P\u{a}un \emph{Numerical characterization of the K\"ahler cone of a compact K\"ahler manifold}, Ann. of Math., {\bf 159} (2004), no. 3, 1247--1274.
\bibitem{ELMNP2} L. Ein, R. Lazarsfeld, M. Musta\c{t}\u{a}, M. Nakamaye, M. Popa {\em Asymptotic invariants of base loci}, Ann. Inst. Fourier (Grenoble) {\bf 56} (2006), no. 6, 1701-?734.
\bibitem{ELMNP} L. Ein, R. Lazarsfeld, M. Musta\c{t}\u{a}, M. Nakamaye, M. Popa {\em Restricted volumes and base loci of linear series}, Amer. J. Math. {\bf 131} (2009), no. 3, 607--651.
\bibitem{EMT}  J. Enders, R., M\"uller, P.M., Topping {\em On type-I singularities in Ricci flow}, Comm. Anal. Geom. {\bf 19} (2011), no. 5, 905--922.
\bibitem{EGZ} P. Eyssidieux, V. Guedj, A. Zeriahi {\em Singular K\"ahler-Einstein metrics}, J. Amer. Math. Soc. {\bf 22} (2009), 607--639.
\bibitem{FIK} M. Feldman, T. Ilmanen, D. Knopf {\em Rotationally symmetric shrinking and expanding gradient K\"ahler-Ricci solitons},  J. Differential Geom. {\bf 65}  (2003),  no. 2, 169--209.
\bibitem{Fu} A. Fujiki {\em Closedness of the Douady spaces of compact K\"ahler spaces}, Publ. Res. Inst. Math. Sci. {\bf 14} (1978/79), no. 1, 1--52.
\bibitem{Hi} H. Hironaka {\em Bimeromorphic smoothing of a complex-analytic space}, Acta Math. Vietnam. {\bf 2} (1977), no.2, 103--168.
\bibitem{His} T. Hisamoto {\em Remarks on $L^{2}$-jet extension and extension of singular Hermitian metric with semi positive curvature}, arXiv:1205.1953.
\bibitem{Ko} S. Ko\l odziej {\em The complex Monge-Amp\`ere equation}, Acta Math. {\bf 180} (1998), 69--117.
\bibitem{LT} G. La Nave, G. Tian {\em Soliton-type metrics and K\"ahler-Ricci flow on symplectic quotients}, to appear in J. reine angew. Math.
\bibitem{Laz} R. Lazarsfeld {\em Positivity in algebraic geometry I \& II}, Springer-Verlag, Berlin, 2004.
\bibitem{Nak} M. Nakamaye \emph{Stable base loci of linear series}, Math. Ann., {\bf 318} (2000), no. 4, 837--847.
\bibitem{Paun} M. P\u{a}un {\em Sur l'effectivit\'e num\'erique des images inverses de fibr\'es en droites}, Math. Ann. {\bf 310} (1998), no. 3, 411--421.
\bibitem{PSS} D.H. Phong, E.M. Stein, J. Sturm  {\em On the growth and stability of real-analytic functions}, Amer. J. Math. {\bf 121} (1999), no. 3, 519--554.
\bibitem{PS} D.H. Phong, J. Sturm {\em Algebraic estimates, stability of local zeta functions, and uniform estimates for distribution functions}, Ann. of Math. (2) {\bf 152} (2000), no. 1, 277--329.
\bibitem{PS2} D.H. Phong, J. Sturm {\em  On the algebraic constructibility of varieties of integrable rational functions on $\mathbb{C}^n$}, Math. Ann. {\bf 323} (2002), no. 3, 453--484.
\bibitem{PS3} D.H. Phong, J. Sturm {\em On the singularities of the pluricomplex Green's function}, in {\em Advances in analysis. The legacy of Elias M. Stein}, 419--435, Princeton University Press, 2014.
\bibitem{Rich} R. Richberg {\em Stetige streng pseudokonvexe Funktionen}, Math. Ann. {\bf 175} (1968), 257--286.
\bibitem{RZ} X. Rong, Y. Zhang {\em Continuity of extremal transitions and flops for Calabi-Yau manifolds}, J.  Differential Geom. {\bf 89} (2011), no. 2, 233--269.
\bibitem{RZ2} X. Rong, Y. Zhang {\em Degenerations of Ricci-flat Calabi-Yau manifolds},  Commun. Contemp. Math. {\bf 15} (2013), no. 4, 1250057, 8 pp.
\bibitem{Sch} G. Schumacher {\em Asymptotics of K\"ahler-Einstein metrics on quasi-projective manifolds and an extension theorem on holomorphic maps}, Math. Ann. {\bf 311} (1998), no. 4, 631--645.
\bibitem{ShW} M. Sherman, B. Weinkove {\em Interior derivative estimates for the K\"ahler-Ricci flow}, Pacific J. Math. {\bf 257} (2012), no. 2, 491--501.
\bibitem{Siu74} Y.-T. Siu {\em Analyticity of sets associated to Lelong numbers and the extension of closed positive currents}, Invent. Math. {\bf 27} (1974), 53--156.
\bibitem{Si} Y.-T. Siu {\em Lectures on Hermitian-Einstein metrics for stable bundles and K\"ahler-Einstein metrics}, DMV Seminar, 8. Birkh\"auser Verlag, Basel, 1987.
\bibitem{So} J. Song {\em Finite time extinction of the K\"ahler-Ricci flow}, arXiv:0905.0939.
\bibitem{So2} J. Song {\em Ricci flow and birational surgery}, arXiv:1304.2607.
\bibitem{SSW} J. Song, G. Sz\'ekelyhidi, B. Weinkove {\em The K\"ahler-Ricci flow on projective bundles}, Int. Math. Res. Not. {\bf 2013}, no. 2, 243--257.
\bibitem{ST} J. Song, G. Tian {\em The K\"ahler-Ricci flow on surfaces of positive Kodaira dimension}, Invent. Math. {\bf 170} (2007), no. 3, 609--653.
\bibitem{SW} J. Song, B. Weinkove {\em An introduction to the K\"ahler-Ricci flow}, in {\em An introduction to the K\"ahler-Ricci flow}, 89--188, Lecture Notes in Math., 2086, Springer, Cham, 2013.
\bibitem{SW2} J. Song, B. Weinkove {\em Contracting exceptional divisors by the K\"ahler-Ricci flow}, Duke Math. J. {\bf 162} (2013), no. 2, 367--415.
\bibitem{T1} G. Tian {\em New results and problems on K\"ahler-Ricci flow}, in {\em G\'eom\'etrie diff\'erentielle, physique math\'ematique, math\'ematiques et soci\'et\'e. II}, Ast\'erisque No. {\bf 322} (2008), 71--92.
\bibitem{T2} G. Tian {\em  Finite-time singularity of K\"ahler-Ricci flow}, Discrete Contin. Dyn. Syst. {\bf 28} (2010), no. 3, 1137--1150.
\bibitem{TZ} G. Tian, Z. Zhang {\em On the K\"ahler-Ricci flow on projective manifolds of general type}, Chinese Ann. Math. Ser. B {\bf 27} (2006), no. 2, 179--192.
\bibitem{deg} V. Tosatti \emph{Limits of Calabi-Yau metrics when the K\"ahler class degenerates}, J. Eur. Math. Soc. (JEMS) {\bf 11} (2009), no.4, 755-776.
\bibitem{surv} V. Tosatti {\em Degenerations of Calabi-Yau metrics}, in {\em Geometry and Physics in Cracow}, Acta Phys. Polon. B Proc. Suppl. {\bf 4} (2011), no.3, 495--505.
\bibitem{nyac} V. Tosatti {\em Calabi-Yau manifolds and their degenerations}, Ann. N.Y. Acad. Sci. {\bf 1260} (2012), 8--13.
\bibitem{Va} J. Varouchas {\em K\"ahler spaces and proper open morphisms},  Math. Ann. {\bf 283} (1989), no. 1, 13--52.
\bibitem{Vo} C. Voisin {\em Hodge theory and complex algebraic geometry. I}, Cambridge University Press, 2007.
\bibitem{Wl} J. W\l odarczyk {\em Resolution of singularities of analytic spaces}, in {\em Proceedings of G\"okova Geometry-Topology Conference 2008}, 31--63, International Press, 2009.
\bibitem{Wu} D. Wu {\em Higher canonical asymptotics of K\"ahler-Einstein metrics on quasi-projective manifolds}, Comm. Anal. Geom. {\bf 14} (2006), no. 4, 795--845.
\bibitem{Y} S.-T. Yau  {\em On the Ricci curvature of a compact K\"ahler manifold and the complex Monge-Amp\`ere equation, I}, Comm. Pure Appl. Math. {\bf 31} (1978), no.3, 339--411.
\bibitem{ZT} Y. Zhang {\em Convergence of K\"ahler manifolds and calibrated fibrations}, Ph.D. thesis, Nankai Institute of Mathematics, 2006.
\bibitem{Z3} Z. Zhang {\em Scalar curvature behavior for finite-time singularity of K\"ahler-Ricci flow}, Michigan Math. J. {\bf 59} (2010), no. 2, 419--433.
\bibitem{Z1} Z. Zhang {\em General weak limit for K\"ahler-Ricci flow}, arXiv:1104.2961.
\bibitem{Z2} Z. Zhang {\em Ricci lower bound for K\"ahler-Ricci flow},  Commun. Contemp. Math. {\bf 16} (2014), no. 2, 1350053, 11 pp.
\end{thebibliography}
\end{document}